\patchcmd{\thmhead}{(#3)}{#3}{}{}
\newtheorem{theorem}{Theorem}[section]
\newtheorem{lemma}[theorem]{Lemma}
\newtheorem{corollary}[theorem]{Corollary}
\newtheorem{proposition}[theorem]{Proposition}
\newtheorem{remark}[theorem]{Remark}
\DeclareMathAlphabet{\mathpzc}{OT1}{pzc}{m}{it}
\newcommand{\R}{\mathbb{R}}
\newcommand{\N}{\mathbb{N}}
\def\supp{{\rm supp}}
\def\N{{\mathbb{N}}}
\def\R{{\mathbb{R}}}
\def\0{{\rm \bf{0}}}
\def\B{{\mathcal{B}}}
\def\q{{\vec{q}}}
\def\qm{\bm{q}}
\def\B2star{\overline{B}_X^{w(X^{\ast\ast},X^{\ast})}}
\title{Multilinear Marcinkiewicz-Zygmund inequalities}
\author{Daniel Carando %
\and Martin Mazzitelli %
\and Sheldy Ombrosi}
\thanks{This project was supported in part by CONICET PIP 11220130100329CO, ANPCyT PICT 2015-2299 and UBACyT 20020130100474. The second author has a postdoctoral position from CONICET}
\address{Departamento de Matem\'{a}tica - Pab I,
Facultad de Cs. Exactas y Naturales, Universidad de Buenos Aires,
(1428) Buenos Aires, Argentina and IMAS-CONICET}
\email{dcarando@dm.uba.ar}
\address{Instituto Balseiro,
Universidad Nacional de Cuyo - C.N.E.A. and Departamento de Matem\'atica, Centro Regional Universitario Bariloche, Universidad Nacional del Comahue, (8400) San Carlos de Bariloche, Argentina}
\email{martin.mazzitelli@crub.uncoma.edu.ar}
\address{Departamento de Matem\'{a}tica,
Universidad Nacional del Sur,
(8000) Bah\'ia Blanca, Argentina and INMABB-CONICET}
\email{sombrosi@uns.edu.ar}
\keywords{Vector-valued inequalities, multilinear operators, Calder\'on-Zygmund operators}
\subjclass[2010] {Primary: 47H60, 47A63. Secondary: 42B20}
\date{}
\begin{document}
\baselineskip=.65cm

\begin{abstract}
We extend to the multilinear setting classical inequalities of Marcinkiewicz and Zygmund on $\ell^r$-valued extensions of linear operators. We show that for certain $1 \leq p, q_1, \dots, q_m, r \leq \infty$, there is a constant $C\geq 0$ such that for every bounded multilinear operator $T\colon L^{q_1}(\mu_1) \times \cdots \times L^{q_m}(\mu_m) \to L^p(\nu)$ and  functions $\{f_{k_1}^1\}_{k_1=1}^{n_1} \subset L^{q_1}(\mu_1), \dots, \{f_{k_m}^m\}_{k_m=1}^{n_m} \subset L^{q_m}(\mu_m)$, the following inequality holds
\begin{equation}\label{MZ ineq abstract}
\left\Vert \left(\sum_{k_1, \dots, k_m} |T(f_{k_1}^1, \dots, f_{k_m}^m)|^r\right)^{1/r} \right\Vert_{L^p(\nu)} \leq C \|T\| \prod_{i=1}^m \left\| \left(\sum_{k_i=1}^{n_i} |f_{k_i}^i|^r\right)^{1/r} \right\|_{L^{q_i}(\mu_i)}.
\end{equation}
In some cases we also calculate the best constant $C\geq 0$ satisfying the previous inequality.
We apply these results to obtain weighted vector-valued inequalities for multilinear Calder\'on-Zygmund operators. 
\end{abstract}

\maketitle

\section{Introduction and main results}

The study of vector-valued inequalities for linear operators has its origins in the thirties, with works of Bochner, Marcinkiewickz, Paley and Zygmund among others.
In this context we find the so-called Marcinkiewicz-Zygmund inequalities for linear operators, regarding the $\ell^r$-valued extensions of linear operators between real $L^p$-spaces. That is, given $1\leq p,q,r \leq \infty$, the triple $(p,q,r)$ is said to satisfy a Marcinkiewicz-Zygmund inequality if there is a constant $C$ such that for each bounded operator $T\colon L^q(\mu) \to L^p(\nu)$,
($\mu$ and $\nu$ arbitrary $\sigma$-finite measures, $L^q(\mu)$ and $L^p(\nu)$ real spaces),
each $n \in \N$ and  functions $f_1,\dots,f_n \in L^q(\mu)$,
\begin{equation}\label{MZ property}
\left(\int \left(\sum_{k=1}^n |T(f_k)(\omega)|^r\right)^{p/r} d\nu(\omega)\right)^{1/p} \leq C \|T\| \left(\int \left(\sum_{k=1}^n |f_k(\omega)|^r\right)^{q/r} d\mu(\omega)\right)^{1/q}.
\end{equation}
The infimum of all the constants $C\geq 1$ satisfying \eqref{MZ property} is denoted by $k_{q,p}(r)$ (setting $k_{q,p}(r)=\infty$ if there is not such constant).  Fixed $n \in \N$, let $k_{q,p}^{(n)}(r) \in [1, \infty)$ be the infimum of all the constants $C\geq 0$ satisfying \eqref{MZ property} for each $T$ but for only $n$ functions $f_k$.
Note that given $T\colon L^q(\mu) \to L^p(\nu)$ we can consider the natural $\ell^r_n$-valued extension $T^{\ell^r_n} \colon L^q(\mu, \ell^r_n) \to L^p(\nu, \ell^r_n)$ defined by
\begin{equation}\label{l_r valued linear operator}
T^{\ell^r_n}((f^1, \dots, f^n)) := (Tf^1, \dots, Tf^n).
\end{equation}
It is clear that $k_{q,p}^{(n)}(r)=\sup \|T^{\ell^r_n}\|$, where the supremum is taken over all the operators $T \colon L^q(\mu) \to L^p(\nu)$ with norm $\|T\|\leq 1$. Analogously, the validity of \eqref{MZ property} is equivalent to saying that each $T \colon L^q(\mu) \to L^p(\nu)$ has a natural $\ell^r$-valued extension and, in that case,
$
k_{q,p}(r) = \sup \|T^{\ell^r} \colon L^q(\mu, \ell^r) \to L^p(\nu, \ell^r) \|,
$
where the supremum is taken over all the operators $T \colon L^q(\mu) \to L^p(\nu)$ with norm $\|T\| \leq 1$. It is known (see \cite[29.12]{DefFlo}) that this supremum does not change if it is taken over two fixed measures $\mu$ and $\nu$ such that $L^q(\mu)$ and $L^p(\nu)$ are infinite-dimensional.
It is worth mentioning that the problem of determining the constants $k_{q,p}^{(n)}(r)$ is a generalization of the complexification problem, that is, the computation of the so-called \emph{complexification constants} ($k_{q,p}^{(2)}(2)$ in our terminology) of operators in real $L_p$-spaces, which relate the norm of an operator and its complexification.

In \cite{MarZyg}, Marcinkiewicz and Zygmund proved that $k_{q,p}(2)<\infty$ for $1 \leq p,q < \infty$ and $k_{q,p}(r)<\infty$ whenever $1 \leq \max(p,q)<r<2$. In these cases, they also obtained estimates for the constants $k_{q,p}(r)$ in terms of the $q$-th moment of $r$-stable L\'evy measures. In the particular case $q=p$, they obtained $k_{p,p}(2)=1$. Herz extended this last equality in \cite{Her}, showing that $k_{p,p}(r)=1$ for $1<p<\infty$ and $\min(p,2)\leq r \leq \max(p,2)$, and Grothendieck established the important case $K_{G,\R}:=k_{\infty,1}(2)<\infty$ ($K_{G,\R}$ stands for the so-called Grothendieck constant in the real case).
A systematic study of the constants $k_{q,p}(r)$ and the precise asymptotic growth of $k_{q,p}^{(n)}(r)$ is addressed in \cite{DefJun, GasMal}. In the following theorem, we state some properties of the constants $k_{q,p}(r)$ (including monotonicity and duality) that can be found in the mentioned papers.

\begin{theorem}[\cite{DefJun, GasMal}] \label{propiedades de los kqp}
\begin{enumerate}
\item[\rm (i)]
$
k_{q,p}(r) = \lim_{n \to \infty} k_{q,p}^{(n)}(r).
$
\item[\rm (ii)] $k_{q_1,p_1}^{(n)}(r) \leq k_{q_2,p_2}^{(n)}(r)$ whenever $q_1 \leq q_2$ and $p_2 \leq p_1$.
\item[\rm (iii)] As a function of $r$, $k_{q,p}(r)$ is decreasing on $[1,2]$ and increasing on $[2, \infty]$.
\item[\rm (iv)] $k_{q,p}(r) = k_{p', q'}(r')$.
\end{enumerate}
\end{theorem}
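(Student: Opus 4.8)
The four assertions are largely independent, so I would treat them one at a time, after recording one reduction I would use throughout: since the supremum defining $k_{q,p}^{(n)}(r)$ is unchanged if taken over two fixed infinite-dimensional spaces $L^q(\mu),L^p(\nu)$ (the cited \cite[29.12]{DefFlo}), and since any finite configuration of functions lies in a finite-dimensional sublattice, one has $k_{q,p}^{(n)}(r)=\sup\{\|A^{\ell^r_n}\|:\ A\colon \ell^q_N\to\ell^p_M,\ \|A\|\le 1,\ N,M\in\N\}$, where $A$ acts coordinatewise on $n$-tuples. For (i), I would first note that $n\mapsto k_{q,p}^{(n)}(r)$ is nondecreasing, since a configuration of $n-1$ functions is a configuration of $n$ functions with a zero appended, so the limit exists in $[1,\infty]$. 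The bound $\lim_n k_{q,p}^{(n)}(r)\le k_{q,p}(r)$ is immediate, each finite inequality being a special case of the $\ell^r$-valued one; for the reverse I would apply the finite inequality with constant $\lim_n k_{q,p}^{(n)}(r)$ to the truncations $(f_1,\dots,f_n,0,0,\dots)$ of a sequence in $L^q(\mu;\ell^r)$ and pass to the limit by monotone convergence, since $\sum_{k\le n}|Tf_k|^r$ and $\sum_{k\le n}|f_k|^r$ increase to their infinite counterparts.

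Assertion (iv) is the cleanest, and I would prove it in the finite-$n$ form $k_{q,p}^{(n)}(r)=k_{p',q'}^{(n)}(r')$; the stated identity then follows by letting $n\to\infty$ via (i). For $1\le p,r<\infty$ the vector-valued duality $(L^p(\nu;\ell^r))^\ast=L^{p'}(\nu;\ell^{r'})$ is isometric, and the pairing computation $\langle A^{\ell^r_n}F,G\rangle=\sum_{k=1}^n\langle Af_k,g_k\rangle=\sum_{k=1}^n\langle f_k,A^\ast g_k\rangle=\langle F,(A^\ast)^{\ell^{r'}_n}G\rangle$ identifies $(A^{\ell^r_n})^\ast=(A^\ast)^{\ell^{r'}_n}$. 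Hence $\|A^{\ell^r_n}\|=\|(A^\ast)^{\ell^{r'}_n}\|$, and taking the supremum over $\|A\|=\|A^\ast\|\le 1$ gives the claim; the endpoints $p,r\in\{1,\infty\}$ are handled by the usual weak-$\ast$/limiting argument.

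Assertion (ii) is the main obstacle. I would split it into monotonicity in the codomain and in the domain. The codomain part reduces to the domain part through the finite-$n$ duality just established: writing $k_{q,p_1}^{(n)}=k_{p_1',q'}^{(n)}$ and $k_{q,p_2}^{(n)}=k_{p_2',q'}^{(n)}$ and using $p_2\le p_1\Rightarrow p_1'\le p_2'$, the desired $k_{q,p_1}^{(n)}\le k_{q,p_2}^{(n)}$ becomes exactly domain-monotonicity for exponents $p_1'\le p_2'$ with fixed codomain $q'$. So it suffices to show that, for fixed $p$ and $q_1\le q_2$, one has $k_{q_1,p}^{(n)}(r)\le k_{q_2,p}^{(n)}(r)$. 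One cannot simply reuse the extremal operator, because enlarging $q$ shrinks the domain norm and so \emph{tightens} the constraint $\|A\|\le 1$; a genuinely new operator must be produced. Starting from a near-extremal $A\colon\ell^{q_1}_N\to\ell^p_M$ and its functions, I would build an operator with domain exponent $q_2$ and at least as large an extension ratio by a change-of-density argument together with the nonlinear Mazur-type substitution $f\mapsto|f|^{q_1/q_2}\operatorname{sgn} f$ between $L^{q_1}$ and $L^{q_2}$, transporting the given \emph{configuration} (rather than conjugating the operator, which the nonlinearity forbids) while preserving $\|A\|\le1$ and the $\ell^r$-sums. Making this substitution interact correctly with the fixed codomain and with the $\ell^r_n$-norms is the technical heart.

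For (iii) I would first obtain the structural fact that $1/r\mapsto k_{q,p}(r)$ is log-convex: $L^q(\ell^r)$ and $L^p(\ell^r)$ form complex-interpolation scales, $[L^p(\ell^{r_0}),L^p(\ell^{r_1})]_\theta=L^p(\ell^{r_\theta})$ with $1/r_\theta=(1-\theta)/r_0+\theta/r_1$, and the operator $T^{\ell^\bullet}$ is the same on both endpoints, so $\|T^{\ell^r}\|$ is log-convex in $1/r$ and hence so is the supremum $k_{q,p}(r)$. A convex function is decreasing and then increasing about its minimizer, so it remains to show the minimizer is $r=2$. This I would anchor at the Hilbertian index using the Gaussian representation $\big(\sum_k|a_k|^2\big)^{1/2}=c\,(\mathbb{E}|\sum_k a_k g_k|^{\lambda})^{1/\lambda}$: combined with the linearity of $T$ and the rotational invariance of the Gaussian, this exhibits $r=2$ as the most economical extension, forcing the minimum there; the two one-sided monotonicities on $[1,2]$ and $[2,\infty]$ are moreover exchanged by the duality (iv). The moment bookkeeping for these stable/Gaussian representations, in the spirit of Marcinkiewicz--Zygmund, is routine but is where the quantitative comparison actually resides.
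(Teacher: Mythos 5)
For calibration: the paper never proves Theorem~\ref{propiedades de los kqp} at all --- it is imported from \cite{DefJun,GasMal} --- so your proposal can only be measured against the literature and against the multilinear analogues the paper does prove (Proposition~\ref{monotonia en p y r}, Lemma~\ref{lema monotonia en r}, Corollary~\ref{interpolation property of r}). Your (i) is fine, and so is (iv): after your own finite-dimensional reduction everything lives in $\ell^q_N$, $\ell^p_M$, $\ell^r_n$, where the adjoint identity $(A^{\ell^r_n})^\ast=(A^\ast)^{\ell^{r'}_n}$ is unproblematic for \emph{all} $1\le p,r\le\infty$, so no separate endpoint argument is even needed. The genuine gap is in (ii). Having reduced codomain monotonicity to domain monotonicity via (iv), you must prove domain monotonicity outright, and this step --- which you yourself flag as ``the technical heart'' --- is left as a sketch. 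Worse, the tool you name would fail as described: applying the substitution $f\mapsto|f|^{q_1/q_2}\operatorname{sgn} f$ to each $f_k$ of the configuration gives $\sum_k\bigl||f_k|^{q_1/q_2}\bigr|^r=\sum_k|f_k|^{rq_1/q_2}$, which is \emph{not} a function of $\sum_k|f_k|^r$, so the per-function Mazur map destroys exactly the $\ell^r$-sums the inequality is about.

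What does work is a change of density through the envelope, with the Mazur map applied only to the envelope: let $h=\bigl(\sum_k|f_k|^r\bigr)^{1/r}$, normalize $\|h\|_{L^{q_1}(\mu)}=1$, put $g_k=f_k\,h^{\frac{q_1}{q_2}-1}$ on $\{h>0\}$ (off this set every $f_k$ vanishes), so that $\bigl(\sum_k|g_k|^r\bigr)^{1/r}=h^{q_1/q_2}$ has $L^{q_2}(\mu)$-norm one, and set $S=T\circ M$ where $M$ is multiplication by $h^{1-q_1/q_2}$; H\"older with exponents $q_2/q_1$ and $(q_2/q_1)'$, together with $\|h\|_{L^{q_1}(\mu)}=1$, gives $\|M\colon L^{q_2}(\mu)\to L^{q_1}(\mu)\|\le1$, hence $\|S\|\le\|T\|$ and $Sg_k=Tf_k$, which yields $k^{(n)}_{q_1,p}(r)\le k^{(n)}_{q_2,p}(r)$. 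Alternatively --- and this is the route matching the paper's own multilinear argument --- run your reduction backwards: codomain monotonicity has a direct two-line proof via $\|F\|_{L^{p_1}}=\sup_{\|g\|_{(p_1/p_2)'}\le1}\bigl\|F|g|^{1/p_2}\bigr\|_{L^{p_2}}$ and $T_g=|g|^{1/p_2}T$ (this is verbatim the proof of Proposition~\ref{monotonia en p y r}(i)), and then domain monotonicity follows from it by your duality (iv), with no new construction. Finally, a caveat on (iii): your convexity-plus-anchoring scheme is sound and genuinely different from the standard proof ($s$-stable variables for the decrease on $[1,2]$, as in Proposition~\ref{monotonia en p y r}(ii), then duality for $[2,\infty]$), but the anchoring inequality $k_{q,p}(2)\le k_{q,p}(r)$ requires applying the exponent-$r$ inequality to the continuously indexed family $\sum_k f_kg_k(t)$, i.e.\ the analogue of Lemma~\ref{lema monotonia en r}; its simple-function approximation argument is the real content there and should be stated as a lemma rather than absorbed into ``routine bookkeeping''.
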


In \cite{DefJun}, the set of all the triples $1 \leq p,q,r \leq \infty$ satisfying $k_{q,p}(r) < \infty$ is determined and also the exact value of this constant is obtained in almost all the cases. We state as a theorem these results, that can be found in Sections~4, 5 and 6 of the aforementioned article.

\begin{theorem}[\cite{DefJun}]\label{constantes kqp para todos los triples}
\begin{enumerate}
\item[\rm (i)] If $q=1$ or $p=\infty$, then $k_{q,p}(r)=1$.
\item[\rm (ii)] Let $1 < q \leq p < \infty$. Then $k_{q,p}(r) < \infty$ if and only if $\min\{q,2\} \leq r \leq \max\{p,2\}$. Moreover, $k_{q,p}(r)=1$ in that case.
\item[\rm (iii)] Let $1 \leq p \leq q \leq \infty$ with $q\neq 1$ and $p\neq \infty$ (these cases are considered in (i)). Then $k_{q,p}(r) < \infty$ if and only if one of the following cases holds:
\begin{itemize}
\item $p=q=r$, in which case $k_{q,p}(r)=1$;
\item $1 \leq p \leq q \leq 2$ and $q < r \leq 2$, in which case $k_{q,p}(r) = \frac{c_{r,q}}{c_{r,p}}$;
\item $2 \leq p \leq q \leq \infty$ and $2 \leq r <p$, in which case $k_{q,p}(r) = \frac{c_{r',p'}}{c_{r',q'}}$;
\item $1 \leq p \leq 2 \leq q \leq \infty$ and $r=2$; if, moreover, $p=2$ or $q=2$ then $k_{q,p}(2) = \frac{c_{2,q}}{c_{2,p}}$.
\end{itemize}
The constant $c_{r,q}$ denotes the $q$-th moment of $r$-stable L\'evy measure.
The only case in which $k_{q,p}(r)$ is not determined is when $1 \leq p < r=2 < q \leq \infty$.
\end{enumerate}
\end{theorem}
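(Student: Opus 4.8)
The plan is to establish both the finiteness region and the exact constants by combining the duality and monotonicity relations of Theorem~\ref{propiedades de los kqp} with the Marcinkiewicz--Zygmund averaging method built on $r$-stable random variables, and to obtain the endpoint equalities through Herz's identity $k_{p,p}(r)=1$. First I would use the duality $k_{q,p}(r)=k_{p',q'}(r')$ of Theorem~\ref{propiedades de los kqp}(iv) to cut the number of independent cases roughly in half: the two alternatives in (i) are dual to one another, case (ii) is dual to the finite part of (iii), and within (iii) the second and third bullets are interchanged by duality (the regime $p\le q\le2$ with $q<r\le2$ maps to $2\le q'\le p'$ with $2\le r'<q'$). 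Thus it suffices to prove each representative upper bound and each representative lower bound once.

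For the extreme case $p=\infty$ (whence $q=1$ follows by duality) I would argue directly: writing $(\sum_k|Tf_k(\omega)|^r)^{1/r}=\sup_{\|a\|_{r'}\le1}|T(\sum_k a_kf_k)(\omega)|$ and combining $|Tg(\omega)|\le\|T\|\,\|g\|_{L^q}$ with the pointwise H\"older bound $|\sum_k a_kf_k|\le(\sum_k|f_k|^r)^{1/r}$ gives $k_{q,\infty}(r)\le1$, and the reverse inequality is trivial. For the constant-$1$ statements in (ii) I would invoke Herz's identity $k_{p,p}(r)=1$ on $[\min(p,2),\max(p,2)]$ and propagate it by Theorem~\ref{propiedades de los kqp}(ii): since $q\le p$ one has $k_{q,p}(r)\le k_{q,q}(r)$ and $k_{q,p}(r)\le k_{p,p}(r)$, while the union $[\min(q,2),\max(q,2)]\cup[\min(p,2),\max(p,2)]$ equals exactly $[\min(q,2),\max(p,2)]$, so $k_{q,p}(r)=1$ throughout the claimed range. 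The diagonal case $p=q=r$ is immediate, since both sides reduce to $(\sum_k\|f_k\|_p^p)^{1/p}$ after summing $\|Tf_k\|_p\le\|T\|\,\|f_k\|_p$.

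The core computation is the upper bound $k_{q,p}(r)\le c_{r,q}/c_{r,p}$ of the second bullet of (iii), where $p\le q<r\le2$. Here I would fix independent symmetric $r$-stable variables $\theta_k$ normalized by $c_{r,s}=(\mathbb{E}|\theta_1|^s)^{1/s}$, so that $(\mathbb{E}|\sum_k a_k\theta_k|^s)^{1/s}=c_{r,s}(\sum_k|a_k|^r)^{1/r}$ for every $s<r$. Since $p<r$, pointwise on the target one has $(\sum_k|Tf_k|^r)^{1/r}=c_{r,p}^{-1}(\mathbb{E}_\theta|\sum_k\theta_kTf_k|^p)^{1/p}$; integrating, applying Fubini and the linearity of $T$, and bounding $\|T(\sum_k\theta_kf_k)\|_{L^p}\le\|T\|\,\|\sum_k\theta_kf_k\|_{L^q}$ reduces the problem to $(\mathbb{E}_\theta\|\sum_k\theta_kf_k\|_{L^q}^p)^{1/p}$. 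Because $p\le q$, the power-mean inequality replaces the $p$-th moment by the $q$-th; a second use of Fubini and of stability (valid as $q<r$) turns this into $c_{r,q}\,\|(\sum_k|f_k|^r)^{1/r}\|_{L^q}$, giving the claimed constant. The $r=2$ cases of the fourth bullet are the Gaussian specialization of the same argument.

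The hard parts are the matching lower bounds and the necessity of the finiteness conditions. For sharpness of $c_{r,q}/c_{r,p}$ I would construct near-extremal operators realizing the stable structure---testing the inequality on independent copies of a single profile against an averaging/integration operator built from the $r$-stable measure---and pass to the limit through Theorem~\ref{propiedades de los kqp}(i). The most delicate step is proving $k_{q,p}(r)=\infty$ outside the four listed regions of (iii), and outside $[\min(q,2),\max(p,2)]$ in (ii): this requires explicit families of diagonal operators on sequence spaces (or operators assembled from the Rademacher/Gaussian system) whose $\ell^r_n$-extensions have norms growing with $n$, together with a type/cotype obstruction ruling out the remaining triples. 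Finally, for $1\le p<r=2<q\le\infty$ one needs only finiteness, which follows from the Gaussian ($r=2$) averaging argument without identifying the extremal constant---consistent with that value being left undetermined.
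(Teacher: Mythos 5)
A point of order first: the paper never proves this statement. It is imported from Defant--Junge, with an explicit deferral to Sections~4, 5 and 6 of \cite{DefJun}, and is then used as a black box (in Theorem~\ref{caracterizacion caso menor 2} and Theorem~\ref{main theorem}). So there is no internal proof to compare yours with; your proposal has to stand as a reconstruction of \cite{DefJun}. Its upper-bound half is essentially right and classical: the duality reduction via Theorem~\ref{propiedades de los kqp}(iv); the $p=\infty$ case by writing $\bigl(\sum_k|Tf_k(\omega)|^r\bigr)^{1/r}$ as a supremum against $\ell^{r'}$-coefficients and applying H\"older inside $T$; the propagation of Herz's identity through monotonicity, where your observation that $[\min(q,2),\max(q,2)]\cup[\min(p,2),\max(p,2)]=[\min(q,2),\max(p,2)]$ for $q\le p$ is correct; and the $r$-stable averaging giving $k_{q,p}(r)\le c_{r,q}/c_{r,p}$ for $p\le q<r\le2$, which is precisely the $m=1$ instance of the scheme the paper itself runs in Lemma~\ref{lemma r-stable} and Theorem~\ref{caracterizacion caso menor 2}.

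The gaps sit exactly where the content of \cite{DefJun} lies, and they are genuine. First, the matching lower bounds. Your proposed extremal test (independent copies of a profile against an operator built from the stable measure) does not produce the constant. Concretely, for $p\le q<r\le 2$ take $T\colon\ell^q_n\to L^p(\Omega)$ with $Te_k=\theta_k$ and $f_k=e_k$: then $\|T\|=c_{r,p}$ and the right-hand side of \eqref{MZ property} is $C\,c_{r,p}\,n^{1/q}$, while the left-hand side is $\bigl\|\bigl(\sum_{k\le n}|\theta_k|^r\bigr)^{1/r}\bigr\|_{L^p}\sim(n\log n)^{1/r}$, because $\mathbb{P}(|\theta_k|^r>t)\sim t^{-1}$ (an $r$-stable variable has infinite $r$-th moment); since $q<r$, the resulting bound on $C$ tends to $0$. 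Dually, testing the inclusion $L^q[0,1]\hookrightarrow L^p[0,1]$ on $f_k=\theta_k$ gives a ratio tending to $1$, not to $c_{r,q}/c_{r,p}$. Nothing in your sketch replaces these failed tests; in \cite{DefJun} the sharp lower bounds (including the equality claims of the fourth bullet when $p=2$ or $q=2$) come from operator-ideal, limit-order and local-theory arguments, not from a direct extremal pair. Second, the divergence statements ($k_{q,p}(r)=\infty$ outside the listed regions), which you defer to ``a type/cotype obstruction,'' are likewise the bulk of the cited sections and require explicit constructions you do not supply. Third, the fourth bullet at $q=\infty$ cannot be ``the Gaussian specialization of the same argument'': the moment identity $\bigl(\mathbb{E}\|\sum_kg_kf_k\|_{L^q}^q\bigr)^{1/q}=c_{2,q}\,\bigl\|\bigl(\sum_k|f_k|^2\bigr)^{1/2}\bigr\|_{L^q}$ has no $L^\infty$ analogue (indeed $c_{2,q}\to\infty$ as $q\to\infty$). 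Finiteness there is Grothendieck's theorem $k_{\infty,1}(2)=K_{G,\R}<\infty$ (recalled in the Introduction) combined with monotonicity in $p$ --- a result no averaging argument yields, and closely tied to why the region $1\le p<r=2<q\le\infty$ is left undetermined.
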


We are interested in the study of Marcinkiewickz-Zygmund inequalities in the context of multilinear operators. In what follows, given $\{f_k\}_k \subset L^p(\nu)$, we denote
$$
\left\| \left(\sum_k |f_k|^r \right)^{1/r} \right\|_{L^p(\nu)}:=\left( \int \left(\sum_k |f_k(\omega)|^r \right)^{p/r} d\nu(\omega)  \right)^{1/p}.
$$
Let $m \in \N$, $1\leq q_1, \dots, q_m, p, r \leq \infty$ and consider $\q=(q_1, \dots, q_m)$.
We say that the triple $(p;\q;r)$ satisfies the Marcinkiewicz-Zygmund inequality if  there is a constant $C$ such that for all bounded multilinear operators $T\colon L^{q_1}(\mu_1) \times \cdots \times L^{q_m}(\mu_m) \to L^p(\nu)$
and  functions $\{f_{k_1}^1\}_{k_1=1}^{n_1} \subset L^{q_1}(\mu_1), \dots, \{f_{k_m}^m\}_{k_m=1}^{n_m} \subset L^{q_m}(\mu_m)$, the following inequality holds (with the usual modification when $r=\infty$)
\begin{equation}\label{multilinear MZ property}
\left\| \left(\sum_{k_1, \dots, k_m} |T(f_{k_1}^1, \dots, f_{k_m}^m)|^r\right)^{1/r}\right\|_{L^p(\nu)} \leq C \|T\| \prod_{i=1}^m \left\| \left(\sum_{k_i=1}^{n_i} |f_{k_i}^i|^r\right)^{1/r} \right\|_{L^{q_i}(\mu_i)}.
\end{equation}
As in the linear case, we denote
by $k_{\q,p}(r)$ the infimum of all the $C\geq 1$ satisfying \eqref{multilinear MZ property} and we put $k_{\q,p}(r)=\infty$ when there is no such constant.
We denote by $k_{q,p}^{(n)}(r) \in [1, \infty)$ the infimum of all the constants $C\geq 0$ satisfying \eqref{multilinear MZ property} only for $n_1=\cdots = n_m=n$. It is easy to see that $\lim_{n \to \infty} k_{\q,p}^{(n)}(r) = k_{\q,p}(r)$.
As observed in \eqref{l_r valued linear operator} for linear operators, the validity of \eqref{multilinear MZ property} is equivalent to saying that each $T\colon L^{q_1}(\mu_1) \times \cdots \times L^{q_m}(\mu_m) \to L^p(\nu)$ has the $\ell^r$-valued extension $T^{\ell^r}\colon L^{q_1}(\mu_1, \ell^r) \times \cdots \times L^{q_m}(\mu_m, \ell^r) \to L^p(\nu, \ell^r(\N \times \cdots \times\N))$ defined by
\begin{equation}\label{l_r valued multilinear operator}
T^{\ell^r}\left((f_{k_1}^1)_{k_1}, \dots, (f_{k_m}^m)_{k_m} \right) = \left(T(f_{k_1}^1, \dots, f_{k_m}^m)\right)_{k_1, \dots, k_m}.
\end{equation}
In that case $k_{\q,p}(r) = \sup \|T^{\ell^r}\|$, where the supremum is taken over all the multilinear operators $T \colon L^{q_1}(\mu_1) \times \cdots \times L^{q_m}(\mu_m) \to L^p(\nu)$  (and over all measures $\mu_i$ and $\nu$) with norm $\|T\|\leq 1$.

As we will point out in Section~\ref{comparison}, it is worth mentioning that, when dealing with inequalities of the form
$$
\left\| \left( \sum_{k_1, \dots, k_m} |T(f^1_{k_1}, \dots, f^m_{k_m})|^s \right)^{\frac{1}{s}}  \right\|_{L^p(\nu)} \leq C \prod_{i=1}^m \left\| \left( \sum_{k_i} |f^i_{k_i}|^{r_i} \right)^{\frac{1}{r_i}} \right\|_{L^{q_i}(\mu_i)},
$$
the relation between the powers $s, r_1, \dots, r_m$ is optimal when $s=r_1=\cdots = r_m$, as in \eqref{multilinear MZ property}. This establishes a difference with the inequalities of the form
$$
\left\| \left( \sum_k |T(f^1_k, \dots, f^m_k)|^r \right)^{\frac{1}{r}} \right\|_{L^p(\nu)} \leq C \prod_{i=1}^m \left\| \left( \sum_k |f^i_k|^{r_i} \right)^{\frac{1}{r_i}} \right\|_{L^{q_i}(\mu_i)}
$$
where the sum runs over only one index $k$ and the optimal relation between the powers is given by $\frac{1}{r}=\frac{1}{r_1}+\cdots + \frac{1}{r_m}$.

In \cite{GraMar}, in the context of vector-valued inequalities for multilinear Calder\'on-Zygmund operators, Grafakos and Martell addressed the multilinear  Marcinkiewicz-Zygmund inequality in the particular case $r=2$, showing  that $k_{\q,p}(2)<\infty$ for every $1\leq q_1, \dots, q_m, p<\infty$ (moreover, for $0<q_1, \dots, q_m, p<\infty$). They also proved the analogous inequality for multilinear operators from $ L^{q_1}(\mu_1) \times \cdots \times L^{q_m}(\mu_m)$ into $ L^{p,\infty}(\nu)$. Independently, Bombal, P\'erez-Garc\'ia and Villanueva proved in \cite[Thm. 4.2]{BomPerVil} a  multilinear  Marcinkiewicz-Zygmund inequality in the case $r=2$ in the more general context of multilinear operators defined on Banach lattices.
Our goal is to determine conditions on $p, \q,r$ such that the triple $(p,\q,r)$ satisfies the Marcinkiewicz-Zygmund inequality \eqref{multilinear MZ property} and, if it is possible, to calculate the exact value of the constant $k_{\q,p}(r)$. In this sense, our main result is the following.
\begin{theorem}\label{main theorem}
Let $1 \leq q_1, \dots, q_m \leq \infty$, $1 \leq p < \infty$ and $\qm=\max\{q_1, \dots, q_m\}$.
\begin{enumerate}
\item[\rm (i)] Suppose $\qm \leq p$.
\begin{enumerate}
\item[\rm (ia)] If $1 \leq \qm \leq p \leq 2$, then $k_{\q,p}(r)< \infty$ iff $\qm \leq r \leq 2$ or $\qm=1$ and $1 \leq r\leq \infty$.
\item[\rm (ib)] If $2 \leq \qm \leq p$, then $k_{\q,p}(r)< \infty$ iff $2 \leq r \leq p$.
\item[\rm (ic)] If $1 \leq \qm \leq 2 \leq p$, then $k_{\q,p}(r)< \infty$ iff $\qm \leq r \leq p$ or $\qm=1$ and $1 \leq r\leq \infty$.
\end{enumerate}

\item[\rm (ii)] Suppose $p < \qm$.
\begin{enumerate}
\item[\rm (iia)] If $1 \leq p < \qm \leq 2$, then $k_{\q,p}(r)< \infty$ iff $\qm < r \leq 2$ or $\qm=r=2$.
\item[\rm (iib)] If $2 < p < \qm$, then $k_{\q,p}(r)< \infty$ implies $2 \leq r < p$. If $r=2$ then $k_{\q,p}(r)<\infty$.
\item[\rm (iic)] If $1 \leq p \leq 2 \leq \qm$, then $k_{\q,p}(r)< \infty$ iff $r=2$.
\end{enumerate}
\end{enumerate}
If {\rm (ia), (ic)}, {\rm(iia)} holds or if $\qm \leq r \leq p$, then $k_{\q,p}(r)= k_{q_1,p}(r)\cdots k_{q_m,p}(r)$.
\end{theorem}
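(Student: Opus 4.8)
The plan is to establish the product formula by proving the two inequalities $k_{\q,p}(r)\le\prod_{i=1}^m k_{q_i,p}(r)$ and $k_{\q,p}(r)\ge\prod_{i=1}^m k_{q_i,p}(r)$ separately, the claimed identity being their conjunction. The lower bound is the easy half and, notably, requires no restriction on the ranges. For each $i$ I would use $k_{q_i,p}^{(n)}(r)\to k_{q_i,p}(r)$ to select a linear operator $T_i\colon L^{q_i}(\mu_i)\to L^p(\nu_i)$ with $\|T_i\|\le1$ together with families $\{f^i_{k_i}\}_{k_i}$ almost saturating the linear inequality \eqref{MZ property}. Forming the product measure $\nu=\nu_1\times\cdots\times\nu_m$ and the tensor operator $T(g^1,\dots,g^m)(\omega_1,\dots,\omega_m)=\prod_i T_i(g^i)(\omega_i)$, one has $\|T\|=\prod_i\|T_i\|\le1$ since norms of product functions factor over product measures. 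The decisive point is that the left-hand side of \eqref{multilinear MZ property} factors: from $\sum_{k_1,\dots,k_m}\prod_i|T_i(f^i_{k_i})(\omega_i)|^r=\prod_i\sum_{k_i}|T_i(f^i_{k_i})(\omega_i)|^r$ the multi-indexed $\ell^r$-sum of $T$ becomes $\prod_i\big(\sum_{k_i}|T_i f^i_{k_i}(\omega_i)|^r\big)^{1/r}$, whose $L^p(\nu)$-norm again factors as a product of single-variable norms. Letting the $T_i$ and $f^i_{k_i}$ approach optimality yields $k_{\q,p}(r)\ge\prod_i k_{q_i,p}(r)$.

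The upper bound is where the range hypotheses enter, and the cleanest instance is $r=2$ with $q_i\le 2\le p$ (the $r=2$ part of case (ic)), which I would treat first as a template. Introduce, for each variable $i$, an independent family $\{\theta^i_{k_i}\}$ of i.i.d.\ standard Gaussians on a probability space $(\Omega_i,P_i)$, set $F^i(t_i)=\sum_{k_i}\theta^i_{k_i}(t_i)\,f^i_{k_i}$, and use multilinearity to write $T(F^1(t_1),\dots,F^m(t_m))=\sum_{k_1,\dots,k_m}T(f^1_{k_1},\dots,f^m_{k_m})\prod_i\theta^i_{k_i}(t_i)$. Because the products $\prod_i\theta^i_{k_i}$ form an orthonormal system in $L^2(\prod_iP_i)$, integrating in $t$ recovers exactly $\big(\sum_{\vec k}|T(f^1_{k_1},\dots,f^m_{k_m})(\omega)|^2\big)^{1/2}$. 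Two applications of Minkowski's integral inequality — pulling the $L^2_t$-norm outside the $L^p_\omega$-norm (valid since $p\ge2$) and interchanging $L^2_{t_i}$ with $L^{q_i}_{\mu_i}$ (valid since $q_i\le2$) — combined with $\|T\|\le1$, the independence of the families, and the Gaussian moment identity, collapse the estimate to $\prod_i\big\|(\sum_{k_i}|f^i_{k_i}|^2)^{1/2}\big\|_{L^{q_i}(\mu_i)}$; this is exactly $k_{\q,p}(2)\le1=\prod_i k_{q_i,p}(2)$ in this range.

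For general $r\neq2$ the orthonormality miracle fails, so I would replace Gaussians by $r$-stable random variables and reduce to the linear constants \emph{one variable at a time}, as in the classical Marcinkiewicz–Zygmund scheme. Peeling the first variable with a single $r$-stable family and an exponent $s$ chosen in accordance with the relative position of $q_1,p,r$ reduces the estimate — through the stable moment identity and the same two Minkowski interchanges — to the linear constant $k_{q_1,p}(r)$ applied to an operator now valued in the mixed-norm space $L^p(\nu;\ell^r)$; iterating over the remaining variables and using independence produces the product $\prod_i k_{q_i,p}(r)$, with each factor equal either to $1$ or to a ratio $c_{r,q_i}/c_{r,p}$, exactly matching case (iia). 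The main obstacle is precisely this intermediate step: one must show that the scalar linear Marcinkiewicz–Zygmund inequality persists, \emph{with the same constant}, for operators valued in $L^p(\nu;\ell^r)$, which amounts to controlling the $\ell^r$-norm moments of $\ell^r$-valued $r$-stable vectors by the scalar constant $c_{r,s}$ and keeping the Minkowski exponent bookkeeping sharp. This stability (and hence the product formula) holds in the ranges (ia), (ic), (iia) and $\qm\le r\le p$, but fails in the excluded part of (ib), which is exactly why the formula is asserted only there. Finally, for the sub-range with $r>2$ inside (ic) and $\qm\le r\le p$ the stable moments no longer exist; there I would instead invoke the duality relation $k_{\q,p}(r)=k_{\q',p'}(r')$, the multilinear analogue of Theorem~\ref{propiedades de los kqp}(iv), to transfer the already-proven case $r'<2$, noting that the dual exponents again satisfy the corresponding hypotheses and that $\prod_i k_{q_i,p}(r)=\prod_i k_{q_i',p'}(r')$.
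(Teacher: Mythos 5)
Your lower bound (tensorizing near-optimal linear operators over product measures, with both sides of the inequality factoring) is exactly the paper's proof of Proposition~\ref{relacion lineal-multilineal}, and your Gaussian argument for $r=2$, $q_i\le 2\le p$ is correct and close to the paper's Lemma~\ref{lemma r-stable}. The two steps that carry the real weight, however, have genuine gaps. For general $r\ne 2$ you peel off one variable at a time, which requires the \emph{linear} Marcinkiewicz--Zygmund inequality to hold, with the same constant $k_{q_1,p}(r)$, for operators valued in the mixed-norm lattice $L^p(\nu;\ell^r)$. You yourself name this as ``the main obstacle'' and then simply assert it in the ranges (ia), (ic), (iia) and $\qm\le r\le p$; no argument is given, and this vector-valued stability is precisely what a proof must supply (it is automatic only when $r=p$, where $L^p(\nu;\ell^p)$ is again an $L^p$-space of a product measure). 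The paper never needs it: its Lemma~\ref{lemma r-stable} is a simultaneous, multi-variable Khintchine-type inequality for products $w_{k_1}(t_1)\cdots w_{k_m}(t_m)$ of independent $r$-stable variables, so that multilinearity turns the random sum into $T\bigl(\sum_{k_1}f^1_{k_1}w_{k_1}(t_1),\dots,\sum_{k_m}f^m_{k_m}w_{k_m}(t_m)\bigr)$, the operator norm is applied pointwise in $(t_1,\dots,t_m)$, and only afterwards are the variables separated by Fubini, using Minkowski when $q_i<p$ and H\"older when $q_i\ge p$ together with the scalar stable moment identity; no vector-valued linear inequality ever appears.

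The second gap is worse: for $r>2$ you invoke a multilinear duality $k_{\q,p}(r)=k_{\q',p'}(r')$ as ``the multilinear analogue of Theorem~\ref{propiedades de los kqp}(iv)''. No such relation is established in the paper, and none can be: an $m$-linear operator has no adjoint of the same shape (only partial adjoints exchanging the target with one coordinate), and the relation is in fact false in general --- by Proposition~\ref{obs q=1} one has $k_{(1,1),1}(r')=1$ for every $r'$, while Remark~\ref{rem-lit} gives $k_{(\infty,\infty),\infty}(r)=\infty$ for $r<4/3$. The paper covers the range $\qm\le r\le p$ (hence every $r>2$ instance of the product formula) by a completely different route: the exact identity $k_{\q,r}(r)=\prod_i k_{q_i,r}(r)$ at the endpoint $p=r$, proved by induction using counting measures (the one situation in which your peeling idea is legitimate), followed by monotonicity of $k_{\q,p}(r)$ in $p$ (Proposition~\ref{monotonia en p y r}(i)), whose proof dualizes only the target exponent and keeps the operator multilinear. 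Finally, your proposal leaves case (ib) unproved for $2\le r\le \qm$: there the paper interpolates in $r$ (Corollary~\ref{interpolation property of r}) between the known estimate $k_{\q,p}(2)<\infty$ and $k_{\q,p}(\qm)=1$, a step your outline does not replace.
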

Note that the only case in which we do not get an equivalence is in (iib), where we obtain a necessary condition for $k_{\q,p}(r)< \infty$ (and the, already known, sufficient condition $r=2$). In Section~\ref{applications} we address the case $0< p < 1$ and the case of weak type estimates, which will be relevant in obtaining vector-valued estimates for multilinear singular integrals.

We also obtain some properties of the constants $k_{\q,p}(r)$ that, besides being important in the proof of the previous theorem, are interesting on their own. First, we study the relation between  $k_{\q,p}(r)$ and the \textit{linear} constants $k_{q_1,p}(r), \dots, k_{q_m,p}(r)$. We get the following result which, in particular, shows that  if $k_{q_i,p}(r)=\infty$ for some $1\leq i\leq m$, then $k_{\q,p}(r)=\infty$.
\begin{proposition} \label{relacion lineal-multilineal}
Let $1 \leq p,q_1, \dots, q_m, r \leq \infty$. Then, for each $n \in \N$ we have
\begin{equation}\label{desigualdad lineal-multilineal}
k_{q_1;p}^{(n)}(r) \cdots k_{q_m;p}^{(n)}(r) \leq k_{\q;p}^{(n)}(r).
\end{equation}
Consequently, $k_{q_1;p}(r) \cdots k_{q_m;p}(r) \leq k_{\q;p}(r)$. Moreover, if $p=r$ then equality holds.
\end{proposition}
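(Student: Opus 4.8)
The plan is to establish the two directions separately: the inequality \eqref{desigualdad lineal-multilineal} for arbitrary $r$, and then the reverse estimate in the special case $p=r$, from which equality follows.

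For the lower bound I would realize the product of the linear constants by a tensor-type construction. Fix $n\in\N$ and, for each $i$, an arbitrary bounded linear $T_i\colon L^{q_i}(\mu_i)\to L^p(\nu_i)$. On the product measure $\nu:=\nu_1\otimes\cdots\otimes\nu_m$ define the $m$-linear operator
\[
T(f^1,\dots,f^m)(\omega_1,\dots,\omega_m):=\prod_{i=1}^m (T_if^i)(\omega_i).
\]
By Tonelli the codomain norm factorizes, $\|T(f^1,\dots,f^m)\|_{L^p(\nu)}=\prod_i\|T_if^i\|_{L^p(\nu_i)}$, whence $\|T\|\le\prod_i\|T_i\|$; the same factorization applied to
\[
\sum_{k_1,\dots,k_m}\bigl|T(f^1_{k_1},\dots,f^m_{k_m})\bigr|^r=\prod_{i=1}^m\sum_{k_i}\bigl|T_if^i_{k_i}\bigr|^r
\]
shows that the left-hand side of \eqref{multilinear MZ property} for $T$ equals $\prod_i\bigl\|(\sum_{k_i}|T_if^i_{k_i}|^r)^{1/r}\bigr\|_{L^p(\nu_i)}$. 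Inserting this into \eqref{multilinear MZ property} with constant $k_{\q;p}^{(n)}(r)$ and bounding $\|T\|\le\prod_i\|T_i\|$, the estimate separates into a product of the $m$ linear Marcinkiewicz--Zygmund ratios, so that $\prod_i R_i\le k_{\q;p}^{(n)}(r)$, where $R_i$ is the linear ratio attached to $T_i$ and $\{f^i_{k_i}\}_{k_i}$. Since the data in distinct factors are independent, taking the supremum in each slot gives $\sup R_i=k_{q_i;p}^{(n)}(r)$, and \eqref{desigualdad lineal-multilineal} follows; letting $n\to\infty$ yields the product inequality for the constants $k_{q_i;p}(r)$ and $k_{\q;p}(r)$.

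For the equality when $p=r$ it remains to prove $k_{\q;p}^{(n)}(p)\le\prod_i k_{q_i;p}^{(n)}(p)$. Freezing variables and using scalar operator norms is too lossy (it only closes when every $q_i=p$), so instead I would extend one variable at a time while retaining the vector-valued structure, exploiting that for $p=r$ the space $L^p(X,\ell^p_n)$ is \emph{isometrically} $L^p(X\otimes\lambda)$, with $\lambda$ the counting measure on $\{1,\dots,n\}$, i.e.\ again an $L^p$-space. Starting from $T$ with $\|T\|\le1$, freeze all but the first slot: the resulting linear map has its $\ell^p_n$-extension bounded by $k_{q_1;p}^{(n)}(p)$ times its norm, so taking the $\ell^p_n$-extension in the first coordinate produces an $m$-linear operator $T_1$ into $L^p(\nu\otimes\lambda)$ with $\|T_1\|\le k_{q_1;p}^{(n)}(p)\,\|T\|$. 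Because the enlarged codomain is still an $L^p$-space over a $\sigma$-finite measure, the same step applies to the second slot, then the third, and so on. After $m$ steps one obtains exactly the multilinear extension $T^{\ell^p}$, now viewed with codomain $L^p(\nu\otimes\lambda^{\otimes m})=L^p(\nu,\ell^p(\{1,\dots,n\}^m))$, and $\|T^{\ell^p}\|\le\prod_i k_{q_i;p}^{(n)}(p)\,\|T\|$. Taking the supremum over $T$ gives the reverse inequality, which together with the lower bound yields equality for every $n$, hence in the limit.

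The main obstacle is the $p=r$ equality. The observation that unlocks it is that the linear constant $k_{q_i;p}^{(n)}(p)$ is, by definition, a supremum over operators into \emph{arbitrary} $L^p$-spaces over all $\sigma$-finite measures, so it continues to govern each single-variable extension even after the codomain has been enlarged to $L^p(\nu\otimes\lambda^{\otimes(i-1)})$; this is precisely what allows the one-variable estimate to be iterated without loss. One should also verify that the successive single-variable extensions compose to the full coordinatewise extension $T^{\ell^p}$, and that the identification $L^p(\cdot,\ell^p_n)\cong L^p(\cdot\otimes\lambda)$ is isometric (valid for $p<\infty$), with the usual modifications when some exponent is infinite.
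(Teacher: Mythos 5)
Your proof is correct. The first half --- realizing the product of the linear constants through the tensor operator $T(f^1,\dots,f^m)=T_1(f^1)\otimes\cdots\otimes T_m(f^m)$ on the product measure, and factorizing both the operator norm and the square-function norm --- is exactly the paper's argument (its preliminary remark on tensor operators). For the equality at $p=r$, both you and the paper rest on the same key fact, namely that $L^r(\nu;\ell^r_n)$ is isometrically $L^r$ of a product measure, so the codomain stays an $L^p$-space over a $\sigma$-finite measure and the linear constants continue to apply; but the organization differs. The paper argues by induction on $m$: it fixes the functions $f^2_{k_2},\dots,f^m_{k_m}$, packs them into a single \emph{linear} operator $S\colon L^{q_1}(\mu_1)\to L^r(\nu\otimes\nu_2\otimes\cdots\otimes\nu_m)$ (with $\nu_i$ counting measures), applies the linear constant $k_{q_1,r}^{(n)}(r)$ to $S$, and then bounds $\|S\|$ by the $(m-1)$-linear constant supplied by the induction hypothesis, yielding $k_{\q,r}^{(n)}(r)\le k_{q_1,r}^{(n)}(r)\,k_{(q_2,\dots,q_m),r}^{(n)}(r)$. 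You instead compose $m$ successive single-variable $\ell^p_n$-valued extensions $T\mapsto T_1\mapsto\cdots\mapsto T^{\ell^p_n}$, freezing already-extended slots at vector-valued elements; each step invokes only a linear constant, precisely because the enlarged codomain $L^p(\nu\otimes\lambda^{\otimes j})$ is again an $L^p$-space. The two arguments are essentially mirror images of one iteration, but yours is slightly more self-contained in that no multilinear constant ever enters the estimate; the price is the need to check that the successive extensions compose to the full extension $T^{\ell^p_n}$ and that $\ell^p_n(\ell^p_n)\cong\ell^p(\{1,\dots,n\}^2)$ isometrically --- points you correctly flag and which do hold (also when an exponent is infinite, with the usual modifications).
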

We also prove the following monotonicity properties of $k_{\q, p}(r)$ as a function of $p$ and $r$, partially extending to multilinear setting the properties (ii) and (iii) of Theorem~\ref{propiedades de los kqp}.
\begin{proposition}\label{monotonia en p y r}
Let $1\leq q_1, \dots, q_m,p \leq \infty$.
\begin{enumerate}
\item[\rm (i)] If  $1\leq p_2<p_1 \leq \infty$, then $k_{\q,p_1}^{(n)}(r) \leq k_{\q,p_2}^{(n)}(r)$ for all $1\leq r \leq \infty$. Consequently, $k_{\q,p_1}(r) \leq k_{\q,p_2}(r)$.
\item[\rm (ii)] If $1\leq r<s \leq 2$, then $k_{\q,p}(s) \leq k_{\q,p}(r)$.
\end{enumerate}
\end{proposition}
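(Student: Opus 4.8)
The plan is to prove the two monotonicity statements by unrelated mechanisms: a weighted duality trick for monotonicity in $p$, and linearization by stable random variables for monotonicity in $r$.

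For part (i), fix $1\le p_2<p_1\le\infty$, an operator $T\colon L^{q_1}(\mu_1)\times\dots\times L^{q_m}(\mu_m)\to L^{p_1}(\nu)$ with $\|T\|\le 1$, and finitely many families $\{f^i_{k_i}\}$. The idea is to test the target $L^{p_1}$-norm against weights: for a measurable $w\ge 0$ on the underlying space normalized by $\int w^{\sigma}\,d\nu=1$, where $\sigma=(p_1/p_2)'$ (and $\sigma=1$, i.e.\ $w$ a probability density, when $p_1=\infty$), I would regard $T$ as mapping into $L^{p_2}(w\,d\nu)$. H\"older's inequality with exponents $p_1/p_2$ and $\sigma$ gives $\|Tg\|_{L^{p_2}(w\,d\nu)}\le\|Tg\|_{L^{p_1}(\nu)}$ for every input, so this operator still has norm $\le 1$, and the definition of $k^{(n)}_{\q,p_2}(r)$ applies to it, yielding
\begin{equation*}
\left(\int F^{p_2}\,w\,d\nu\right)^{1/p_2}\le k^{(n)}_{\q,p_2}(r)\,\prod_{i=1}^m\left\|\left(\sum_{k_i}|f^i_{k_i}|^r\right)^{1/r}\right\|_{L^{q_i}(\mu_i)},
\end{equation*}
where $F=(\sum_{\vec k}|T(f^1_{k_1},\dots,f^m_{k_m})|^r)^{1/r}$. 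Since neither side except the left depends on $w$, taking the supremum over all admissible $w$ and using that the conjugate exponent of $\sigma$ equals $p_1/p_2$ turns the left-hand side into exactly $\|F\|_{L^{p_1}(\nu)}$ (and $\|F\|_{L^\infty(\nu)}$ when $p_1=\infty$). This gives $k^{(n)}_{\q,p_1}(r)\le k^{(n)}_{\q,p_2}(r)$, and letting $n\to\infty$ yields the statement for $k_{\q,p}(r)$.

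For part (ii), fix $1\le r<s\le 2$ and set $K=k_{\q,p}(r)$. The engine is the classical stable representation: if $(\theta_k)_k$ are i.i.d.\ symmetric $s$-stable variables, then since $r<s$ the $r$-th moment $c_{s,r}$ is finite and $(\sum_k|a_k|^s)^{1/s}=c_{s,r}^{-1}(\mathbb{E}|\sum_k a_k\theta_k|^r)^{1/r}$ for all scalars. Combined with the multilinearity of $T$, which lets one pull a randomized sum inside a single slot, this suggests linearizing all $m$ coordinates at once via independent stable families and applying the integral-index form of the $r$-inequality. The main obstacle is that a product of independent stable variables over the multi-index $\vec k$ is no longer stable, so $\mathbb{E}|\sum_{\vec k}a_{\vec k}\prod_i\theta^i_{k_i}|^r$ does not collapse to a clean multiple of $(\sum_{\vec k}|a_{\vec k}|^s)^{1/s}$; the target side cannot be linearized simultaneously.

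I would circumvent this by upgrading one coordinate at a time. The scalar stable identity holds pointwise in any K\"othe function lattice $E$, so the linear transfer argument---represent, apply the continuous-index $r$-MZ inequality (valid with the same constant by approximating the $\theta$-integral with finite sums), and re-express the input side by the same identity---shows that any bounded linear $U\colon L^{q_i}(\mu_i)\to E$ satisfying the discrete $r$-MZ inequality into $E$ with constant $C$ also satisfies the $s$-version with the same $C$. Applying this with slot $i=1$, the K\"othe space $E=L^p(\nu;\ell^r(\N^{m-1}))$, and $Ug=(T(g,f^2_{k_2},\dots,f^m_{k_m}))_{k_2,\dots,k_m}$---whose $r$-MZ constant is $K\prod_{i\ge 2}\|(\sum_{k_i}|f^i_{k_i}|^r)^{1/r}\|_{q_i}$ by hypothesis---upgrades the first exponent from $r$ to $s$, producing a mixed-norm inequality. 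The conclusion of each step is precisely the discrete $r$-MZ hypothesis into the new (mixed-norm, still K\"othe) lattice needed to upgrade the next coordinate, so iterating over $i=1,\dots,m$ converts every exponent to $s$; the nested $\ell^s$-norms collapse to $\ell^s(\N^m)$ and all $c_{s,r}$ factors cancel, giving $k_{\q,p}(s)\le K=k_{\q,p}(r)$. The step requiring the most care is exactly this bookkeeping of mixed-norm lattices together with the check that the single-coordinate linear transfer is insensitive to the remaining lattice structure.
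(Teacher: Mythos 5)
Your proof of (i) is the paper's own argument in different notation: testing the $L^{p_1}$-norm against normalized weights $w$ and regarding $T$ as an operator into $L^{p_2}(w\,d\nu)$, of norm at most $\|T\|$ by H\"older, is exactly the paper's duality step with $T_g=|g|^{1/p_2}T$.

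For (ii) you take a genuinely different route. The paper linearizes all $m$ slots simultaneously with independent $s$-stable families; the obstruction you correctly identify (products of independent stable variables are not stable) is absorbed into the one-sided multilinear stable inequality of Lemma~\ref{lemma r-stable}, proved there by iterated Minkowski, which is then combined with the continuous-index multilinear inequality of Lemma~\ref{lema monotonia en r}. You instead upgrade one slot at a time, freezing the remaining slots and viewing $T$ as a \emph{linear} operator into a mixed-norm K\"othe lattice of the type $L^p(\nu;\ell^s(\ell^r))$, to which a lattice-valued version of the linear Gasch--Maligranda transfer (stable identity plus continuous-index linear MZ inequality) applies with the same constant. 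Both proofs consume the same raw materials --- the stable identity \eqref{ecuacion r estables}, simple-function approximation to pass to a continuous index, and Minkowski's inequality --- but yours packages the multilinearity into the lattice target and reduces everything to a reusable linear lemma, while the paper's packages it into a multilinear stable inequality and keeps all constants explicit in a single pass. Your route is more modular; the paper's is more self-contained.

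One assertion in your sketch is too strong when $m\ge 3$: the conclusion of a step is not ``precisely'' the hypothesis of the next one. After upgrading slot $2$, the new exponent lands innermost, producing the mixed norm $\ell^s_{k_1}\big(\ell^r_{k_3,\dots,k_m}(\ell^s_{k_2})\big)$ inside $L^p(\nu)$, whereas step $3$ requires the norm $\ell^s_{k_1,k_2}\big(\ell^r_{k_3,\dots,k_m}\big)$. These are different mixed norms, and you must dominate the latter by the former via Minkowski's inequality
$$
\Big(\sum_{k_2}\Big(\sum_{\vec k'}|a_{k_2,\vec k'}|^r\Big)^{s/r}\Big)^{1/s}\;\le\;\Big(\sum_{\vec k'}\Big(\sum_{k_2}|a_{k_2,\vec k'}|^s\Big)^{r/s}\Big)^{1/r},
$$
which holds precisely because $r<s$. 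Since the inequality points the right way, the induction closes and your argument is correct; but this rearrangement is a necessary step, not mere bookkeeping --- it is, in fact, the same Minkowski mechanism that powers the paper's proof of Lemma~\ref{lemma r-stable}.
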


Although Theorem~\ref{main theorem} seems to suggest that $k_{\q,p}(r) < \infty$ if and only if $k_{q_i,p}(r)<\infty$ for all $1 \leq i \leq m$,
we establish an important difference between the Marcinkiewicz-Zygmund inequalities in the linear and multilinear cases when, for instance, we put $p=\infty$. First, as a simple consequence of the optimality in the well-known Littlewood's $4/3$ inequality, we see this different behavior between the linear and bilinear cases when we put $q_1=q_2=p=\infty$.

\begin{remark}\label{rem-lit}
Denote $\vec{\infty}=(\infty,\infty)$. Then, although $k_{\infty,\infty}(r)=1$ for all $1\leq r\leq \infty$ (this was stated in Theorem~\ref{constantes kqp para todos los triples}), we have $k_{\vec{\infty},\infty}(r)=\infty$ whenever $1\leq r < 4/3$.
Indeed, suppose that $k_{\vec{\infty},\infty}(r) < \infty$. 
Then, for any $n \in \N$ and $T\colon \ell^\infty_n \times \ell^\infty_n \to \R$ we  have
$$
 \left(\sum_{k_1, k_2=1}^n |T(e_{k_1}, e_{k_2})|^r\right)^{1/r} \leq k_{\vec{\infty},\infty}(r) \|T\| \prod_{i=1}^2 \left\| \left(\sum_{k_i=1}^{n} |e_{k_i}|^r\right)^{1/r} \right\|_{\ell^\infty_n} = k_{\vec{\infty},\infty}(r) \|T\|.
$$
Thus, we  obtain
$
 \left(\sum_{k_1, k_2=1}^\infty |T(e_{k_1}, e_{k_2})|^r\right)^{1/r} \leq k_{\vec{\infty},\infty}(r) \|T\|
$
for every bilinear form $T\colon c_0\times c_0 \to \R$. The optimality in Littlewood's $4/3$ inequality implies that $r\ge4/3$.
\end{remark}

The next proposition generalizes the previous remark showing that, in contrast to the linear case where $k_{q, \infty}(r) =1$ for all $1 \leq q, r \leq \infty$, in the $m$-linear case $k_{\q, \infty}(r)=\infty$ for appropriate choices of $m \in \N$ and $1 \leq q_1, \dots, q_m,r \leq \infty$. In particular, we see that $k_{q_i,p}(r)<\infty$ for all $1 \leq i\leq m$ does not imply, in general, $k_{\q,p}(r)<\infty$. It should be noted that, just as in Remark~\ref{rem-lit} the optimality of Littlewood's $4/3$ inequality was used, Proposition~\ref{proposicion caso infinito} is related to the optimality of some of its  multilinear versions \cite{DefSev}, such as the Bohnenblust-Hille inequality \cite{BoHi} (see also \cite{Dav,Kai}). In fact, our proof follows the spirit of the new approaches to these inequalities by means of probabilistic method \cite{Que} (see Lemma~\ref{KSZ ineq}).

\begin{proposition}\label{proposicion caso infinito}
Let $1 \leq q_1, \dots, q_m \leq \infty$ and $\qm = \max\{q_1, \dots, q_m\}$. Then,  for
\begin{equation}\label{eq-casoinfinito}1 \le r  <  m \left( \frac{1}{\max(\qm',2)} + \frac{1}{\min(q_1,2)}+\cdots +\frac{1}{\min(q_m,2)}\right)^{-1}\end{equation}
we have $k_{\q, \infty}(r) = \infty$, while  $k_{q_i, \infty}(r)=1$, $1 \leq i \leq m$.
\end{proposition}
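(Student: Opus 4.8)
The plan is to treat the two assertions separately. The equalities $k_{q_i,\infty}(r)=1$ for each $i$ are immediate from Theorem~\ref{constantes kqp para todos los triples}(i), since the target space has exponent $p=\infty$; thus all the content lies in proving $k_{\q,\infty}(r)=\infty$ in the stated range. Following the mechanism of Remark~\ref{rem-lit}, I would first reduce this to the failure of a scalar multilinear-coefficient inequality. Take $\nu$ a single atom, so that $L^\infty(\nu)=\R$, let each $\mu_i$ be counting measure on $\{1,\dots,n\}$ (so $L^{q_i}(\mu_i)=\ell^{q_i}_n$), and test \eqref{multilinear MZ property} on scalar $m$-linear forms $T\colon \ell^{q_1}_n\times\cdots\times\ell^{q_m}_n\to\R$ with the canonical basis vectors $f^i_{k_i}=e_{k_i}$. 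Since $\big(\sum_{k_i=1}^n|e_{k_i}|^r\big)^{1/r}=(1,\dots,1)$ has $\ell^{q_i}_n$-norm equal to $n^{1/q_i}$, a finite $C=k_{\q,\infty}(r)$ would give, for every such $T$,
\begin{equation*}
\Big(\sum_{k_1,\dots,k_m}|T(e_{k_1},\dots,e_{k_m})|^r\Big)^{1/r}\le C\,\|T\|\,\prod_{i=1}^m n^{1/q_i}.
\end{equation*}

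Next I would feed into this the Kahane--Salem--Zygmund estimate of Lemma~\ref{KSZ ineq}: by the probabilistic method there is a choice of signs $\epsilon_{\mathbf k}=\pm1$ for which $T=\sum_{\mathbf k}\epsilon_{\mathbf k}\,e^*_{k_1}\otimes\cdots\otimes e^*_{k_m}$ has norm $\|T\|\lesssim_m n^{\beta}$ with $\beta=\frac1{\max(\qm',2)}+\sum_{i=1}^m\big(\frac12-\frac1{\max(q_i,2)}\big)$. For this $T$ all coefficients are unimodular, so the left-hand side above equals $(n^m)^{1/r}=n^{m/r}$, and we get $n^{m/r}\lesssim n^{\beta+\sum_i 1/q_i}$ for all $n$. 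Letting $n\to\infty$ forces $\frac mr\le \beta+\sum_{i=1}^m\frac1{q_i}$. The elementary identity $\frac1{q_i}=\frac1{\min(q_i,2)}+\frac1{\max(q_i,2)}-\frac12$ then collapses the exponent to $\beta+\sum_i\frac1{q_i}=\frac1{\max(\qm',2)}+\sum_{i=1}^m\frac1{\min(q_i,2)}=:D$, the quantity appearing (as $mD^{-1}$) on the right of \eqref{eq-casoinfinito}. Hence a finite constant forces $r\ge m/D$, and contrapositively $k_{\q,\infty}(r)=\infty$ whenever $r<m/D$, which is precisely the range in \eqref{eq-casoinfinito}.

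The genuine work, and the main obstacle, is the Kahane--Salem--Zygmund estimate itself: producing a unimodular array whose form is small on $\prod_i B_{\ell^{q_i}_n}$ with the sharp exponent $\beta$. I would obtain it via the probabilistic method, replacing the signs by a Gaussian (or Rademacher) array. For fixed $(x^1,\dots,x^m)$ the variable $T(x^1,\dots,x^m)$ is subgaussian with variance $\prod_i\|x^i\|_2^2$, and one has $\|x^i\|_2\le n^{1/2-1/\max(q_i,2)}$ on $B_{\ell^{q_i}_n}$; a chaining/entropy (Dudley-type) bound for the resulting process over the product of balls then controls $\mathbb{E}\|T\|$, and a realization attaining the expectation furnishes the desired sign array. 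The delicate point is to carry out the entropy estimate so as to obtain precisely the $\beta$ above rather than the coarser exponent $\frac{m+1}{2}-\sum_i\frac1{\max(q_i,2)}$ that a single global net yields; the improvement, concentrated in the term $\frac1{\max(\qm',2)}$ attached to the largest exponent $\qm$, exploits the favourable geometry of the balls $B_{\ell^{q_i}_n}$ with $q_i\le 2$ (whose extreme points are essentially signed basis vectors, so that those variables cannot inflate the exponent). This sharpness is exactly what pins down the range of $r$ and, as in Remark~\ref{rem-lit}, reflects the optimality of the Bohnenblust--Hille inequality and its multilinear relatives.
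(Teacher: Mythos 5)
Your reduction and your closing arithmetic are correct, and they are essentially the paper's own argument: the paper also tests \eqref{multilinear MZ property} on canonical basis vectors against a random-sign array (its $T$ takes values in $\ell^\infty_n$ rather than in $\R$, which is an equivalent formulation, since the norm of an $\ell^\infty_n$-valued multilinear map is the maximum of the norms of its scalar coordinate forms), and your identity $\beta+\sum_{i}\frac{1}{q_i}=\frac{1}{\max(\qm',2)}+\sum_{i}\frac{1}{\min(q_i,2)}$ is exactly the computation behind \eqref{acotacion q por 1}. The claim $k_{q_i,\infty}(r)=1$ is, as you say, immediate from Theorem~\ref{constantes kqp para todos los triples}(i).

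The genuine gap is where you yourself locate the main obstacle: the Kahane--Salem--Zygmund estimate with the exponent $\beta$, which you sketch but do not prove. Your chaining argument fails precisely in the regime where the sharp exponent matters, namely when all $q_i<2$ and $\qm>1$: there $\|x^i\|_2\le 1$ on each $B_{\ell^{q_i}_n}$, so the subgaussian/net bound yields only $n^{1/2}$, while $\beta=1/\qm'<1/2$ is required; with the coarser exponent your final inequality rules out only $r<m\bigl(\tfrac12+\sum_i\tfrac{1}{q_i}\bigr)^{-1}$, a strictly smaller range than \eqref{eq-casoinfinito}. Your proposed repair via extreme points is not available either: for $1<q_i<2$ the ball $B_{\ell^{q_i}_n}$ is strictly convex, so \emph{every} point of the unit sphere is extreme (only $q_i=1$ has signed basis vectors as extreme points), and this regime is non-vacuous --- for instance $m=2$, $q_1=q_2=3/2$ gives the non-empty range $1\le r<6/5$. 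The paper closes exactly this case by interpolation rather than chaining: the only probabilistic ingredient is Boas's theorem, which produces a $\pm1$-coefficient $(m+1)$-linear form on $\ell^{2}_n \times \cdots \times \ell^{2}_n$ of norm $\prec n^{1/2}$; since any $\pm1$-coefficient form trivially has norm at most $1$ on $\ell^{1}_n \times \cdots \times \ell^{1}_n$, multilinear interpolation (with $\theta=2/\qm'$) gives norm $\prec n^{1/\qm'}$ on $\ell^{\qm}_n \times \cdots \times \ell^{\qm}_n$, hence on $\ell^{q_1}_n \times \cdots \times \ell^{q_{m+1}}_n$ by inclusion of balls; the exponents $q_i>2$ are handled by composing with the identities $id^n_{q_i,2}$ of norm $n^{1/2-1/q_i}$, and $p=\infty$ contributes only the harmless factor $\|id^n_{1,2}\|=1$. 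If you replace your entropy step by this interpolation argument --- or simply invoke Lemma~\ref{KSZ ineq} as stated and take one coordinate of the map it produces --- your proof becomes complete and coincides with the paper's.
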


\begin{remark}\rm It is easy to check that the expression in the right hand side of \eqref{eq-casoinfinito} is smaller than $\min\{\qm, 2\}$. On the other hand,  since $k_{\q,\infty}(\min\{\qm, 2\})<\infty$ and $k_{\q,\infty}(\infty)=1$ (see Theorem~\ref{main theorem} (ib) and Remark~\ref{remark after monotonicity}), by Corollary~\ref{interpolation property of r} below  we have $k_{\q,\infty}(r)< \infty$ for $\min\{\qm, 2\} \leq r \leq \infty$. This brings up the question on the behaviour of $k_{\q,\infty}(r)$ for the remaining range of values of $r$.  Although the multilinear versions of Littlewood inequalities are understood for the full range of exponents, we were not be able to fill the gap for $r$ in our results.

\end{remark}

As the nature of Marcinkiewicz-Zygmund inequalities is related to the study of $\ell^r$-valued extensions of bounded multilinear operators on $L^p$-spaces, a brief review of some of the inequalities involving Theorem~\ref{main theorem} allows us to obtain vector-valued estimates for multilinear singular integrals. For instance, we prove that if
$1 < q_1, \dots, q_m <r < 2$ and $p>0$ are such that
$
\frac{1}{p}=\frac{1}{q_1}+\cdots+\frac{1}{q_m}
$
and $\vec w=(w_1,\dots,w_m)$ satisfies the multilinear $A_{\q}$ condition defined in~ \cite{LOPTT}, then there exists a constant $C>0$ such that for every Calder\'on-Zygmund multilinear operator $T$ we have
$$
\left\|  \left( \sum_{k_1, \dots, k_m} |T(f^1_{k_1}, \dots, f^m_{k_m})|^r \right)^{\frac{1}{r}} \right\|_{L^p(\nu_{\vec w})} \leq C \|T\| \prod_{i=1}^m \left\| \left( \sum_{k_i} |f^i_{k_i}|^{r} \right)^{\frac{1}{r}} \right\|_{L^{q_i}(w_i)},
$$
where $\nu_{\vec w}=\prod_{i=1}^m w_i^{p/q_i}$. We address the expected weak type estimates when some of the exponents $q_i$ are equal to one.
We compare our results with some known vector-valued estimates obtained in \cite{BenMus, BenMus2, CruMarPer, GraMar}.

Finally, we present the multilinear version of \cite[Chapter V, Thm. 1.12]{GarRub}, which states that if $T\colon L^q(\mu) \to L^p(\nu)$ is a positive linear operator (that is, $f\geq 0$ implies $T(f)\geq 0$), then the Marcinkiewicz-Zygmund inequality \eqref{MZ property} holds for all $1 \leq r \leq \infty$, with $C=1$.

\begin{proposition}\label{MZ positive}
Let $0< p, q_1, \dots, q_m \leq \infty$, $1 \leq r \leq \infty$ and $T\colon L^{q_1}(\mu_1)\times \cdots L^{q_m}(\mu_m) \to L^p(\nu)$ be a positive multilinear operator. Then
\begin{equation}\label{ineq MZ positive}
\left\|\left(\sum_{k_1, \dots, k_m} |T(f_{k_1}^1, \dots, f_{k_m}^m)|^r\right)^{1/r}\right\|_{L^p(\nu)} \leq  \|T\| \prod_{i=1}^m \left\| \left(\sum_{k_i=1}^{n_i} |f_{k_i}^i|^r\right)^{1/r} \right\|_{L^{q_i}(\mu_i)}
\end{equation}
for any choice of functions $\{f_{k_i}^i\}_{k_i=1}^{n_i} \subset L^{q_i}(\mu_i)$, $1\leq i \leq m$.
\end{proposition}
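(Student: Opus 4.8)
The plan is to reduce everything to the single pointwise domination
$$
\Big(\sum_{k_1,\dots,k_m}|T(f^1_{k_1},\dots,f^m_{k_m})|^r\Big)^{1/r}\ \le\ T(\phi_1,\dots,\phi_m)\qquad \nu\text{-a.e.},
$$
where $\phi_i:=\big(\sum_{k_i}|f^i_{k_i}|^r\big)^{1/r}\in L^{q_i}(\mu_i)$, and then simply integrate. Indeed, once this is proved, inequality \eqref{ineq MZ positive} is immediate: the right-hand side is a nonnegative function, so by monotonicity of the (quasi-)norm $\|\cdot\|_{L^p(\nu)}$ together with the boundedness of $T$ one gets $\big\|(\sum|T(\dots)|^r)^{1/r}\big\|_{L^p(\nu)}\le \|T(\phi_1,\dots,\phi_m)\|_{L^p(\nu)}\le \|T\|\prod_{i}\|\phi_i\|_{L^{q_i}(\mu_i)}$. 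This final step uses nothing about $p$ beyond monotonicity of $\|\cdot\|_{L^p(\nu)}$, so the argument covers the full range $0<p\le\infty$, and the case $r=\infty$ is handled with the usual modification (suprema in place of $\ell^r$ sums).

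The pointwise estimate rests on two ingredients. First, for a positive $m$-linear $T$ one has the \emph{modulus inequality} $|T(g_1,\dots,g_m)|\le T(|g_1|,\dots,|g_m|)$: writing $g_i=g_i^+-g_i^-$ and expanding by multilinearity, $T(g_1,\dots,g_m)$ becomes an alternating sum of the $2^m$ nonnegative terms $T(g_1^{\varepsilon_1},\dots,g_m^{\varepsilon_m})$, $\varepsilon_i\in\{+,-\}$, whose unsigned sum is exactly $T(|g_1|,\dots,|g_m|)$. Since replacing each $f^i_{k_i}$ by $|f^i_{k_i}|$ does not change $\phi_i$ and can only increase the left-hand side, \emph{I may assume throughout that all functions $f^i_{k_i}$ are nonnegative} (so that each $T(f^1_{k_1},\dots,f^m_{k_m})$ is nonnegative). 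Second, I use the single-variable pointwise inequality: if $S\colon L^{q}(\mu)\to L^{p}(\nu)$ is positive and linear and $0\le h_1,\dots,h_n$, then $\big(\sum_k (Sh_k)^r\big)^{1/r}\le S\big((\sum_k h_k^r)^{1/r}\big)$ $\nu$-a.e. This is the pointwise form underlying \cite[Chapter V, Thm.~1.12]{GarRub}: writing $\big(\sum_k (Sh_k)(\omega)^r\big)^{1/r}=\sup_\beta \sum_k\beta_k (Sh_k)(\omega)=\sup_\beta S\big(\sum_k\beta_k h_k\big)(\omega)$ over $\beta$ in a countable dense subset of the unit ball of $\ell^{r'}_n$, one bounds $\sum_k\beta_k h_k\le(\sum_k h_k^r)^{1/r}$ pointwise by H\"older and concludes by monotonicity of $S$; the countable dense set makes the exceptional null set uniform in $\beta$.

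The pointwise domination is then obtained by peeling off the indices one at a time. Freezing $k_1,\dots,k_{m-1}$ and applying the linear inequality to the positive linear operator $h\mapsto T(f^1_{k_1},\dots,f^{m-1}_{k_{m-1}},h)$ gives $\big(\sum_{k_m}T(f^1_{k_1},\dots,f^{m-1}_{k_{m-1}},f^m_{k_m})^r\big)^{1/r}\le T(f^1_{k_1},\dots,f^{m-1}_{k_{m-1}},\phi_m)$ $\nu$-a.e. Substituting this into the iterated $\ell^r(\N^m)$ sum $\sum_{k_1,\dots,k_m}=\sum_{k_1,\dots,k_{m-1}}\big[\sum_{k_m}\big]$ and repeating with the operator $h\mapsto T(f^1_{k_1},\dots,f^{m-2}_{k_{m-2}},h,\phi_m)$, and so on down to the first slot, the estimate telescopes to $\big(\sum_{k_1,\dots,k_m}T(\dots)^r\big)^{1/r}\le T(\phi_1,\dots,\phi_m)$ $\nu$-a.e., which is what was wanted.

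The crux of the argument, and the reason the modulus reduction must be carried out \emph{first}, is that positivity of a multilinear map yields monotonicity in a single variable only when the remaining arguments are nonnegative: the linear operator $h\mapsto T(\dots,h,\dots)$ is positive precisely because the frozen entries (the $f^j_{k_j}$ at the start, and the $\phi_j$ produced at later stages) are nonnegative. Guaranteeing that every frozen slot stays nonnegative throughout the peeling is exactly what the initial reduction provides, and it is the only genuinely multilinear subtlety here; the remaining steps are the standard duality and monotonicity arguments of the linear theory. I expect the only points requiring care to be this bookkeeping of nonnegativity across the iteration and the routine measure-theoretic justification of the a.e.\ supremum over the unit ball of $\ell^{r'}$.
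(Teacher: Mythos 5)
Your proposal is correct and takes essentially the same route as the paper's proof: both reduce the proposition to the pointwise domination $\bigl(\sum_{k_1,\dots,k_m}|T(f^1_{k_1},\dots,f^m_{k_m})|^r\bigr)^{1/r}\le T(\phi_1,\dots,\phi_m)$ $\nu$-a.e.\ (with $\phi_i=(\sum_{k_i}|f^i_{k_i}|^r)^{1/r}$), and both establish that estimate by peeling off one index at a time using $\ell^r$--$\ell^{r'}$ duality, H\"older's inequality and positivity. The only differences are organizational: you reduce to nonnegative inputs first and then iterate a self-contained linear lemma (also proving the modulus inequality and the $m=1$ case, which the paper respectively asserts without proof and cites from \cite{GarRub}), whereas the paper keeps signed functions and runs an induction on $m$ whose inductive step carries out the duality--H\"older computation directly on the full multi-index sum.
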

As an immediate consequence, we obtain the estimate
$$
\left\|\left(\sum_{k_1, k_2} |f_{k_1}^1 * f_{k_2}^2|^r\right)^{1/r}\right\|_{L^p(\R)} \leq  \left\| \left(\sum_{k_1=1}^{n_1} |f_{k_1}^1|^r\right)^{1/r} \right\|_{L^{q_1}(\R)} \left\| \left(\sum_{k_2=1}^{n_2} |f_{k_2}^2|^r\right)^{1/r} \right\|_{L^{q_2}(\R)}
$$
for the bilinear convolution operator, whenever $\frac{1}{q_1} + \frac{1}{q_2} = \frac{1}{p} + 1$.

\subsection*{Structure of the article} The article is organized as follows. In Section~\ref{propiedades de las k} we prove the mentioned properties of the constants $k_{\q,p}(r)$. The inequality $k_{q_1;p}(r) \cdots k_{q_m;p}(r) \leq k_{\q;p}(r)$ is almost immediate, while the equality in the case $p=r$ follows by induction. For the monotonicity of $k_{\q,p}(r)$ as a function of $p$ we use a duality argument. The monotonicity in $r$ makes use of $r$-stable L\'evy measures and a generalization of some arguments in \cite{GasMal}. We include in this section a kind of interpolation property of $k_{\q,p}(r)$ as a function of $r$ (see Corollary~\ref{interpolation property of r}) and we also prove that $k_{\q,p}(r)=1$ when $\q= (1, \dots, 1)$.
Our main results are given in Section~\ref{section main results}, where we determine conditions on $\q,p$ and $r$ so that $k_{\q,p}(r) < \infty$ and we obtain (in many cases) the exact value of these constants.
In Theorem~\ref{caracterizacion caso menor 2} we focus on the case $1\leq q_1, \dots, q_m \leq 2$, $1 \leq p < \infty$. The proof of this theorem combines some classical arguments with properties from Section~\ref{propiedades de las k} and the values of the constants $k_{q_i,p}(r)$ obtained in \cite{DefJun}. In Section~\ref{section main theorem} we include the proof of Theorem~\ref{main theorem}, which at that point is just a combination of the results previously obtained.
In Section~\ref{caso p infinito} we treat the case $p=\infty$, showing (see Proposition~\ref{proposicion caso infinito}) that the behavior of $k_{\q,\infty}(r)$ is very different from that of $k_{q_i, \infty}(r)$. The proof relies on a multilinear version of a Kahane-Salem-Zygmund inequality.
In Section~\ref{applications} we discuss some applications of the Marcinkiewicz-Zygmund inequalities. We deal with weighted vector-valued inequalities for multilinear Calder\'on-Zygmund operators
in Section~\ref{vector-valued CZ operators} and, in Section~\ref{section MZ positive}, we prove Proposition~\ref{MZ positive}, which gives vector-valued inequalities for positive multilinear operators (such as the convolution).

\subsection*{Notation} All the Banach spaces considered are real and all the measure spaces $(\Omega, \mu)$ are $\sigma$-finite. As usual, given a measure space $(\Omega, \mu)$ the space of measurable functions $f\colon \Omega \to \R$ such that $\int_{\Omega}|f(\omega)|^p\, d\mu(\omega) < \infty$ is denoted by $L^p(\mu)=L^p(\Omega; d\mu)$ (we omit the set $\Omega$ which will be clear by context).
Given a Banach space $X$, we denote by $L^p(\mu, X)$ the vector space of $X$-valued $\mu$-measurable functions $f \colon \Omega \to X$ such that $\|f(\cdot)\|_{X}^p$ is $\mu$-integrable. When $1 \leq p \leq \infty$, this is a Banach space with the natural norm
$
\|f\|_{L^p(\mu, X)} = \left(\int \|f(\omega)\|_{X}^p d\mu(\omega)\right)^{1/p}
$
(the case $p=\infty$ is defined analogously).
Following the standard notation, we denote by $\ell^p$ the space of sequences $(a_k)_{k\in \N}$ in $\R$ such that $\|(a_k)_k\|_{\ell^p} = \left( \sum_k  |a_k|^p\right)^{1/p} < \infty$ and $\ell^p_n = (\R^n, \|\cdot\|_{\ell^p})$, with the usual modification when $p=\infty$.

\section{Basic properties of $k_{\q,p}(r)$}\label{propiedades de las k}

\subsection{Relation between the linear and multilinear constants}

Before proving Proposition~\ref{relacion lineal-multilineal} we state  the following easy remark, whose proof is omitted. Recall that if $(\Omega_1, \nu_1),\dots, (\Omega_m, \nu_m)$ are measure spaces, then $\nu_1 \otimes \cdots \otimes \nu_m$ denotes the product measure on $\Omega_1 \times \cdots \times \Omega_m$.
\begin{remark} \label{bilineal}
Let $T_i \colon L^{q_i}(\mu_i) \to L^p(\nu_i)$ be bounded  operators ($i=1,\dots, m$) and let $T \colon L^{q_1}(\mu_1) \times \cdots \times L^{q_m}(\mu_m) \to L^p(\nu_1 \otimes \cdots \otimes \nu_m)$ be the $m$-linear operator defined as
$
T(f^1, \dots, f^m)= T_1(f^1) \otimes \cdots \otimes T_m(f^m),
$
, i.e.,
$$T(f^1, \dots, f^m)(\omega_1, \dots, \omega_m)=T_1(f^1)(\omega_1) \cdots T_m(f^m)(\omega_m).$$ Then $\|T\|=\|T_1\| \cdots \|T_m\|$ and
\begin{equation}\label{relacion normas p}
\left\| \left(\sum_{k_1, \dots, k_m=1}^n |T(f_{k_1}^1, \dots, f_{k_m}^m)|^r\right)^{1/r} \right\|_{L^p(\nu_1 \otimes \cdots \otimes \nu_m)} = \prod_{i=1}^m \left\|\left(\sum_{k_i=1}^n |T_i(f_{k_i}^i)|^r\right)^{1/r}\right\|_{L^p(\nu_i)}
\end{equation}
for each $n \in \N$ and $\{f_{k_i}^i\}_{k_i=1}^n \subset L^{q_i}(\mu_i)$.
\end{remark}

\begin{proof}[Proof of Proposition~\ref{relacion lineal-multilineal}]
It is clear that $k_{q_1;p}^{(n)}(r) \cdots k_{q_m;p}^{(n)}(r)$ is the infimum of all the constants $C\geq 0$ satisfying
\begin{equation*}
\prod_{i=1}^m \left\|\left(\sum_{k_i=1}^n |T_i(f_{k_i}^i)|^r\right)^{1/r}\right\|_{L^p(\nu_i)} \leq C \prod_{i=1}^m \|T_i\| \left\| \left(\sum_{k_i=1}^n |f_{k_i}^i|^r\right)^{1/r} \right\|_{L^{q_i}(\mu_i)}
\end{equation*}
for every $T_i \colon L^{q_i}(\mu_i) \to L^p(\nu_i)$
and all functions $\{f_{k_i}^i\}_{k_i=1}^n \subset L^{q_i}(\mu_i)$ ($1 \leq i \leq  m$). By the previous remark, we see that $k_{q_1;p}^{(n)}(r) \cdots k_{q_m;p}^{(n)}(r)$ is the infimum of all the constants $C\geq 0$ satisfying
\begin{equation}\label{MZ bilineal}
\left\| \left(\sum_{k_1, \dots, k_m=1}^n |T(f_{k_1}^1, \dots, f_{k_m}^m)|^r\right)^{1/r} \right\|_{L^p(\nu_1 \otimes \cdots \otimes \nu_m)} \leq C \|T\| \prod_{i=1}^m \left\| \left(\sum_{k_i=1}^n |f_{k_i}^i|^r\right)^{1/r} \right\|_{L^{q_i}(\mu_i)}
\end{equation}
for every $m$-linear operator $T\colon L^{q_1}(\mu_1) \times \cdots \times L^{q_m}(\mu_m) \to L^p(\nu_1 \otimes \cdots \otimes \nu_m)$ of the form $T(f^1, \cdots, f^m)=T_1(f^1)\otimes \cdots \otimes T_m(f^m)$ and
functions $\{f_{k_i}^i\}_{k_i=1}^n \subset L_{q_i}(\mu_i)$. This shows that the constant $k_{\q,p}^{(n)}(r)$, which is the infimum of all the constants satisfying \eqref{MZ bilineal} for every $m$-linear operator from $L^{q_1}(\mu_1) \times \cdots \times L^{q_m}(\mu_m)$ to $L^p(\nu)$,
is greater than or equal to $k_{q_1;p}^{(n)}(r) \cdots k_{q_m;p}^{(n)}(r)$.

To prove equality when $p=r$, we reason by induction on $m$ (see, for instance, \cite[Thm. 3.1]{BomPerVil} for a similar argument).
For $m=1$ the result is trivial, then let $m \geq  2$ and suppose that the result holds for $m-1$. We write the proof for $1 \leq r<\infty$, the case $r=\infty$ being analogous.
Fix $T\colon L^{q_1}(\mu_1) \times \cdots \times L^{q_m}(\mu_m) \to L^r(\nu)$, $n \in \N$ and $\{f_{k_i}^i\}_{k_i=1}^{n} \subset L^{q_i}(\mu_i)$. Our goal is to prove
\begin{equation*}
\left\| \left(\sum_{k_1, \dots, k_m} |T(f_{k_1}^1, \dots, f_{k_m}^m)|^r\right)^{1/r} \right\|_{L^r(\nu)} \leq k_{q_1,r}^{(n)}(r) \cdots k_{q_m,r}^{(n)}(r) \|T\| \prod_{i=1}^m \left\| \left(\sum_{k_i=1}^{n} |f_{k_i}^i|^r\right)^{1/r} \right\|_{L^{q_i}(\mu_i)},
\end{equation*}
which would give the remaining inequality $ k_{\q,r}^{(n)}(r) \leq k_{q_1,r}^{(n)}(r) \cdots k_{q_m,r}^{(n)}(r)$. For each $2\leq i \leq m$ let $\nu_i$ be the counting measure on $\N$ and
define $S\colon L^{q_1}(\mu_1) \to L^r(\nu \otimes \nu_2 \otimes \cdots \otimes \nu_m)$ by
$$
S(f^1)(\omega, k_2, \dots, k_m)=
 \left\{
\begin{array} {c l}
T(f^1, f_{k_2}^2, \dots, f_{k_m}^m)(\omega) & \text{if $k_2, \dots, k_m \leq n$,}\\
0  & \text{otherwise.}
\end{array}
\right .
$$
It is easy to check that
\begin{equation}\label{relacion S y T}
\left\| \left( \sum_{k_1=1}^n |S(f_{k_1}^1)|^r \right)^{1/r} \right\|_{L^r(\nu \otimes \nu_2 \otimes \cdots \otimes \nu_m)} = \left\| \left( \sum_{k_1, \dots, k_m=1}^n |T(f_{k_1}^1, \dots, f_{k_m}^m)|^r \right)^{1/r}\right\|_{L^r(\nu)}.
\end{equation}
Now, on the one hand we have
\begin{equation}\label{MZ for S and n=1}
 \left\| \left( \sum_{k_1=1}^n |S(f_{k_1}^1)|^r \right)^{1/r} \right\|_{L^r(\nu \otimes \nu_2 \otimes \cdots \otimes \nu_m)} \leq k_{q_1,r}^{(n)}(r) \|S\| \left\| \left( \sum_{k_1=1}^n |f_{k_1}^1| \right)^{1/r} \right\|_{L^{q_1}(\mu_1)}
\end{equation}
and on the other hand, if we call $T_{f^1}(f^2, \dots, f^m) = T(f^1,\dots, f^m)$ and $\q_{2, \dots, m}=(q_2, \dots, q_m)$, then
\begin{eqnarray*}
\|S(f^1)\|_{L^r(\nu \otimes \nu_2 \otimes \cdots \otimes \nu_m)}&=& \left( \int \sum_{k_2, \dots, k_m=1}^n |T_{f^1}(f_{k_2}^2, \dots, f_{k_m}^m)(\omega)|^r \, d\nu(\omega) \right)^{1/r}\\
&\leq& k_{\q_{2, \dots, m}, r}^{(n)}(r) \|T_{f^1}\| \prod_{i=2}^m \left\| \left( \sum_{k_i=1}^n |f_{k_i}^i| \right)^{1/r} \right\|_{L^{q_i}(\mu_i)}\\
&\leq&  k_{\q_{2, \dots, m}, r}^{(n)}(r) \|T\| \|f^1\|_{L^{q_1}(\mu_1)} \prod_{i=2}^m \left\| \left( \sum_{k_i=1}^n |f_{k_i}^i| \right)^{1/r} \right\|_{L^{q_i}(\mu_i)},
\end{eqnarray*}
from where we deduce
\begin{equation}\label{norma de S}
\|S\| \leq  k_{\q_{2, \dots, m}, r}^{(n)}(r) \|T\| \prod_{i=2}^m \left\| \left( \sum_{k_i=1}^n |f_{k_i}^i| \right)^{1/r} \right\|_{L^{q_i}(\mu_i)}.
\end{equation}
Putting  \eqref{relacion S y T}, \eqref{MZ for S and n=1} and \eqref{norma de S} together we obtain $k_{\q,r}^{(n)}(r) \leq k_{q_1,r}^{(n)}(r) k_{\q_{2, \dots, m}, r}^{(n)}(r)$. By the induction hypothesis we have $k_{\q_{2, \dots, m}, r}^{(n)}(r) \leq k_{q_2,r}^{(n)}(r) \cdots k_{q_m,r}^{(n)}(r)$ and this proves the statement.
\end{proof}

If $T_1\colon L^{q_1}(\mu_1) \to L^p(\nu_1)$ and $T_2\colon L^{q_2}(\mu_2) \times \cdots \times L^{q_m}(\mu_m) \to L^p(\nu_2)$, then $$T(f^1, \cdots, f^m)(\omega_1, \omega_2) = T_1(f^1)(\omega_1) T_2(f^2, \dots, f^m)(\omega_2)$$ is an $m$-linear operator satisfying $\|T\|=\|T_1\| \|T_2\|$ and an equality analogous to \eqref{relacion normas p}. Then, reasoning as in the proof of Proposition~\ref{relacion lineal-multilineal}, we see that if $(\q,p,r)$ satisfies the ($m$-linear) Marcinkiewicz-Zygmund inequality, then $((q_2,\dots,q_m),p,r)$ satisfies the ($(m-1)$-linear) Marcinkiewicz-Zygmund inequality. Clearly, the same reasoning  applies with any $q_i$ instead of $q_1$. Then, we have the following.
\begin{remark}
Let $1 \leq p,q_1, \dots, q_m, r \leq \infty$. Then,
$$
k_{q_j,p}(r) k_{(q_2, \dots,q_{j-1},q_{j+1},\dots,q_m),p}(r) \leq k_{(q_1, \dots, q_m),p}(r),
$$
for $j=1,\dots,m$.
\end{remark}
Applying Theorem~\ref{constantes kqp para todos los triples} and Proposition~\ref{relacion lineal-multilineal} for the case $p=r$, we see that $k_{\q,r}(r) < \infty$ if and only if $r=2$ or $q_i \leq r$ for all $1 \leq i \leq m$. Moreover, if $q_i \leq r$ for all $1 \leq i \leq m$ then $k_{\q,r}(r)=1$. This, together with the monotonicity in $p$ (see Proposition~\ref{monotonia en p y r}), gives the following result.
\begin{remark}\label{remark after monotonicity}
If $1 \leq q_1, \dots, q_m \leq r \leq p \leq \infty$ then $k_{\q,p}(r) =1$.
\end{remark}

\subsection{Monotonicity in $p$}
Next, we prove the property of monotonicity of $k_{\q,p}(r)$ as a function of $p$ stated in Proposition~\ref{monotonia en p y r} (i). The proof follows a duality argument analogous to that of \cite[Thm. 6]{GraMar}.

\begin{proof}[Proof of monotonicity in $p$]
Let $T \colon L^{q_1}(\mu_1) \times \cdots \times L^{q_m}(\mu_m) \to L^{p_1}(\nu)$ be an $m$-linear operator and $\{f_{k_i}^i\}_{k_i=1}^n \subset L^{q_i}(\mu_i)$. Assume $1 \leq r < \infty$, the case $r=\infty$ being completely analogous.
By duality we have
\begin{eqnarray}\label{dualidad}
&&\left\| \left(\sum_{k_1, \dots, k_m=1}^n |T(f_{k_1}^1, \dots, f_{k_m}^m)|^r\right)^{1/r} \right\|_{L^{p_1}(\nu)}\\
\nonumber &=& \sup_{\|g\|_{L^{(p_1/p_2)'}} \leq 1} \left\| \left(\sum_{k_1, \dots, k_m=1}^n |T(f_{k_1}^1, \dots, f_{k_m}^m)|^r\right)^{1/r} |g|^{1/p_2} \right\|_{L^{p_2}(\nu)}.
\end{eqnarray}
Now, fixed $g \in L^{(p_1/p_2)'}(\nu)$ with norm at most 1, consider $T_g \colon L^{q_1}(\mu_1) \times \cdots \times L^{q_m}(\mu_m) \to L^{p_2}(\nu)$ defined by
$$
T_g(f^1, \dots, f^m) = |g|^{1/p_2} T(f^1, \dots, f^m)
$$
and note that \eqref{dualidad} gives
$$
\left\| \left(\sum_{k_1, \dots, k_m=1}^n |T(f_{k_1}^1, \dots, f_{k_m}^m)|^r\right)^{1/r} \right\|_{L^{p_1}(\nu)} = \sup_{\|g\|_{L^{(p_1/p_2)'}} \leq 1} \left\| \left(\sum_{k_1, \dots, k_m=1}^n |T_g(f_{k_1}^1, \dots, f_{k_m}^m)|^r\right)^{1/r} \right\|_{L^{p_2}(\nu)}.
$$
Since $\|T_g\| \leq \|T\|$ for every $g \in L^{(p_1/p_2)'}(\nu)$ of norm at most 1  (just apply H$\ddot{\text{o}}$lder's inequality), we obtain
$$
\left\| \left(\sum_{k_1, \dots, k_m=1}^n |T(f_{k_1}^1, \dots, f_{k_m}^m)|^r\right)^{1/r} \right\|_{L^{p_1}(\nu)} \leq k_{\q,p_2}^{(n)}(r) \|T\| \prod_{i=1}^m \left\| \left(\sum_{k_i=1}^n |f_{k_i}^i|^r\right)^{1/r} \right\|_{L^{q_i}(\mu_i)},
$$
which gives $k_{\q,p_1}^{(n)}(r) \leq k_{\q,p_2}^{(n)}(r)$.
\end{proof}

\subsection{Monotonicity and interpolation property in $r$}
For the proof of the monotonicity of $k_{\q,p}(r)$ as a function of $r$ (when $1\leq r \leq 2$), we make use of $r$-stable variables (or $r$-stable L\'evy measures). A random variable is said to be $r$-stable ($0< r\leq 2$) if its Fourier transform on $\R$ is equal to $e^{-|x|^r}$. The $s$-th moment of the $r$-stable variable $w$ is given by $c_{r,s}=\left(\int_{\R} |t|^s w(t) dt \right)^{1/s}$. Note that 2-stable variables are just Gaussian variables.  One importance of $r$-stable variables relies on the following well known property, whose proof can be found in \cite[21.1.3]{Pie}:
For $0<s<r<2$ or $r=2$ and $0<s<\infty$, if $\{w_k\}_{k}$ is a sequence of independent $r$-stable random variables defined on $[0,1]$, then
\begin{equation}\label{ecuacion r estables}
\left(\int_0^1 \left| \sum_{k=1}^n a_k w_k(t) \right|^s dt \right)^{1/s} = c_{r,s} \left( \sum_{k=1}^n |a_k|^r \right)^{1/r}
\end{equation}
for every sequence $\{a_k\}_k\subset \mathbb R$ and any $n \in \N$.

The following lemma will be a main tool to prove the desired monotonicity and will also be  useful  in Section~\ref{caso menores que 2}.

\begin{lemma} \label{lemma r-stable}
Let $0<r\leq 2$ and $0<p<\infty$. If $\{w_{k_1}\}_{k_1}, \dots, \{w_{k_m}\}_{k_m}$ are sequences of mutually independent $r$-stable random variables defined on  $[0,1]$, then
\begin{eqnarray}\label{desigualdad r estables}
&& C \left( \sum_{k_1, \dots, k_m=1}^n |a_{k_1, \dots, k_m}|^r \right)^{1/r}\\
\nonumber& \leq&  \left(\int_0^1 \cdots \int_0^1 \left| \sum_{k_1, \dots, k_m=1}^n a_{k_1, \dots, k_m} w_{k_1}(t_1) \cdots w_{k_m}(t_m) \right|^p dt_1 \cdots dt_m \right)^{1/p}
\end{eqnarray}
for every sequence $\{a_{k_1, \dots, k_m}\}_{k_1, \dots, k_m}\subset \mathbb R$ and any $n \in \N$, where $C=c_{r,p}^{m}$ if $0<p < r <2$ or $0<p\leq r=2$,
$C=c_{2,2}^{m}$ if $r=2 < p < \infty$ and $C=c_{r, s}^m$ if $r<2$ and $r\leq p$, for any $0<s<r$.
\end{lemma}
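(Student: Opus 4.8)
The plan is to prove the lemma in two stages: first establish, by induction on $m$, a single ``master'' lower bound valid whenever the exponent on the left is at most $r$ and lies in the admissible range of the one-dimensional identity \eqref{ecuacion r estables}; then deduce the three stated values of the constant $C$ by comparing $L^p$- and $L^s$-norms on the probability space $[0,1]^m$.

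Concretely, fix $0<s\le r\le 2$ so that \eqref{ecuacion r estables} applies to the pair $(r,s)$ (that is, $s<r$ if $r<2$, and any $s$ if $r=2$). I would prove
\[
\left(\int_{[0,1]^m}\Big|\sum_{k_1,\dots,k_m} a_{k_1,\dots,k_m}\,w_{k_1}(t_1)\cdots w_{k_m}(t_m)\Big|^s\,dt_1\cdots dt_m\right)^{1/s}\ \ge\ c_{r,s}^{\,m}\Big(\sum_{k_1,\dots,k_m}|a_{k_1,\dots,k_m}|^r\Big)^{1/r}.
\]
For $m=1$ this is exactly \eqref{ecuacion r estables}. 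For the inductive step I would isolate the first variable, writing the inner sum as $\sum_{k_1} b_{k_1}(t_2,\dots,t_m)\,w_{k_1}(t_1)$ with $b_{k_1}(t_2,\dots,t_m)=\sum_{k_2,\dots,k_m}a_{k_1,\dots,k_m}\prod_{i\ge 2}w_{k_i}(t_i)$; applying \eqref{ecuacion r estables} in $t_1$ for almost every fixed $(t_2,\dots,t_m)$ and then integrating in the remaining variables yields $\int_{[0,1]^m}|\cdots|^s=c_{r,s}^{\,s}\int_{[0,1]^{m-1}}(\sum_{k_1}|b_{k_1}|^r)^{s/r}$.

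The decisive step is the interchange of the $L^s$- and $\ell^r$-norms. Since $s/r\le 1$, the reverse Minkowski inequality for exponents at most $1$, applied to the nonnegative functions $|b_{k_1}|^r$, gives the interchange in the \emph{favourable} direction, namely $\int_{[0,1]^{m-1}}(\sum_{k_1}|b_{k_1}|^r)^{s/r}\ge(\sum_{k_1}(\int_{[0,1]^{m-1}}|b_{k_1}|^s)^{r/s})^{s/r}$. Each $b_{k_1}$ is an $(m-1)$-fold expression of the same form with coefficients $(a_{k_1,k_2,\dots,k_m})_{k_2,\dots,k_m}$, so the induction hypothesis bounds $(\int|b_{k_1}|^s)^{1/s}$ below by $c_{r,s}^{\,m-1}(\sum_{k_2,\dots,k_m}|a_{k_1,\dots,k_m}|^r)^{1/r}$; summing over $k_1$ and collecting the two factors of $c_{r,s}$ produces the master bound with exponent $c_{r,s}^{\,m}$.

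With the master bound available, the three values of $C$ follow by choosing $s$. When $0<p<r<2$ or $0<p\le r=2$ I take $s=p$ directly, getting $c_{r,p}^{\,m}$. When $r<2$ and $r\le p$, I fix any $0<s<r$ and use that $[0,1]^m$ has total mass one, so $\|F\|_{L^p}\ge\|F\|_{L^s}$; the master bound then yields $c_{r,s}^{\,m}$. When $r=2<p$ the same monotonicity reduces to $s=2=r$ and gives $c_{2,2}^{\,m}$. I expect the only genuine subtlety to be keeping track of the direction of the norm interchange: obtaining a \emph{lower} bound forces $s\le r$ (so that $s/r\le 1$ and the reverse Minkowski inequality is the correct tool), which is precisely why, for large $p$, one cannot use the exponent $p$ itself and must detour through a smaller $s$ via the probability-space monotonicity of $L^p$-norms, while all the time staying inside the admissible range $s<r$ (or $r=2$) of \eqref{ecuacion r estables}.
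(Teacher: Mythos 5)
Your proposal is correct and takes essentially the same approach as the paper: your ``master bound'' is precisely the paper's iterative argument (the identity \eqref{ecuacion r estables} combined with an interchange of the $\ell^r$-sum and the integral, which the paper invokes as the continuous Minkowski inequality with exponent $r/p\geq 1$ and you invoke as the reverse Minkowski inequality for exponents at most $1$ --- these are the same inequality), merely reorganized as an explicit induction on $m$. Your treatment of the cases $r\leq p$, passing to a smaller exponent $s$ and using monotonicity of $L^p$-norms on the probability space $[0,1]^m$, is also exactly the paper's concluding step.
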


\begin{proof}
We begin with the cases  $0<p < r <2$ and $0<p\leq r=2$. Note that, by \eqref{ecuacion r estables}, we have
\begin{eqnarray}\label{1er igualdad lema}
\nonumber (\star)&:=& \left( \sum_{k_1, \dots, k_m} |a_{k_1, \dots, k_m}|^r \right)^{1/r} = \left( \sum_{k_1, \dots, k_{m-1}} \left(\sum_{k_m} |a_{k_1, \dots, k_m}|^r \right)^{\frac{1}{r} r} \right)^{1/r} \\
&=& c_{r,p}^{-1} \left( \sum_{k_1, \dots, k_{m-1}} \left( \int_0^1 \left| \sum_{k_m} a_{k_1, \dots, k_m} w_{k_m}(t_m) \right|^p dt_m \right)^{r/p}  \right)^{1/r}  .
\end{eqnarray}
Then, applying the continuous Minkowski inequality with $r/p\geq1$ we obtain
$$
\sum_{k_{m-1}}  \left( \int_0^1 \left| \sum_{k_m} a_{k_1, \dots, k_m} w_{k_m}(t_m) \right|^p dt_m \right)^{r/p} \leq  \left( \int_0^1 \left( \sum_{k_{m-1}} \left| \sum_{k_m} a_{k_1, \dots, k_m} w_{k_m}(t_m) \right|^{r} \right)^{p/r} dt_m \right)^{r/p},
$$
which together with \eqref{1er igualdad lema} gives
\begin{equation}\label{2da desigualdad lema}
(\star) \leq c_{r,p}^{-1}  \left( \sum_{k_1, \dots, k_{m-2}} \left( \int_0^1 \left( \sum_{k_{m-1}} \left| \sum_{k_m} a_{k_1, \dots, k_m} w_{k_m}(t_m) \right|^{r} \right)^{p/r} dt_m \right)^{r/p} \right)^{1/r}.
\end{equation}
Using  \eqref{ecuacion r estables} again, we see that
$$
 \left( \sum_{k_{m-1}} \left| \sum_{k_m} a_{k_1, \dots, k_m} w_{k_m}(t_m) \right|^{r} \right)^{p/r} = c_{r,p}^{-p} \int_0^1 \left| \sum_{k_{m-1}} \sum_{k_m} a_{k_1, \dots, k_m} w_{k_m}(t_m) w_{k_{m-1}}(t_{m-1})  \right|^p dt_{m-1}
$$
and, in virtue of \eqref{2da desigualdad lema},
$$
(\star) \leq c_{r,p}^{-2} \left( \sum_{k_1, \dots, k_{m-2}} \left( \int_0^1 \int_0^1  \left| \sum_{k_{m-1}, k_m} a_{k_1, \dots, k_m} w_{k_m}(t_m) w_{k_{m-1}}(t_{m-1})  \right|^p dt_{m-1} dt_m  \right)^{r/p} \right)^{1/r}.
$$
Repeating this argument, in $(m-1)$-steps we obtain
$$
(\star) \leq c_{r,p}^{-(m-1)} \left( \sum_{k_1} \left( \int_0^1 \cdots \int_0^1 \left| \sum_{k_2, \dots, k_m} a_{k_1, \dots, k_m} w_{k_m}(t_m) \cdots w_{k_{2}}(t_{2})  \right|^p dt_2 \dots dt_m \right)^{r/p} \right)^{1/r}
$$
and applying Minkowski's inequality (with $r/p\geq 1$) and \eqref{ecuacion r estables} we get
$$
(\star) \leq c_{r,p}^{-m}  \left(\int_0^1 \cdots \int_0^1 \left| \sum_{k_1, \dots, k_m} a_{k_1, \dots, k_m} w_{k_1}(t_1) \cdots w_{k_m}(t_m) \right|^p dt_1 \cdots dt_m \right)^{1/p},
$$
which proves the statement.

The remaining cases, where $p$ is greater than or equal to $r$, follow choosing  $s=r=2$  or $s<r<2$ and applying the previous cases to get
\begin{eqnarray*}
c_{r,s}^{m} \left( \sum_{k_1, \dots, k_m} |a_{k_1, \dots, k_m}|^r \right)^{1/r}
 &\le & \left\|  \sum_{k_1, \dots, k_m} a_{k_1, \dots, k_m} w_{k_1} \cdots w_{k_m}  \right\|_{L^s([0,1]^m)} \\
& \le &  \left\|  \sum_{k_1, \dots, k_m} a_{k_1, \dots, k_m} w_{k_1} \cdots w_{k_m}  \right\|_{L^p([0,1]^m)}. \qedhere
\end{eqnarray*}
\end{proof}

Another useful tool in the proof of the monotonicity in $r$, is the following  generalization to the multilinear setting of a result that can be found inside the proof of \cite[Thm. 1]{GasMal}.
\begin{lemma}\label{lema monotonia en r}
Let $1 \leq q_1, \dots, q_m, p\leq \infty$ and $1 \leq r < \infty$. Then
\begin{eqnarray}\label{desigualdad lema monotonia en r}
\left\| \left( \int_0^1 \cdots \int_0^1 \left| \sum_{k_1, \dots, k_m=1}^n T(f_{k_1}^1, \dots, f_{k_m}^m) g_{k_1}^1(t_1) \cdots g_{k_m}^m(t_m) \right|^r dt_1 \dots dt_m \right)^{1/r} \right\|_{L^p(\nu)} \\
\nonumber\leq k_{\q,p}(r) \|T\| \prod_{i=1}^m \left\| \left( \int_0^1 \left| \sum_{k_i=1}^n f_{k_i}^i g_{k_i}^i(t_i) \right|^r dt_i \right)^{1/r} \right\|_{L^{q_i}(\mu_i)}
\end{eqnarray}
for each bounded multilinear operator $T\colon L^{q_1}(\mu_1) \times \cdots \times L^{q_m}(\mu_m) \to L^p(\nu)$,
functions $\{f_{k_i}^i\}_{k_i=1}^{n} \subset L^{q_i}(\mu_i)$, $\{g_{k_i}^i\}_{k_i=1}^n \subset L^r[0,1]$ ($i=1, \dots, m$) and $n \in \N$.
\end{lemma}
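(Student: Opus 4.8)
The plan is to reduce the inequality to the defining Marcinkiewicz--Zygmund inequality \eqref{multilinear MZ property} by discretizing the auxiliary variables $t_1,\dots,t_m$, and then to recover the general case by an approximation argument. The key observation is that when each $g_{k_i}^i$ is a simple function the integrals over $[0,1]$ collapse into finite sums, and the multilinearity of $T$ lets us absorb the sizes of the partition cells directly into the arguments of $T$.

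First I would treat the case in which, for each fixed $i$, the functions $g_1^i,\dots,g_n^i$ are simple and constant on the cells $A_1^i,\dots,A_N^i$ of a common finite measurable partition of $[0,1]$ (which is always arranged by refining). Writing $g_{k_i,l}^i$ for the value of $g_{k_i}^i$ on $A_l^i$ and $a_l^i=|A_l^i|$, one has
$$
\int_0^1 \Big| \sum_{k_i} f_{k_i}^i g_{k_i}^i(t_i) \Big|^r dt_i = \sum_{l=1}^N a_l^i \Big| \sum_{k_i} f_{k_i}^i g_{k_i,l}^i \Big|^r,
$$
so that, setting $\tilde f_l^i := (a_l^i)^{1/r}\sum_{k_i} f_{k_i}^i g_{k_i,l}^i \in L^{q_i}(\mu_i)$, the $i$-th factor on the right-hand side of the lemma becomes exactly $\big\| (\sum_l |\tilde f_l^i|^r)^{1/r}\big\|_{L^{q_i}(\mu_i)}$. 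On the left-hand side, the integrand is constant on each product cell $A_{l_1}^1\times\cdots\times A_{l_m}^m$, and using multilinearity to pull the coefficients $g_{k_i,l_i}^i$ and then the scalars $(a_{l_i}^i)^{1/r}$ inside $T$ one finds
$$
\int_{[0,1]^m} \Big| \sum_{k_1,\dots,k_m} T(f_{k_1}^1,\dots,f_{k_m}^m) \prod_{i=1}^m g_{k_i}^i(t_i) \Big|^r dt = \sum_{l_1,\dots,l_m} \big| T(\tilde f_{l_1}^1,\dots,\tilde f_{l_m}^m) \big|^r .
$$
Thus the asserted inequality for simple functions is precisely \eqref{multilinear MZ property} applied to the families $\{\tilde f_l^i\}_{l=1}^N$, with constant $k_{\q,p}^{(N)}(r)\le k_{\q,p}(r)$.

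For general $g_{k_i}^i\in L^r[0,1]$ I would approximate each $g_{k_i}^i$ in $L^r[0,1]$ by simple functions $s_{k_i}^i$ (arranged to be constant on a common partition for each fixed $i$) and pass to the limit, working in the Bochner spaces of the Notation section. Setting $G^i\colon\Omega_i\to L^r[0,1]$, $G^i(\omega)=\sum_{k_i} f_{k_i}^i(\omega)\, g_{k_i}^i$, and $H\colon\Omega\to L^r([0,1]^m)$, $H(\omega)=\sum_{k_1,\dots,k_m} T(f_{k_1}^1,\dots,f_{k_m}^m)(\omega)\prod_i g_{k_i}^i$, the two sides of the lemma are exactly $\|H\|_{L^p(\nu,L^r([0,1]^m))}$ and $k_{\q,p}(r)\|T\|\prod_i \|G^i\|_{L^{q_i}(\mu_i,L^r[0,1])}$. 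The right-hand factors depend continuously on the $g_{k_i}^i$ because
$$
\|G^i-S^i\|_{L^{q_i}(\mu_i,L^r)} \le \sum_{k_i} \|f_{k_i}^i\|_{L^{q_i}(\mu_i)} \, \|g_{k_i}^i-s_{k_i}^i\|_{L^r[0,1]},
$$
and the left-hand side does too via
$$
\|H-H_s\|_{L^p(\nu,L^r)} \le \sum_{k_1,\dots,k_m} \|T(f_{k_1}^1,\dots,f_{k_m}^m)\|_{L^p(\nu)} \, \Big\| \prod_i g_{k_i}^i - \prod_i s_{k_i}^i \Big\|_{L^r([0,1]^m)} .
$$
Combining these continuity statements with the simple-function inequality gives the result in full generality.

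The only delicate point is the continuity of the left-hand side under the approximation, since its integrand involves the \emph{product} $\prod_i g_{k_i}^i(t_i)$ rather than a single function. Here one expands $\prod_i g_{k_i}^i-\prod_i s_{k_i}^i$ as a telescoping sum and bounds the $L^r([0,1]^m)$-norm of the $j$-th term, using that the $L^r$-norm of a tensor product factorizes, by $\prod_{i<j}\|s_{k_i}^i\|_{L^r}\,\|g_{k_j}^j-s_{k_j}^j\|_{L^r}\prod_{i>j}\|g_{k_i}^i\|_{L^r}$; each such term carries a single small factor against bounded ones and hence tends to $0$. Everything else is bookkeeping with the definition of $k_{\q,p}(r)$ and (multi)linearity; the argument is uniform in $1\le p\le\infty$ because it uses only the Bochner structure of $L^p(\nu,X)$, and the hypothesis $r<\infty$ is what makes the triangle inequality in $L^r$ available throughout.
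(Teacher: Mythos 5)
Your proposal is correct and follows essentially the same route as the paper's proof: the core step (collapsing the $[0,1]^m$-integrals to finite sums when the $g_{k_i}^i$ are simple and absorbing the coefficients $c^j_{i_j,k_j}\lambda(A_{i_j})^{1/r}$ into the arguments of $T$ by multilinearity, so that \eqref{multilinear MZ property} applies directly) is exactly the paper's inequality \eqref{simples kqp}, and your approximation/continuity argument, including the telescoping bound for $\prod_i g^i_{k_i}-\prod_i s^i_{k_i}$ in $L^r([0,1]^m)$, matches the paper's estimates \eqref{cota simples} and \eqref{monotonia en p y desig triangular}.
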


\begin{proof}
Fix $T\colon L^{q_1}(\mu_1) \times \cdots \times L^{q_m}(\mu_m) \to L^p(\nu)$, $n \in \N$ and $\{f_{k_i}^i\}_{k_i=1}^{n} \subset L^{q_i}(\mu_i)$, $\{g_{k_i}^i\}_{k_i=1}^n \subset L^r[0,1]$. Along the proof, we will write $\|\cdot\|_r$ instead of $\|\cdot\|_{L^r[0,1]}$ or $\|\cdot\|_{L^r[0,1]^m}$, depending on the context. Let $\varepsilon >0$ and, for each $1 \leq i \leq m$, consider a sequence of simple functions $\{s_{k_i}^i\}_{k_i=1}^n$ such that $\|g_{k_i}^i - s_{k_i}^i\|_{r} < \varepsilon$, $k_i=1, \dots, n$. On the one hand, reasoning as in  \cite[Thm. 1]{GasMal}, we obtain
\begin{equation}\label{cota simples}
\left\| \left\| \sum_{k_i=1}^n f_{k_i}^i s_{k_i}^i \right\|_r\right\|_{q_i} \leq \left\| \left\| \sum_{k_i=1}^n f_{k_i}^i g_{k_i}^i \right\|_r\right\|_{q_i} + \varepsilon \left\| \sum_{k_i=1}^n |f_{k_i}^i|\right\|_{q_i}  \leq \left\| \left\| \sum_{k_i=1}^n f_{k_i}^i g_{k_i}^i \right\|_r\right\|_{q_i} + \varepsilon \sum_{k_i=1}^n \|f_{k_i}^i\|_{q_i}.
\end{equation}
On the other hand, $\|g_{k_1}^1 \cdots g_{k_m}^m - s_{k_1}^1 \cdots s_{k_m}^m\|_{r} < \gamma(\varepsilon)$ for some $\gamma(\varepsilon) \xrightarrow[\varepsilon \to 0]{}\, 0$ (just add and subtract the terms $s_{k_1}^1 g_{k_2}^2 \cdots g_{k_m}^m, s_{k_1}^1 s_{k_2}^2 \cdots g_{k_m}^m, \dots, s_{k_1}^1 \cdots s_{k_{m-1}}^{m-1} g_{k_m}^m$ and apply triangle inequality). Then, we have
\begin{eqnarray*}
&& \left| \left\| \sum_{k_1, \dots, k_m=1}^n T(f_{k_1}^1, \dots, f_{k_m}^m)(\omega) g_{k_1}^1 \cdots g_{k_m}^m \right\|_{r} - \left\| \sum_{k_1, \dots, k_m=1}^n T(f_{k_1}^1, \dots, f_{k_m}^m)(\omega) s_{k_1}^1 \cdots s_{k_m}^m \right\|_{r} \right| \\
&\leq&  \sum_{k_1, \dots, k_m=1}^n |T(f_{k_1}^1, \dots, f_{k_m}^m)(\omega)| \left\|g_{k_1}^1 \cdots g_{k_m}^m - s_{k_1}^1 \cdots s_{k_m}^m\right\|_r
\leq \gamma(\varepsilon) \sum_{k_1, \dots, k_m=1}^n |T(f_{k_1}^1, \dots, f_{k_m}^m)(\omega)|.
\end{eqnarray*}
Note that $\gamma(\varepsilon)$ is independent of the $k_i$'s; it depends on $\max_{i, k_i} \|g_{k_i}^i\|_{r}$ but this is not a problem since the functions $\{g_{k_i}^i\}_{k_i}$ are fixed.
Now, the previous inequality together
with the monotonicity of the norm $\|\cdot\|_p$, gives
\begin{eqnarray}\label{monotonia en p y desig triangular}
\nonumber (\dag):=\left\| \left\| \sum_{k_1, \dots, k_m=1}^n T(f_{k_1}^1, \dots, f_{k_m}^m) g_{k_1}^1 \cdots g_{k_m}^m \right\|_{r} \right\|_p \leq \left\| \left\| \sum_{k_1, \dots, k_m=1}^n T(f_{k_1}^1, \dots, f_{k_m}^m) s_{k_1}^1 \cdots s_{k_m}^m \right\|_{r} \right\|_p\\
+ \gamma(\varepsilon) \left\|  \sum_{k_1, \dots, k_m=1}^n |T(f_{k_1}^1, \dots, f_{k_m}^m)| \right\|_p.
\end{eqnarray}
Then, if we prove that
\begin{equation} \label{simples kqp}
\left\| \left\| \sum_{k_1, \dots, k_m=1}^n T(f_{k_1}^1, \dots, f_{k_m}^m) s_{k_1}^1 \cdots s_{k_m}^m \right\|_{r} \right\|_p
\leq k_{\q,p}(r) \|T\| \prod_{i=1}^m \left\| \left\| \sum_{k_i=1}^n f_{k_i}^i s_{k_i}^i \right\|_r \right\|_{q_i},
\end{equation}
we would have
\begin{eqnarray*}
&&(\dag) \leq k_{\q,p}(r) \|T\| \prod_{i=1}^m \left\| \left\| \sum_{k_i=1}^n f_{k_i}^i s_{k_i}^i \right\|_r \right\|_{q_i}  + \gamma(\varepsilon) \left\|  \sum_{k_1, \dots, k_m=1}^n |T(f_{k_1}^1, \dots, f_{k_m}^m)| \right\|_p \quad \text{(by \eqref{monotonia en p y desig triangular} and \eqref{simples kqp})}\\
&\leq&  k_{\q,p}(r) \|T\| \prod_{i=1}^m \left(\left\| \left\| \sum_{k_i=1}^n f_{k_i}^i g_{k_i}^i \right\|_r\right\|_{q_i} + \varepsilon \sum_{k_i=1}^n \|f_{k_i}^i\|_{q_i}\right) + \gamma(\varepsilon) \|T\| \sum_{k_1, \dots, k_m=1}^n \prod_{i=1}^m\|f_{k_i}^i\|_{q_i}, \quad \text{(by \eqref{cota simples})}
\end{eqnarray*}
and since $\varepsilon >0$ was chosen arbitrarily, letting $\varepsilon \to 0$,
$$
(\dag) \leq k_{\q,p}(r) \|T\| \prod_{i=1}^m \left\| \left\| \sum_{k_i=1}^n f_{k_i}^i g_{k_i}^i \right\|_r\right\|_{q_i}
$$
which is the desired statement. Then, it only remains to prove \eqref{simples kqp}. Since each $s_{k_j}^j$ is a simple function, there exist $c_{i_j, k_j}^j \in \R$ and $A_{i_j}$ measurable (disjoint) subsets of $[0,1]$ such that
$
s_{k_j}^j(\cdot) = \sum_{i_j}  c_{i_j, k_j}^j  \chi_{A_{i_j}}(\cdot).
$
Now, if we denote by $\lambda(A_{i_j})$ the measure of $A_{i_j}$,
\begin{eqnarray*}
&&\left\| \left\| \sum_{k_1, \dots, k_m=1}^n T(f_{k_1}^1, \dots, f_{k_m}^m) s_{k_1}^1 \cdots s_{k_m}^m \right\|_{r} \right\|_p \\
&=& \left( \int \left( \int_0^1 \cdots \int_0^1 \left| \sum_{k_1, \dots, k_m=1}^n T(f_{k_1}^1, \dots, f_{k_m}^m)(\omega) s_{k_1}^1(t_1) \cdots s_{k_m}^m(t_m) \right|^r dt_1 \cdots dt_m \right)^{p/r} d\nu(\omega) \right)^{1/p}\\
&=& \left( \int \left( \sum_{i_1, \dots, i_m} \left| \sum_{k_1, \dots, k_m=1}^n T(f_{k_1}^1, \dots, f_{k_m}^m)(\omega) c_ {i_1,k_1}^1 \dots c_{i_m,k_m}^m \lambda(A_{i_1})^{1/r} \cdots \lambda(A_{i_m})^{1/r} \right|^r \right)^{p/r} d\nu(\omega) \right)^{1/p}\\
&=& \left( \int \left( \sum_{i_1, \dots, i_m} \left| T\left(\sum_{k_1=1}^n c_{i_1, k_1}^1 \lambda(A_{i_1})^{1/r} f_{k_1}^1, \dots, \sum_{k_m=1}^n c_{i_m, k_m}^m \lambda(A_{i_m})^{1/r} f_{k_m}^m \right)(\omega) \right|^r  \right)^{p/r} d\nu(\omega) \right)^{1/p}\\
&\leq& k_{\q,p}(r) \|T\| \prod_{i=1}^m \left\| \left\| \sum_{k_i=1}^n f_{k_i}^i s_{k_i}^i \right\|_r \right\|_{q_i}
\end{eqnarray*}
which proves \eqref{simples kqp} (once again, we are assuming  $1 \leq p < \infty$, the case $p=\infty$  being completely analogous).
\end{proof}

Finally, we prove the monotonicity in $r$ stated in Proposition~\ref{monotonia en p y r} (ii).

\begin{proof}[Proof of monotonicity in $r$]
Let $1 \leq r<s \leq 2$. The proof follows by a simple application of Lemma~\ref{lemma r-stable} and  Lemma~\ref{lema monotonia en r} with the sequences $\{g_{k_i}^i\}_{k_i}=\{w_{k_i}\}_{k_i}$ of $s$-stable random variables and  functions $\{f_{k_i}^i\}_{k_i=1}^{n} \subset L^{q_i}(\mu_i)$. Indeed, we have
\begin{eqnarray*}
&&c_{s,r}^m \left\| \left( \sum_{k_1, \dots, k_m} |T(f_{k_1}^1, \dots, f_{k_m}^m)| \right)^{1/s} \right\|_{L^p(\nu)}\\
 &\leq& \left\| \left( \int_0^1 \cdots \int_0^1 \left|  \sum_{k_1, \dots, k_m} T(f_{k_1}^1, \dots, f_{k_m}^m) w_{k_1}(t_1) \cdots w_{k_m}(t_m) \right|^r dt_1 \dots dt_m \right)^{1/r} \right\|_{L^p(\nu)} \quad \text{(by \eqref{desigualdad r estables})}\\
 &\leq& k_{\q,p}(r) \|T\| \prod_{i=1}^m \left\| \left( \int_0^1 \left| \sum_{k_i=1}^n f_{k_i}^i w_{k_i}(t_i) \right|^r dt_i \right)^{1/r} \right\|_{L^{q_i}(\mu_i)} \quad \text{(by \eqref{desigualdad lema monotonia en r})}\\
&=& k_{\q,p}(r) \|T\| c_{s,r}^m \prod_{i=1}^m \left\| \left( \sum_{k_i=1}^n |f_{k_i}^i|^s \right)^{1/s} \right\|_{L^{q_i}(\mu_i)} \quad \text{(by \eqref{ecuacion r estables})}
\end{eqnarray*}
and this proves that $k_{\q,p}^{(n)}(s) \leq k_{\q,p}(r)$. Consequently, $k_{\q,p}(s) \leq k_{\q,p}(r)$.
\end{proof}

It is worth mentioning that the above proof does not assure $k_{\q,p}^{(n)}(s) \leq k_{\q,p}^{(n)}(r)$ for each $n \in \N$. The problem arises in Lemma~\ref{lema monotonia en r}, where we cannot put $k_{\q,p}^{(n)}(r)$ instead of  $k_{\q,p}(r)$. This problem was stated in \cite[Problem 1]{GasMal} in the linear case.

We will see now a kind of \textit{interpolation} behavior of $k_{\q,p}(r)$ as a function of $r$. First, we need the following known result (see, for instance, \cite[Sections 5.6, 5.7 and 5.8]{BerLof}).

\noindent\emph{
Let $1 \leq r_1,r_2 \leq \infty$ and $T$ be a multilinear operator which is bounded from $L^{q_1}(\mu_1, \ell^{r_i}) \times \cdots \times L^{q_m}(\mu_m, \ell^{r_i})$ into $L^p(\nu, \ell^{r_i}(\N \times \cdots \times \N))$ with norm $M_i$ ($i=1,2$). If $\frac{1}{r}=\frac{1-\theta}{r_1} + \frac{\theta}{r_2}$ with $0<\theta <1$,  then $T \colon L^{q_1}(\mu_1, \ell^{r}) \times \cdots \times L^{q_m}(\mu_m, \ell^{r}) \to L^p(\nu, \ell^{r}(\N \times \cdots \times \N))$ with norm less than or equal to $M_1^{1 - \theta}M_2^{\theta}$. The same is true if we put $\ell^r_n$ instead of $\ell^r$, where $\ell^r_n(\N \times \cdots \times \N)$ is the space of sequences $(a_{k_1, \dots, k_m})_{k_1, \dots, k_m=1}^n$ with the norm $r$.
}

As a consequence, we obtain the mentioned property of $k_{\q,p}(r)$.

\begin{corollary}\label{interpolation property of r}
Let $1 \leq p, q_1, \dots, q_m, r_1, r_2 \leq \infty$. If $\frac{1}{r}=\frac{1-\theta}{r_1} + \frac{\theta}{r_2}$ with $0<\theta <1$,  then $k_{\q,p}^{(n)}(r) \leq k_{\q,p}^{(n)}(r_1)^{1-\theta} k_{\q,p}^{(n)}(r_2)^{\theta}$. Consequently, if $k_{\q,p}(r_i)< \infty$ for $i=1,2$, then  $k_{\q,p}(r) \leq k_{\q,p}(r_1)^{1-\theta} k_{\q,p}(r_2)^{\theta} < \infty$.
\end{corollary}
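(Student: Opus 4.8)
The plan is to read the statement entirely through the operator-norm description of the constants. Recall from the discussion after \eqref{l_r valued multilinear operator} that
\[
k_{\q,p}^{(n)}(r)=\sup\bigl\{\,\|T^{\ell^r_n}\|:\ \|T\|\le 1\,\bigr\},
\]
where $T^{\ell^r_n}$ is the $\ell^r_n$-valued extension of a scalar multilinear operator $T$. The crucial observation is that, for a fixed $T\colon L^{q_1}(\mu_1)\times\cdots\times L^{q_m}(\mu_m)\to L^p(\nu)$, the extension formula \eqref{l_r valued multilinear operator} does \emph{not} depend on $r$: it is one and the same multilinear map on (finitely supported) sequences of functions, which we may simultaneously regard as acting on the $\ell^{r_1}_n$- and $\ell^{r_2}_n$-scales. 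This places us exactly in the setting of the vector-valued interpolation result quoted just above, whose hypothesis is that a \emph{single} operator be bounded for both $r_1$ and $r_2$.

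First I would fix $T$ with $\|T\|\le 1$ and record that this common extension map is bounded from $L^{q_1}(\mu_1,\ell^{r_i}_n)\times\cdots\times L^{q_m}(\mu_m,\ell^{r_i}_n)$ into $L^p(\nu,\ell^{r_i}_n(\N\times\cdots\times\N))$ with norm $M_i:=\|T^{\ell^{r_i}_n}\|\le k_{\q,p}^{(n)}(r_i)$, for $i=1,2$. Applying the interpolation theorem in its $\ell^r_n$ form, with $\tfrac1r=\tfrac{1-\theta}{r_1}+\tfrac{\theta}{r_2}$, then yields
\[
\|T^{\ell^r_n}\|\le M_1^{1-\theta}M_2^{\theta}\le k_{\q,p}^{(n)}(r_1)^{1-\theta}\,k_{\q,p}^{(n)}(r_2)^{\theta}.
\]
Taking the supremum over all $T$ with $\|T\|\le 1$ gives the first assertion $k_{\q,p}^{(n)}(r)\le k_{\q,p}^{(n)}(r_1)^{1-\theta}k_{\q,p}^{(n)}(r_2)^{\theta}$.

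For the ``consequently'' part I would pass to the limit in $n$. Using $k_{\q,p}^{(n)}(r_i)\le k_{\q,p}(r_i)$ together with the displayed bound, we obtain for every $n$
\[
k_{\q,p}^{(n)}(r)\le k_{\q,p}(r_1)^{1-\theta}k_{\q,p}(r_2)^{\theta},
\]
and since $k_{\q,p}(r)=\lim_{n\to\infty}k_{\q,p}^{(n)}(r)$ (observed in the introduction), letting $n\to\infty$ concludes the proof; the right-hand side is finite because $k_{\q,p}(r_i)<\infty$ by hypothesis.

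The step I expect to demand the most care is the identification in the first paragraph: one must verify that $T^{\ell^r_n}$ is literally the restriction of a single, $r$-independent multilinear map to the appropriate vector-valued domain, so that the interpolation hypothesis ``the same operator is bounded for both $r_1$ and $r_2$'' is genuinely satisfied, and that its operator norm into the $\ell^r_n$-valued target coincides with the quantity defining $k_{\q,p}^{(n)}(r)$. Once this bookkeeping is in place, the remainder is a direct invocation of the quoted theorem followed by routine passages to suprema and limits.
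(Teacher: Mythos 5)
Your proposal is correct and follows essentially the same route as the paper: fix $T$ with $\|T\|\leq 1$, observe that the two $\ell^{r_i}_n$-valued extensions of the \emph{same} operator have norms at most $k_{\q,p}^{(n)}(r_i)$, and invoke the quoted vector-valued interpolation theorem in its $\ell^r_n$ form before taking suprema. The only difference is that you spell out the passage to the limit in $n$ for the ``consequently'' part, which the paper leaves implicit; this is a harmless (and correct) addition.
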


\begin{proof}
It is enough to prove that, given $T \colon L^{q_1}(\mu_1) \times \cdots \times L^{q_m}(\mu_m) \to L^p(\nu)$ with norm $\|T\|\leq 1$, the $\ell^r_n$-valued extension $T^{\ell^r_n} \colon  L^{q_1}(\mu_1, \ell^{r}_n) \times \cdots \times L^{q_m}(\mu_m, \ell^{r}_n) \to L^p(\nu, \ell^{r}_n(\N \times \cdots \times \N))$ defined as in \eqref{l_r valued multilinear operator}  has norm less than or equal to $k_{\q,p}^{(n)}(r_1)^{1-\theta} k_{\q,p}^{(n)}(r_2)^{\theta}$. For this, simply note that each $\ell^{r_i}_n$-valued extension $T^{\ell^{r_i}_n}$ ($i=1,2$) has norm less than or equal to $k_{\q,p}^{(n)}(r_i)$ and apply the previous interpolation theorem.
\end{proof}

\subsection{The case $\q=(1, \dots, 1)$}
To finish this section we prove a generalization to the multilinear setting of a result stated in Theorem~\ref{constantes kqp para todos los triples} (i), which asserts that $k_{1,p}(r) =1$ for all $1 \leq p,r \leq \infty$. Recall that $L^p(\mu, X)$ is the space of $p$-integrable $X$-valued functions.
When $p=1$, there is a natural (isometric) identification between the projective tensor product $L^1(\mu) \tilde{\otimes}_{\pi} X$ and the space $L^1(\mu, X)$. Having in mind this isometry, we adopt the tensor notation $g\otimes x(\omega) = g(\omega) x$.
We refer readers to \cite[3.3]{DefFlo} for the definition of the projective tensor product and a detailed exposition of this topics. For our purposes, we will need the following well-known fact: given $f \in L^1(\mu,X)$ (or $L^1(\mu) \tilde{\otimes}_{\pi} X$, via the identification) and $\varepsilon>0$ there exist bounded sequences $(g_k)_k \subset L^1(\mu)$ and $(x_k)_k \subset X$ such that  the series $\sum_{k=1}^\infty g_k \otimes x_k$ converges to $f$ (in the projective norm) and
$
\sum_{k=1}^\infty \|g_k\|_{L^1(\mu)} \|x_k\|_X < \|f\|_{L^1(\mu, X)} + \varepsilon.
$
Then, it is clear that
\begin{equation}\label{L1 tensorial}
\|f\|_{L^1(\mu, X)} = \inf \left\{\sum_{k} \|g_k\|_{L^1(\mu)} \|x_k\|_X\right\},
\end{equation}
where the infimum is taken over all the representations $f=\sum_{k=1}^\infty g_k \otimes x_k$ with $g_k \in L^1(\mu)$ and $x_k \in X$.

\begin{proposition}\label{obs q=1}
 If $\q=(1, \dots, 1)$, then $k_{\q,p}(r)=1$ for all $1 \leq p,r \leq \infty$.
\end{proposition}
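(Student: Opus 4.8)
The plan is to prove the operator-norm reformulation recorded after \eqref{l_r valued multilinear operator}: for every bounded $m$-linear operator $T\colon L^1(\mu_1)\times\cdots\times L^1(\mu_m)\to L^p(\nu)$ the $\ell^r$-valued extension satisfies $\|T^{\ell^r}\|\le\|T\|$. Since the definition of $k_{\q,p}(r)$ already forces $k_{\q,p}(r)\ge 1$, this gives $k_{\q,p}(r)=1$. Concretely, fix finite families $\{f^i_{k_i}\}_{k_i=1}^{n_i}\subset L^1(\mu_i)$ (we may assume each right-hand factor of \eqref{multilinear MZ property} is finite, else there is nothing to prove) and regard the $i$-th family as the $\ell^r_{n_i}$-valued function $F^i(\omega)=(f^i_{k_i}(\omega))_{k_i}\in L^1(\mu_i,\ell^r_{n_i})$. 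Then $\|F^i\|_{L^1(\mu_i,\ell^r_{n_i})}=\left\|\left(\sum_{k_i}|f^i_{k_i}|^r\right)^{1/r}\right\|_{L^1(\mu_i)}$ is exactly the $i$-th factor on the right of \eqref{multilinear MZ property}, while the left-hand side equals $\|T^{\ell^r}(F^1,\dots,F^m)\|_{L^p(\nu,\ell^r)}$, so the whole statement reduces to bounding this quantity by $\|T\|\prod_i\|F^i\|_{L^1(\mu_i,\ell^r_{n_i})}$.

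First I would invoke the projective representation \eqref{L1 tensorial} for $X=\ell^r_{n_i}$: given $\varepsilon>0$, write $F^i=\sum_{j_i} g^i_{j_i}\otimes x^i_{j_i}$ with $g^i_{j_i}\in L^1(\mu_i)$, $x^i_{j_i}=(x^i_{j_i,k_i})_{k_i}\in\ell^r_{n_i}$ and $\sum_{j_i}\|g^i_{j_i}\|_{L^1(\mu_i)}\|x^i_{j_i}\|_{\ell^r}<\|F^i\|_{L^1(\mu_i,\ell^r_{n_i})}+\varepsilon$. Reading off the $k_i$-th coordinate (the coordinate projections are bounded, so convergence in the projective norm descends to $L^1(\mu_i)$) gives $f^i_{k_i}=\sum_{j_i} x^i_{j_i,k_i}\,g^i_{j_i}$ in $L^1(\mu_i)$. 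The decisive computation is for a single product term $g^1_{j_1}\otimes x^1_{j_1},\dots,g^m_{j_m}\otimes x^m_{j_m}$: by multilinearity its $\ell^r$-valued image is $H_{j_1,\dots,j_m}:=\left(x^1_{j_1,k_1}\cdots x^m_{j_m,k_m}\,T(g^1_{j_1},\dots,g^m_{j_m})\right)_{k_1,\dots,k_m}$, which factorizes since $\left(\sum_{k_1,\dots,k_m}\prod_i|x^i_{j_i,k_i}|^r\right)^{1/r}=\prod_i\|x^i_{j_i}\|_{\ell^r}$, yielding
\[
\|H_{j_1,\dots,j_m}\|_{L^p(\nu,\ell^r)}=\|T(g^1_{j_1},\dots,g^m_{j_m})\|_{L^p(\nu)}\prod_{i=1}^m\|x^i_{j_i}\|_{\ell^r}\le \|T\|\prod_{i=1}^m\|g^i_{j_i}\|_{L^1(\mu_i)}\,\|x^i_{j_i}\|_{\ell^r}.
\]

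Then I would assemble the estimate. Summing the last display over all $(j_1,\dots,j_m)$ gives $\sum_{j_1,\dots,j_m}\|H_{j_1,\dots,j_m}\|_{L^p(\nu,\ell^r)}\le\|T\|\prod_{i=1}^m\big(\|F^i\|_{L^1(\mu_i,\ell^r_{n_i})}+\varepsilon\big)<\infty$, so the $m$-fold series $\sum_{j_1,\dots,j_m}H_{j_1,\dots,j_m}$ converges absolutely in $L^p(\nu,\ell^r)$. Using the separate continuity of $T$ to pass the operator through each coordinatewise $L^1(\mu_i)$-convergent inner sum, its $(k_1,\dots,k_m)$-coordinate equals $T(f^1_{k_1},\dots,f^m_{k_m})$, so the sum is precisely $T^{\ell^r}(F^1,\dots,F^m)$; the triangle inequality then yields $\|T^{\ell^r}(F^1,\dots,F^m)\|_{L^p(\nu,\ell^r)}\le\|T\|\prod_{i=1}^m\big(\|F^i\|_{L^1(\mu_i,\ell^r_{n_i})}+\varepsilon\big)$, and letting $\varepsilon\to0$ gives \eqref{multilinear MZ property} with $C=1$. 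The one genuinely delicate point is this last justification: legitimately interchanging the infinite projective sums with the multilinear $T$ and securing absolute convergence of the resulting $m$-fold series in $L^p(\nu,\ell^r)$. The factorization estimate above is exactly what makes this work, since it dominates the series by the product of the (finite) projective sums. The cases $r=\infty$ or $p=\infty$ need only the usual modification of the norms — in particular $\max_{k_1,\dots,k_m}\prod_i|x^i_{j_i,k_i}|=\prod_i\max_{k_i}|x^i_{j_i,k_i}|$ preserves the factorization — and go through verbatim.
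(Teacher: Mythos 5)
Your proof is correct and follows essentially the same route as the paper's: encode the finite families as elements of $L^1(\mu_i,\ell^r_{n_i})$, invoke the projective-tensor representation \eqref{L1 tensorial} up to $\varepsilon$, exploit the factorization $\|x^1\otimes\cdots\otimes x^m\|_{\ell^r(\N\times\cdots\times\N)}=\prod_i\|x^i\|_{\ell^r}$ on elementary tensors, and conclude by the triangle inequality and $\varepsilon\to 0$. The only difference is that you spell out the absolute-convergence and sum--operator interchange justifications that the paper leaves implicit, which is a welcome refinement rather than a departure.
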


\begin{proof}
We suppose $1 \leq p, r < \infty$ (the proof is the same in the remaining cases).
Let $T\colon L^1(\mu_1) \times \cdots \times L^1(\mu_m) \to L^p(\nu)$ and functions $\{f_{k_i}^i\}_{k_i=1}^n \in L^1(\mu_i)$, $i=1, \dots, m$. Consider $h_i \in L^1(\mu_i, \ell^r)$ defined by $h_i=\sum_{k_i=1}^n f_{k_i}^i \otimes e_{k_i}$ and note that
\begin{equation}\label{norma hi}
\|h_i\|_{L^1(\mu_i, \ell^r)} = \int \left\| \sum_{k_i=1}^n f_{k_i}^i(\omega) e_{k_i} \right\|_{\ell^r} d\mu_i(\omega)= \left\| \left( \sum_{k_i=1}^n |f_{k_i}^i|^r \right)^{1/r} \right\|_{L^1(\mu_i)}.
\end{equation}
By \eqref{L1 tensorial}, given $\varepsilon >0$ we can take (for each $1\leq i \leq m$) a representation $h_i= \sum_{l_i} g_{l_i}^i \otimes x_{l_i}^i$ such that
\begin{equation} \label{una representacion}
\sum_{l_i} \|g_{l_i}^i\|_{L^1(\mu_i)} \|x_{l_i}^i\|_{\ell^r} < \|h_i\|_{L^1(\mu_i, \ell^r)} + \varepsilon.
\end{equation}
Now, consider $T^{\ell^r} \colon L^1(\mu_1, \ell^r) \times \cdots \times L^1(\mu_m, \ell^r) \to L^p(\nu, \ell^r(\N \times \cdots \times \N))$ the $m$-linear operator defined by $T^{\ell^r}(f^1 \otimes x^1, \dots, f^m \otimes x^m)(\cdot) = T(f^1, \dots, f^m)(\cdot) x^1\otimes \cdots \otimes x^m$, where ${x^1\otimes \cdots \otimes x^m}$ stands for $(x^1(i_1) \cdots x^m(i_m))_{i_1, \dots, i_m=1}^\infty \in \ell^r(\N \times \cdots \times \N)$. On the one hand we have that $T^{\ell^r}(h_1, \dots, h_m)(\cdot)=\sum_{k_1, \dots,k_m=1}^n T(f_{k_1}^1, \dots, f_{k_m}^m)(\cdot) e_{k_1}\otimes \cdots \otimes e_{k_m}$ and, hence,
\begin{eqnarray}\label{norma Ttilde}
\nonumber\|T^{\ell^r}(h_1, \dots, h_m)\|_{L^p(\nu, \ell^r(\N \times \cdots \times \N))} &=& \left(\int \left\| \sum_{k_1, \dots, k_m=1}^n T(f_{k_1}^1, \dots, f_{k_m}^m)(\omega) e_{k_1}\otimes \cdots \otimes e_{k_m}  \right\|_{\ell^r}^p d\nu(\omega) \right)^{1/p}\\
&=& \left(\int \left( \sum_{k_1, \dots, k_m=1}^n |T(f_{k_1}^1, \dots, f_{k_m}^m)(\omega)|^r \right)^{p/r} d\nu(\omega) \right)^{1/p}.
\end{eqnarray}
On the other hand
$
T^{\ell^r}(h_1, \dots, h_m)(\cdot)=\sum_{l_1, \dots, l_m} T(g_{l_1}^1, \dots, g_{l_m}^m)(\cdot) x_{l_1}^1 \otimes \cdots \otimes x_{l_m}^m,
$
from where
\begin{eqnarray}\label{norma Ttilde2}
\nonumber && \|T^{\ell^r}(h_1, \dots, h_m)\|_{L^p(\nu, \ell^r(\N \times \cdots \times \N))} = \left\| \sum_{l_1, \dots, l_m} T(g_{l_1}^1, \dots, g_{l_m}^m)(\cdot) x_{l_1}^1 \otimes \cdots \otimes x_{l_m}^m  \right\|_{L^p(\nu, \ell^r(\N \times \cdots \times \N))} \\
\nonumber &\leq& \sum_{l_1, \dots, l_m} \|T(g_{l_1}^1, \dots, g_{l_m}^m)\|_{L^p(\nu)} \|x_{l_1}^1\|_{\ell^r} \cdots \|x_{l_m}^m\|_{\ell^r}
\leq \|T\| \sum_{l_1, \dots, l_m} \left(\prod_{i=1}^m\|g_{l_i}^i\|_{L^1(\mu_i)} \right)  \left(\prod_{i=1}^m \|x_{l_i}^i\|_{\ell^r}\right) \\
\nonumber&=& \|T\| \prod_{i=1}^m \left(\sum_{l_i}  \|g_{l_i}^i\|_{L^1(\mu_i)} \|x_{l_i}^i\|_{\ell^r}  \right)
\leq \|T\| \prod_{i=1}^m \left(\left\| \left( \sum_{k_i=1}^n |f_{k_i}^i|^r \right)^{1/r} \right\|_{L^1(\mu_i)}+\varepsilon\right),
\end{eqnarray}
the last inequality due to \eqref{norma hi} and \eqref{una representacion}. Putting together \eqref{norma Ttilde} and the inequality above we obtain
$$
 \left(\int \left( \sum_{k_1, \dots, k_m=1}^n |T(f_{k_1}^1, \dots, f_{k_m}^m)(\omega)|^r \right)^{p/r} d\nu(\omega) \right)^{1/p} \leq  \|T\| \prod_{i=1}^m \left(\left\| \left( \sum_{k_i=1}^n |f_{k_i}^i|^r \right)^{1/r} \right\|_{L^1(\mu_i)}+\varepsilon\right)
$$
and since $\varepsilon >0$ was arbitrary we deduce that $k_{(1, \dots,1), p}^{(n)}(r) =1$ for all $n \in \N$.
\end{proof}

\section{The triples $(\q,p,r)$ satisfying $k_{\q,p}(r) < \infty$}\label{section main results}

\subsection{The case $1 \leq q_1,\dots, q_m \leq 2$, $p<\infty$} \label{caso menores que 2}
In this section we see that, in the particular case  $1 \leq q_1,\dots, q_m \leq 2$ and $1 \leq p<\infty$, we can determine the triples $(p,\q,r)$ satisfying the Marcinkiewicz-Zygmund inequality and the exact values of the constants $k_{\q,p}(r)$. We follow ideas of \cite{GasMal} (see Theorems~2 and 3 in there) to obtain upper estimates for the constants $k_{\q,p}(r)$ and then use Proposition~\ref{relacion lineal-multilineal} for the lower bounds. The exact value of $k_{\q,p}(r)$ is then determined by the exact values of the constants $k_{q_i,p}(r)$ obtained in \cite{DefJun}.

\begin{theorem}\label{caracterizacion caso menor 2}
Let $1\leq q_1, \dots, q_m \leq 2$, $1 \leq p < \infty$ and denote  $\qm = \max\{q_1, \dots, q_m\}$. Then $k_{\q,p}(r) < \infty$ if and only if one of the following conditions is satisfied:
\begin{enumerate}
\item[\rm (i)] $r=2$;
\item[\rm (ii)]  $\qm =1$ and $1 \leq r \leq \infty$;
\item[\rm (iii)] $\qm < r \leq \max\{p,2\}$;
\item[\rm (iv)] $\qm =r \leq p$.
\end{enumerate}
Moreover, in all these cases we have $k_{\q,p}(r) = k_{q_1,p}(r) \cdots k_{q_m,p}(r)$.
\end{theorem}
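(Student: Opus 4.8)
The plan is to prove the two inequalities $\prod_{i=1}^m k_{q_i,p}(r)\le k_{\q,p}(r)$ and $k_{\q,p}(r)\le \prod_{i=1}^m k_{q_i,p}(r)$ separately; the first comes for free from Proposition~\ref{relacion lineal-multilineal} and simultaneously yields the ``only if'' direction together with the finiteness dichotomy. Indeed, by Theorem~\ref{propiedades de los kqp}~(ii) one has $k_{q_i,p}(r)\le k_{\qm,p}(r)$ for every $i$ (since $q_i\le\qm$ and $p$ is fixed), so all the linear constants are finite if and only if $k_{\qm,p}(r)<\infty$. As $\qm\le 2$, I would read off from Theorem~\ref{constantes kqp para todos los triples} precisely when the single linear constant $k_{\qm,p}(r)$ is finite and check, by a short case distinction on the position of $\qm$ relative to $p$ and $2$, that this is equivalent to one of the conditions (i)--(iv). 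Combining this with the lower bound $\prod_i k_{q_i,p}(r)\le k_{\q,p}(r)$ shows that if none of (i)--(iv) holds then some factor, hence the product, hence $k_{\q,p}(r)$, is infinite. It therefore remains to prove $k_{\q,p}(r)\le \prod_i k_{q_i,p}(r)$ in each of the cases (i)--(iv); this also gives the asserted equality.

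I would first dispose of the easy ranges. If $\qm=1$ (condition (ii)) then $\q=(1,\dots,1)$ and Proposition~\ref{obs q=1} gives $k_{\q,p}(r)=1=\prod_i k_{q_i,p}(r)$ for every $r$. If instead $r\le p$, then in each of (i), (iii), (iv) one checks $\qm\le r\le p$, so Remark~\ref{remark after monotonicity} yields $k_{\q,p}(r)=1$, while Theorem~\ref{constantes kqp para todos los triples} gives $k_{q_i,p}(r)=1$ for all $i$; hence equality holds with both sides equal to $1$. The only remaining possibility is $p<r$, which together with $r\le\max\{p,2\}$ forces $1\le p<r\le 2$; this is the substantial case.

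So assume $1\le p<r\le 2$ and that every $k_{q_i,p}(r)$ is finite. Fix $T$ and functions $\{f^i_{k_i}\}_{k_i=1}^n$, and take mutually independent $r$-stable variables $\{w_{k_i}\}_{k_i}$ on $[0,1]$. Applying Lemma~\ref{lemma r-stable} pointwise in $\omega$ (here $C=c_{r,p}^m$, since $p<r\le 2$), taking $\|\cdot\|_{L^p(\nu)}$ and using Fubini (both exponents equal $p$), and then expanding the inner sum by multilinearity as $T\big(\sum_{k_1}f^1_{k_1}w_{k_1}(t_1),\dots,\sum_{k_m}f^m_{k_m}w_{k_m}(t_m)\big)$ and bounding it by $\|T\|$, I obtain
\begin{equation*}
c_{r,p}^m\left\|\left(\sum_{k_1,\dots,k_m}|T(f^1_{k_1},\dots,f^m_{k_m})|^r\right)^{1/r}\right\|_{L^p(\nu)} \le \|T\|\prod_{i=1}^m\left(\int_0^1\left\|\sum_{k_i}f^i_{k_i}w_{k_i}(t_i)\right\|_{L^{q_i}(\mu_i)}^p dt_i\right)^{1/p},
\end{equation*}
the product of integrals arising because the integrand factors over the variables $t_i$.

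The heart of the matter is then the single-variable estimate
\begin{equation*}
\left(\int_0^1\left\|\sum_{k_i}f^i_{k_i}w_{k_i}(t_i)\right\|_{L^{q_i}(\mu_i)}^p dt_i\right)^{1/p} \le c_{r,\max\{p,q_i\}}\left\|\left(\sum_{k_i}|f^i_{k_i}|^r\right)^{1/r}\right\|_{L^{q_i}(\mu_i)},
\end{equation*}
which I would prove by splitting according to $q_i\le p$ and $p<q_i$: when $q_i\le p$ I swap the inner $L^{q_i}(\mu_i)$-norm outside by Minkowski's integral inequality and apply \eqref{ecuacion r estables} with $s=p$; when $p<q_i$ I pass from $L^p(dt_i)$ to $L^{q_i}(dt_i)$ on the probability space $[0,1]$ and apply \eqref{ecuacion r estables} with $s=q_i$ (in both regimes the relevant moment is finite because $s\le r\le 2$ and $s=r$ occurs only when $r=2$). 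Finally I invoke Theorem~\ref{constantes kqp para todos los triples} to identify $c_{r,\max\{p,q_i\}}=c_{r,p}\,k_{q_i,p}(r)$: it equals $c_{r,p}$ when $q_i\le p$, where $k_{q_i,p}(r)=1$, and $c_{r,q_i}$ when $p<q_i$, where $k_{q_i,p}(r)=c_{r,q_i}/c_{r,p}$. Substituting this into the previous display and cancelling the finite nonzero factor $c_{r,p}^m$ gives $k_{\q,p}^{(n)}(r)\le\prod_i k_{q_i,p}(r)$, and hence the claim. I expect this single-variable estimate to be the main obstacle: getting the direction of Minkowski's inequality right in the two regimes, ensuring the finiteness of $c_{r,p}$ (which is exactly why the range $p<r$ must be isolated from $r\le p$), and matching $c_{r,\max\{p,q_i\}}$ with the precise linear constants are the delicate points.
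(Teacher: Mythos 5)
Your proposal is correct and takes essentially the same approach as the paper's proof: the lower bound and the ``only if'' direction come from Proposition~\ref{relacion lineal-multilineal} together with the linear characterization in Theorem~\ref{constantes kqp para todos los triples}, the ranges $\qm=1$ and $\qm\le r\le p$ are settled by Proposition~\ref{obs q=1} and Remark~\ref{remark after monotonicity}, and the substantial case $1\le p<r\le 2$ is handled exactly as in the paper via Lemma~\ref{lemma r-stable}, Fubini and multilinearity, followed by the same Minkowski/H\"older split according to whether $q_i\le p$ or $p<q_i$ and the identification of the resulting moments $c_{r,\max\{p,q_i\}}$ with $c_{r,p}\,k_{q_i,p}(r)$. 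The paper merely packages the case analysis differently (as its internal assertions (a) and (b)); the mathematics is identical.
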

It can be easily deduced from the preceding theorem and Theorem~\ref{constantes kqp para todos los triples} that, in the case $1 \leq q_1, \dots, q_m \leq 2$ and $1 \leq p < \infty$, we have $k_{\q,p}(r)<\infty$ if and only if $k_{q_i,p}(r) < \infty$ for all $1 \leq i\leq m$.
Then, in virtue of the monotonicity of $k_{q_i,p}(r)$ as a function of $q_i$  (see Theorem~\ref{propiedades de los kqp}), it follows that $k_{\q,p}(r) < \infty$ if and only if $k_{\qm,p}(r) < \infty$.
Note that, as pointed out in the Introduction (see Proposition~\ref{proposicion caso infinito} and the paragraph above), this equivalence does not hold in general, as we will see in Section~\ref{caso p infinito}.
\begin{proof}[Proof of Theorem~\ref{caracterizacion caso menor 2}]
Suppose first that $k_{\q,p}(r) < \infty$ and (ii) is not satisfied, and let us see that either (i), (iii) or (iv) is satisfied. By Proposition~\ref{relacion lineal-multilineal} we have $k_{q_i,p}(r) < \infty$ for all $1 \leq i \leq m$ and, in particular, $k_{\qm,p}(r) < \infty$.
If $\qm \leq p$, we have $1 \leq \qm \leq p < \infty$ and $k_{\qm,p}(r)<\infty$ and, by Theorem~\ref{constantes kqp para todos los triples}, it follows that $\qm \leq r \leq \max\{p,2\}$.
If  $\qm > p$, then we have $1\leq p < \qm\leq 2$ and $k_{\qm,p}(r)< \infty$ and by Theorem~\ref{constantes kqp para todos los triples} it follows that $r=2$ or $\qm<r\leq 2 = \max\{p,2\}$.
We deduce from the cases above that either $r=2$ or $\qm < r \leq \max\{p,2\}$ or $\qm =r$ and $\qm \leq p$.
For the converse, assume the following facts.
\begin{enumerate}
\item[\rm (a)] If $1 \leq p, \qm <r<2$ or $1 \leq p, \qm \leq r=2$, then $k_{\q,p}(r) = k_{q_1,p}(r)\cdots k_{q_m,p}(r) < \infty$.
\item[\rm (b)] If $\qm \leq r \leq p$, then $k_{\q,p}(r) = 1 = k_{q_1,p}(r)\cdots k_{q_m,p}(r)$.
\end{enumerate}
If (i) holds then, whether $p<2$ or $p\geq 2$, $k_{\q,p}(r) = k_{q_1,p}(r)\cdots k_{q_m,p}(r)<\infty$ in virtue of (a) and (b).
If (ii) holds then by Proposition~\ref{obs q=1} we have $k_{\q,p}(r) = 1 = k_{q_1,p}(r)\cdots k_{q_m,p}(r)$.
Suppose we are under the hypothesis of (iii). If $p<r$ then $\qm < r \leq \max\{p,2\}= 2$ and hence $k_{\q,p}(r) = k_{q_1,p}(r)\cdots k_{q_m,p}(r) < \infty$ by (a), while if $p \geq r$ then $\qm < r \leq p$ and, consequently, $k_{\q,p}(r) = 1 = k_{q_1,p}(r)\cdots k_{q_m,p}(r)$ by (b). Finally, if (iv) holds then $k_{\q,p}(r) = 1 = k_{q_1,p}(r)\cdots k_{q_m,p}(r)$ by (b). Then, it only remains to prove (a) since the assertion in (b) was already stated in Remark~\ref{remark after monotonicity}. Fix $T\colon L^{q_1}(\mu_1)\times \cdots \times L^{q_m}(\mu_m) \to L^p(\nu)$ and functions $\{f_{k_i}^i\}_{k_i=1}^n \subset L^{q_i}(\mu_i)$, $1\leq i \leq m$. By Lemma~\ref{lemma r-stable} and the $m$-linearity of $T$ we have
\begin{eqnarray*}
&&(\star)=c_{r,p}^{mp} \int \left( \sum_{k_1, \dots, k_m=1}^n |T(f_{k_1}^1, \dots, f_{k_m}^m)(\omega)|^r \right)^{p/r} d\nu(\omega) \\
&\leq& \int \int_0^1 \cdots \int_0 ^1 \left|\sum_{k_1, \dots, k_m=1}^n T(f_{k_1}^1, \dots, f_{k_m}^m)(\omega) w_{k_1}(t_1) \cdots w_{k_m}(t_m)\right|^p dt_1 \cdots dt_m d\nu(\omega)\\
&=& \int \int_0^1 \cdots \int_0 ^1 \left| T\left(\sum_{k_1=1}^n f_{k_1}^1 w_{k_1}(t_1), \dots, \sum_{k_m=1}^n f_{k_m}^m w_{k_m}(t_m) \right)(\omega) \right|^p dt_1 \cdots dt_m d\nu(\omega).
\end{eqnarray*}
Applying Fubini and the boundedness of $T$ we obtain,
\begin{eqnarray}
\nonumber(\star)&\leq& \int_0^1 \cdots \int_0 ^1 \left\| T\left(\sum_{k_1=1}^n f_{k_1}^1 w_{k_1}(t_1), \cdots, \sum_{k_m=1}^n f_{k_m}^m w_{k_m}(t_m) \right) \right\|_{L^p(\nu)}^p  dt_1 \cdots dt_m \\
\label{cota dag}&=& \|T\|^p \prod_{i=1}^m \int_0^1 \left\| \sum_{k_i=1}^n f_{k_i}^i w_{k_i}(t_i) \right\|^p_{L^{q_i}(\mu_i)}  dt_i.
\end{eqnarray}
Then, it will be suffice to obtain upper bounds for each $\int_0^1 \left\| \sum_{k_i=1}^n f_{k_i}^i w_{k_i}(t_i) \right\|^p_{L^{q_i}(\mu_i)}  dt_i$. On the one hand, for those $q_i<p$ we have
\begin{eqnarray*}
&& \int_0^1 \left\| \sum_{k_i=1}^n f_{k_i}^i w_{k_i}(t_i) \right\|^p_{L^{q_i}(\mu_i)}  dt_i = \left[ \left( \int_0^1 \left( \int \left| \sum_{k_i=1}^n f_{k_i}^i(\omega) w_{k_i}(t_i) \right|^{q_i} d\mu_i(\omega) \right)^{p/q_i} dt_i \right)^{q_i/p} \right]^{p/q_i}\\
&\leq& \left[ \int \left( \int_0^1 \left| \sum_{k_i=1}^n f_{k_i}^i(\omega) w_{k_i}(t_i) \right|^p dt_i \right)^{q_i/p}  d\mu_i(\omega) \right]^{p/q_i} \quad \text{(by Minkowski with $p/q_i > 1$)}\\
&=& c_{r,p}^{p} \left\| \left( \sum_{k_i=1}^n |f_{k_i}^i|^r \right)^{1/r} \right\|^{p}_{L^{q_i}(\mu_i)} \quad \text{(by \eqref{ecuacion r estables}).}
\end{eqnarray*}
On the other hand, if $p \leq q_i$
\begin{eqnarray*}
\int_0^1 \left\| \sum_{k_i=1}^n f_{k_i}^i w_{k_i}(t_i) \right\|^p_{L^{q_i}(\mu_i)}  dt_i &\leq& \left( \int_0 ^1 \left\| \sum_{k_i=1}^n f_{k_i}^i w_{k_i}(t_i) \right\|^{q_i}_{L^{q_i}(\mu_i)}  dt_i \right)^{p/q_i} \quad \text{(by H$\ddot{\text{o}}$lder with $q_i/p\geq 1$)}\\
&=& \left(  \int \int_0^1 \left| \sum_{k_i=1}^n f_{k_i}^i(\omega) w_{k_i}(t_i) \right|^{q_i}   dt_i d\mu_i(\omega) \right)^{p/q_i} \quad \text{(by Fubini)}\\
&=& c_{r,q_i}^{p} \left\| \left( \sum_{k_i=1}^n |f_{k_i}^i|^r \right)^{1/r} \right\|^{p}_{L^{q_i}(\mu_i)} \quad \text{(by \eqref{ecuacion r estables}).}
\end{eqnarray*}
These inequalities together with \eqref{cota dag} give
$$
\left\|\left( \sum_{k_1, \dots, k_m=1}^n |T(f_{k_1}^1, \dots, f_{k_m}^m)|^r \right)^{1/r} \right\|_{L^p(\nu)} \leq  \frac{c_{r,q_{j_1}} \cdots c_{r,q_{j_{m_2}}}}{c_{r,p}^{m_2}} \|T\| \prod_{i=1}^m \left\| \left( \sum_{k_i=1}^n |f_{k_i}^i|^r \right)^{1/r} \right\|_{L^{q_i}(\mu_i)},
$$
where $q_{j_1}, \dots, q_{j_{m_2}}$ are those $q_i\geq p$. Noting that $k_{q_i,p}(r) =1$ if $q_i<p$ and $k_{q_i,p}(r)=\frac{c_{r,q_i}}{c_{r,p}}$ if $q_i \geq p$ (see Theorem~\ref{constantes kqp para todos los triples}), the last inequality gives $k_{\q,p}(r) \leq k_{q_1,p}(r) \cdots k_{q_m,p}(r) < \infty$. The equality holds as a consequence of Proposition~\ref{relacion lineal-multilineal} and this proves (a).
\end{proof}
Note that in the proof of the statement (a) when $r<2$, the hypothesis $p, \qm < r$ is necessary in order to apply \eqref{ecuacion r estables}. When $r=2$, there is no need of this hypothesis to see, with the same proof, that $k_{\q,p}(2)< \infty$ (we will point out this fact in Proposition~\ref{MZ multilineal para p,q>0} below). In this case, we need the assumption $p, \qm \leq 2$ to ensure $k_{\q,p}(2)=k_{q_1,p}(2) \cdots k_{q_m,p}(2)$.

\subsection{Proof of the main theorem}\label{section main theorem}

We prove now Theorem~\ref{main theorem}, which determines the set of those triples $(p,\q,r)$ satisfying $k_{\q,p}(r)<\infty$, with the exception of the cases $p=\infty$ (partially discussed in Section~\ref{caso p infinito}) and $2 \leq p < \max\{q_1, \dots, q_m\}$. At this point, the results stated in the theorem are simple consequences of those obtained in the previous sections.

\begin{proof}[Proof of Theorem~\ref{main theorem}]
We begin by proving (i). Items (ia) and (ic) are consequences of Theorem~\ref{caracterizacion caso menor 2}, then we only need to prove (ib). If $k_{\q,p}(r)< \infty$ then $k_{\qm,p}(r)< \infty$ (by Proposition~\ref{relacion lineal-multilineal}) and by Theorem~\ref{constantes kqp para todos los triples}(ii) it follows that $2\leq r \leq p$. For the converse, take $2 \leq r \leq p$. If $\qm \leq r \leq p$ then $k_{\q,p}(r) = 1$ by Remark~\ref{remark after monotonicity}, while if $2 \leq r \leq \qm$ then $k_{\q,p}(r) < \infty$ by Corollary~\ref{interpolation property of r}, since $k_{\q,p}(2)<\infty$ (this is the particular case of the Marcinkiewicz-Zygmund inequalities proved in \cite{BomPerVil, GraMar}) and $k_{\q,p}(\qm) = 1 < \infty$.

Now we prove (ii). Item (iia) follows from Theorem~\ref{caracterizacion caso menor 2}. For (iib) (resp. (iic)) note that $k_{\q,p}(r)< \infty$ implies $k_{\qm,p}(r)< \infty$ and, by Theorem~\ref{constantes kqp para todos los triples}(iii), this gives $2 \leq r <p$ (resp. $r=2$). The converse in (iic) and the sufficient condition in (iib) are again consequences of the multilinear Marcinkiewicz-Zygmund inequality for $r=2$ obtained in \cite{BomPerVil, GraMar}.
\end{proof}

\section{Proof of Proposition~\ref{proposicion caso infinito}}\label{caso p infinito}
The key tool in this section is a variant of a Kahane-Salem-Zygmund inequality.
The idea is to construct a multilinear operator from $\ell^{q_1}_n \times \cdots \times \ell^{q_m}_n$ into $\ell^{\infty}_n$ with relatively small supremum norm but for which $\left( \sum_{i_1, \dots, i_m=1}^n |T(e_{i_1}, \dots, e_{i_m})|^r \right)^{1/r}$ has a relatively large $\ell^\infty$-norm. This, together with an appropriate choice of the indices $q_1, \dots, q_m, r$, will force $\lim_n k_{\q, \infty}^{(n)}(r) = \infty$ as desired.
In \cite[Thm. 4]{Boa}, Boas showed that there exists an $(m+1)$-linear map $T \colon \ell^{2}_n \times \cdots \times \ell^{2}_n \to \R$ of the form
\begin{equation}\label{T unimodular escalar}
T(z^{(1)}, \dots, z^{(m+1)})= \sum_{j_1, \dots, j_{m+1}=1}^n \varepsilon_{j_1, \dots, j_{m+1}} z_{j_1}^{(1)} \cdots z_{j_{m+1}}^{(m+1)}
\end{equation}
where $\varepsilon_{j_1, \dots, j_{m+1}}=\pm 1$, such that $\|T\|_{\ell^{2}_n \times \cdots \times \ell^{2}_n \to \R}\prec n^{1/2}$ (here, $\prec$ means that there exist a constant $C_m>0$, depending only on $m$, such that $\|T\|_{\ell^{2}_n \times \cdots \times \ell^{2}_n \to \R}\leq C_m n^{1/2}$). The restriction of this operator to $\ell^{1}_n \times \cdots \times \ell^{1}_n$ has norm one and hence, by an interpolation argument, if $1<q<2$ (note that $\frac{1}{q}= \frac{1 - \theta}{1} + \frac{\theta}{2}$ with $\theta=\frac{2}{q'}$) then $T \colon \ell^{q}_n \times \cdots \times \ell^{q}_n \to \R$ has norm $\|T\|_{ \ell^{q}_n \times \cdots \times \ell^{q}_n \to \R}\prec 1^{1-\theta} n^{\frac{\theta}{2}} = n^{\frac{1}{q'}}$. It is clear then that, if $1 \leq q_1, \dots, q_{m+1} \leq 2$ and $\qm=\max\{q_1, \dots, q_{m+1}\}$, $T\colon \ell^{q_1}_n \times \cdots \times \ell^{q_{m+1}}_n \to \R$ has norm $\|T\|_{ \ell^{q_1}_n \times \cdots \times \ell^{q_{m+1}}_n \to \R} \leq \|T\|_{ \ell^{\qm}_n \times \cdots \times \ell^{\qm}_n \to \R} \prec n^{\frac{1}{\qm'}}$. If, instead, $1\leq q_1, \dots, q_{m+1} \leq \infty$ with $q_i>2$ for at least one $1 \leq i \leq m+1$, we can take $T \colon \ell^{2}_n \times \cdots \times \ell^{2}_n \to \R$ as in \eqref{T unimodular escalar} and compose it with the identities $id^n_{q_i,2} \colon \ell^{q_i}_n \to \ell^2_n$.
\begin{eqnarray*}
\|T\circ (id^n_{q_1,2}, \dots, id^n_{q_{m+1},2})\|_{ \ell^{q_1}_n \times \cdots \times \ell^{q_{m+1}}_n \to \R} & \leq & \|T\|_{\ell^{2}_n \times \cdots \times \ell^{2}_n \to \R} \|id^n_{q_1,2}\| \cdots \|id^n_{q_{m+1},2}\| \\
&\prec & n^{\frac{1}{2}}\|id^n_{q_1,2}\| \cdots \|id^n_{q_{m+1},2}\|.
\end{eqnarray*}
Note that $id^n_{q_i,2}$ has norm one if $q_i \leq 2$ and norm $n^{\frac{1}{2}-\frac{1}{q_i}}$ if $q_i>2$.
%
%
As a consequence of the previous observations and the isometric correspondence between $m$-linear maps $ \ell^{q_1}_n \times \cdots \times \ell^{q_{m}}_n \to \ell^{q_{m+1}'}_n$ and $(m+1)$-linear maps $ \ell^{q_1}_n \times \cdots \times \ell^{q_{m+1}}_n \to \R$, we obtain the following lemma.

\begin{lemma}\label{KSZ ineq}
Let $m,n \in \N$, $1 \leq q_1, \dots, q_m, p \leq \infty$ and $\qm= \max\{q_1, \dots, q_m\}$.
Then there exists an $m$-linear map $T \colon \ell^{q_1}_n \times \cdots \times \ell^{q_m}_n \to \ell^{p}_n$ of the form
\begin{equation}\label{T unimodular}
T(z^{(1)}, \dots, z^{(m)})= \sum_{j_{m+1}=1}^n\sum_{j_1, \dots, j_m=1}^n \varepsilon_{j_1, \dots, j_{m+1}} z_{j_1}^{(1)} \cdots z_{j_m}^{(m)} e_{j_{m+1}}
\end{equation}
where $\varepsilon_{j_1, \dots, j_{m+1}}=\pm 1$, such that
$$
\|T\| \prec
\left\{
\begin{array} {c l}
n^{\frac{1}{\min\{\qm', p\}}} & \text{if $\qm, p' \leq 2$,}\\
n^{\frac{1}{2}}\|id^n_{q_1,2}\| \cdots \|id^n_{q_{m},2}\| \|id^n_{p',2}\|  & \text{otherwise.}
\end{array}
\right.
$$
\end{lemma}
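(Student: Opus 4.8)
The plan is to read the lemma off from the estimates already obtained for the scalar-valued form \eqref{T unimodular escalar}, transporting them to the vector-valued map \eqref{T unimodular} through the standard duality between multilinear maps and multilinear forms.

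First I would set $q_{m+1}:=p'$, so that the target becomes $\ell^p_n=\ell^{q_{m+1}'}_n$. The point is that an $m$-linear map $T\colon \ell^{q_1}_n\times\cdots\times\ell^{q_m}_n\to\ell^{q_{m+1}'}_n$ corresponds to the $(m+1)$-linear form $\Phi\colon \ell^{q_1}_n\times\cdots\times\ell^{q_{m+1}}_n\to\R$ given by $\Phi(z^{(1)},\dots,z^{(m+1)})=\langle T(z^{(1)},\dots,z^{(m)}),z^{(m+1)}\rangle$, and since $\ell^p_n$ is finite-dimensional (hence reflexive) this correspondence is isometric, $\|T\|=\|\Phi\|$. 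Expanding the pairing against $z^{(m+1)}$ shows that the unimodular form \eqref{T unimodular escalar} in $m+1$ variables corresponds exactly to the map \eqref{T unimodular}: the coefficient of $e_{j_{m+1}}$ is $\sum_{j_1,\dots,j_m}\varepsilon_{j_1,\dots,j_{m+1}}z^{(1)}_{j_1}\cdots z^{(m)}_{j_m}$, and the coefficients $\varepsilon_{j_1,\dots,j_{m+1}}=\pm1$ are unchanged.

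With this identification, the two cases of the statement are precisely the two estimates recorded above. If $\qm\le2$ and $p'\le2$, then all of $q_1,\dots,q_m,q_{m+1}=p'$ lie in $[1,2]$, and the interpolation/domination argument preceding the lemma gives $\|\Phi\|\prec n^{1/(\max\{\qm,p'\})'}$; using $(\max\{a,b\})'=\min\{a',b'\}$ I would rewrite the exponent as $\min\{\qm',(p')'\}=\min\{\qm',p\}$, the first case. Otherwise at least one exponent exceeds $2$, and composing the Boas form on $\ell^2_n\times\cdots\times\ell^2_n$ with the identities $id^n_{q_i,2}$ and $id^n_{p',2}$ (which leave the coefficients $\varepsilon$ untouched, being identities on $\R^n$) gives $\|\Phi\|\prec n^{1/2}\|id^n_{q_1,2}\|\cdots\|id^n_{q_m,2}\|\|id^n_{p',2}\|$, the second case.

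The only delicate point — and the closest thing to an obstacle — is purely the bookkeeping of conjugate exponents: verifying that the duality sends the target $\ell^p_n$ to the $(m+1)$-th factor $\ell^{p'}_n$, that the unimodular structure survives verbatim under both the duality and the composition with identities, and that $(\max\{\qm,p'\})'=\min\{\qm',p\}$. No analytic input beyond Boas's theorem, the interpolation step, and the norms $\|id^n_{q,2}\|$ already computed above is required.
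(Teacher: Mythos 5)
Your proposal is correct and follows essentially the same route as the paper: the paper also obtains the lemma by combining Boas's unimodular $(m+1)$-linear form with the interpolation estimate $\prec n^{1/\qm'}$ (all exponents in $[1,2]$), the composition with the identities $id^n_{q_i,2}$ otherwise, and the isometric correspondence between $m$-linear maps into $\ell^{q_{m+1}'}_n$ and $(m+1)$-linear forms with $q_{m+1}=p'$. Your explicit verification of the pairing, of the preservation of the unimodular coefficients, and of the identity $(\max\{\qm,p'\})'=\min\{\qm',p\}$ is exactly the bookkeeping the paper leaves implicit.
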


\begin{proof}[Proof of Proposition~\ref{proposicion caso infinito}]
Our goal is to show that $k_{\q, \infty}^{(n)}(r) \succ n^{\gamma}$ for some $\gamma>0$ independent of $n$,  in which case $k_{\q, \infty}(r)=\lim_n k_{\q, \infty}^{(n)}(r) = \infty$.  In virtue of Lemma~\ref{KSZ ineq}, there is an $m$-linear operator $T\colon \ell^{q_1}_{n} \times \cdots \times \ell^{q_m}_n \to \ell^\infty_n$ as in \eqref{T unimodular}
such that
$$
\|T\| \prec n^{ \frac{1}{\max(\qm',2)} +\sum_{q_i>2} \left( \frac{1}{2}-\frac{1}{q_i} \right)},
$$
where $\sum_{q_i>2} \left( \frac{1}{2}-\frac{1}{q_i} \right) =0$ if $\qm \leq 2$.
Now, since $|T(e_{i_1}, \dots, e_{i_m})| = (1, \dots, 1)$ for all $1 \leq i_1, \dots, i_m \leq n$ we have
$$
\left\| \left( \sum_{i_1, \dots, i_m=1}^n |T(e_{i_1}, \dots, e_{i_m})|^r \right)^{1/r} \right\|_{\ell^\infty} = \|((n^m, \dots, n^m))^{1/r}\|_{\ell^\infty} = n^{\frac{m}{r}}.
$$
Then,
\begin{eqnarray}
\nonumber n^{\frac{m}{r}}&=& \left\| \left( \sum_{i_1, \dots, i_m=1}^n |T(e_{i_1}, \dots, e_{i_m})|^r \right)^{1/r} \right\|_{\ell^\infty}
\leq k_{\q, \infty}^{(n)}(r) \|T\| \prod_{i=1}^m \left\|\left(\sum_{j=1}^n |e_j|^r \right)^{1/r} \right\|_{\ell^{q_i}}\\
&\prec& k_{\q, \infty}^{(n)}(r) n^{ \frac{1}{\max(\qm',2)} + \frac{1}{\min(q_1,2)}+\cdots +\frac{1}{\min(q_m,2)}} \label{acotacion q por 1}.
\end{eqnarray}
%
The statement follows  from this inequality.
\end{proof}

\section{Applications}\label{applications}

\subsection{Weighted vector-valued estimates for multilinear Calder\'on-Zygmund operators}\label{vector-valued CZ operators}

The study of multilinear Calder\'on-Zygmund theory finds its origins in the seventies with works of Coifman and Meyer, but a systematic treatment of this topic appears later with works of Grafakos and Torres \cite{GraTor1, GraTor2}. Recall the definition of a multilinear Calder\'on-Zygmund operator:
let $T:S({\mathbb R}^n)\times\dots\times S({\mathbb R}^n)\to
S'({\mathbb R}^n)$ be a multilinear operator initially defined on the $m$-fold
product of Schwartz spaces and taking values into the space of
tempered distributions;
we say that $T$ is an $m$-linear
Calder\'on-Zygmund operator if, for some $1\le q_1, \dots, q_m<\infty$ and $\frac{1}{m} \leq p < \infty$ satisfying $\frac{1}{p}=\frac{1}{q_1}+\dots+\frac{1}{q_m}$, it
extends to a bounded multilinear operator from
$L^{q_1}\times\dots\times L^{q_m}$ to $L^p$, and if
there exists a function
$K$ defined
off the diagonal $x=y_1=\dots=y_m$ in $({\mathbb
R}^n)^{m+1}$ satisfying the appropriate decay and smoothness conditions (see \cite{GraTor1, GraTor2}) and such that
$$T(f^1,\dots,f^m)(x) =\int_{\R^n} \cdots \int_{\R^n}
K(x,y_1,\dots,y_m) \prod_{i=1}^m f^i(y_i)\, dy_1\cdots dy_m
$$
for all $x \notin \cap_{i=1}^{m} \supp f_i$.
In \cite{GraTor1} it was shown that, if $\frac{1}{p}=\frac{1}{q_1}+\dots+\frac{1}{q_m}$ with $1 < q_1, \dots, q_m < \infty$, then an $m$-linear Calder\'on-Zygmund operator $T$ maps $L^{q_1}(\R^n)\times\dots\times L^{q_m}(\R^n)$ into $L^p(\R^n)$. If $1 \leq q_1, \dots, q_m < \infty$ and at least one $q_i=1$, then $T$ maps $L^{q_1}(\R^n)\times\dots\times L^{q_m}(\R^n)$ into $L^{p, \infty}(\R^n)$. Regarding the weighted norm inequalities, the first result was obtained in \cite{GraTor2} (see also \cite{PerTor}) where the authors proved that, if $1 < q_1, \dots, q_m < \infty$ and $w$ is a weight in the Muckenhoupt $A_{q_0}$ class for $q_0=\min\{q_1, \dots, q_m\}$, an $m$-linear Calder\'on-Zygmund operator $T$ maps $L^{q_1}(w)\times\dots\times L^{q_m}(w)$ into $L^p(w)$.
 The same approach of \cite{GraTor2} shows that $T$ maps
$L^{q_1}(w_1)\times \dots \times L^{q_m}(w_m) $ into $ L^{p}(\nu_{\vec w})$,
where $\nu_{\vec w}=\prod_{i=1}^m w_i^{p/q_i}$ and $w_i$ is in $A_{q_i}$.
As expected, if at least one $q_i=1$, then the weak endpoint estimate holds.
In \cite{LOPTT}, the authors developed the right class of multiple weights
for $m$-linear Calder\'on-Zygmund operators. We briefly review the weighted estimate proved in there for multilinear Calder\'on-Zygmund operators.
As usual, let  $1\le q_1, \dots , q_{m}<\infty$ and $\frac{1}{m} \leq p < \infty$ be such that $\frac{1}{p}=\frac{1}{q_1}+\dots+\frac{1}{q_m}$.
We say that $\vec w=(w_1,\dots,w_m)$ satisfies the \emph{multilinear $A_{\q}$ condition} if
\begin{equation}\label{multiap}
\sup_{Q}\Big(\frac{1}{|Q|}\int_Q\nu_{\vec
w}\Big)^{1/p}\prod_{i=1}^m\Big(\frac{1}{|Q|}\int_Q
w_i^{1-q'_i}\Big)^{1/q'_i}<\infty
\end{equation}
where the supremum is taken over all cubes $Q$
(when $q_i=1$, $\Big(\frac{1}{|Q|}\int_Q w_i^{1-q'_i}\Big)^{1/q'_i}$
is understood as $\displaystyle(\inf_Q w_i)^{-1}$). Now, if $\vec w$ satisfies the $A_{\q}$ condition and $1<q_1,\dots,q_m<\infty$, then an $m$-linear Calder\'on-Zygmund operator $T$ maps $L^{q_1}(w_1)\times \dots \times L^{q_m}(w_m) $ into $ L^{p}(\nu_{\vec w})$. If at least one $q_i=1$, then $T$ maps $L^{q_1}(w_1)\times \dots \times L^{q_m}(w_m) $ into $ L^{p,\infty}(\nu_{\vec w})$. It is shown that $\prod_{i=1}^m A_{q_i} \subseteq A_{\q}$ and that this inclusion is strict; then, the above weighted estimates improve those obtained in \cite{GraTor2}. In fact, if $T$ is the $m$-linear Riesz Transform, it was proved in \cite{LOPTT} that $A_{\q}$ is a necessary condition for such weighted estimate of $T$.

\subsubsection{Some known estimates and their comparison with Marcinkiewicz-Zygmund inequalities}\label{comparison}

In the context of vector-valued inequalities for multilinear  Calder\'on-Zygmund operators, the following result was proved in \cite[Corollary 3.3]{CruMarPer} by means of extrapolation techniques. See also \cite[Section 6.4]{CurGarMarPer} where this type of inequalities are obtained in the more general context of rearrangement invariant quasi-Banach function spaces.

\begin{theorem}[\cite{CruMarPer}]\label{teorema CruMarPer}
Let $T$ be a multilinear Calder\'on-Zygmund operator, $1 \leq q_1, \dots, q_m < \infty$, $1 < r_1, \dots, r_m < \infty$ and $0<p,r < \infty$ such that
$$
\frac{1}{p}=\frac{1}{q_1}+\cdots+\frac{1}{q_m}, \quad  \frac{1}{r}=\frac{1}{r_1}+\cdots+\frac{1}{r_m}.
$$
If $1<q_1, \dots, q_m < \infty$ and $w \in A_{q_0}$ for $q_0=\min\{q_1, \dots, q_m\}$, then
\begin{equation}\label{strong (p,p)}
\left\| \left( \sum_k |T(f^1_k, \dots, f^m_k)|^r \right)^{\frac{1}{r}} \right\|_{L^p(w)} \leq C \prod_{i=1}^m \left\| \left( \sum_k |f^i_k|^{r_i} \right)^{\frac{1}{r_i}} \right\|_{L^{q_i}(w)}.
\end{equation}
If at least one $q_i=1$ and $w \in A_1$, then
\begin{equation}\label{weak (p,p)}
\left\| \left( \sum_k |T(f^1_k, \dots, f^m_k)|^r \right)^{\frac{1}{r}} \right\|_{L^{p,\infty}(w)} \leq C \prod_{i=1}^m \left\| \left( \sum_k |f^i_k|^{r_i} \right)^{\frac{1}{r_i}} \right\|_{L^{q_i}(w)}.
\end{equation}
\end{theorem}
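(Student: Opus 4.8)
The plan is to deduce both inequalities from the scalar weighted bounds for $T$ recalled in Section~\ref{vector-valued CZ operators} by means of the multilinear Rubio de Francia extrapolation theorem of \cite{GraMar}. The starting point is that, for $1<q_1,\dots,q_m<\infty$ with $\frac1p=\sum_i\frac1{q_i}$ and $w\in A_{q_0}$ (here $q_0=\min_i q_i$), one has $\nu_{\vec w}=w$ for the constant tuple $\vec w=(w,\dots,w)$, together with the strong-type estimate $\|T(f^1,\dots,f^m)\|_{L^p(w)}\le C\prod_i\|f^i\|_{L^{q_i}(w)}$; when some $q_i=1$ and $w\in A_1$ the corresponding weak-type estimate into $L^{p,\infty}(w)$ holds \cite{GraTor2, LOPTT}. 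These play the role of the seed for the extrapolation.

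First I would treat the strong-type inequality \eqref{strong (p,p)}. The key observation is that it suffices to prove it on the \emph{diagonal} $q_i=r_i$, $p=r$ and for \emph{every} $w\in A_{r_0}$, with $r_0=\min_i r_i$; the general exponents are then recovered by extrapolation. On the diagonal the inequality is elementary: the left-hand side raised to the $r$-th power splits as $\sum_k\|T(f^1_k,\dots,f^m_k)\|_{L^r(w)}^r$, so the scalar bound at the exponents $(r_1,\dots,r_m,r)$ gives
\[
\sum_k\|T(f^1_k,\dots,f^m_k)\|_{L^r(w)}^r\le C^r\sum_k\prod_{i=1}^m\|f^i_k\|_{L^{r_i}(w)}^r .
\]
Because $\frac1r=\sum_i\frac1{r_i}$ we have $\sum_i\frac r{r_i}=1$, so Hölder's inequality for the sum over $k$ with exponents $r_i/r$ yields
\[
\sum_k\prod_{i=1}^m\|f^i_k\|_{L^{r_i}(w)}^r\le\prod_{i=1}^m\Big(\sum_k\|f^i_k\|_{L^{r_i}(w)}^{r_i}\Big)^{r/r_i}=\prod_{i=1}^m\Big\|\big(\sum_k|f^i_k|^{r_i}\big)^{1/r_i}\Big\|_{L^{r_i}(w)}^r ,
\]
and taking $r$-th roots proves the diagonal estimate for all $w\in A_{r_0}$.

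Next I would apply the multilinear extrapolation theorem to the family $\mathcal F$ of all tuples $(F,G_1,\dots,G_m)$ with $F=(\sum_k|T(f^1_k,\dots,f^m_k)|^r)^{1/r}$ and $G_i=(\sum_k|f^i_k|^{r_i})^{1/r_i}$. The diagonal step shows precisely that $\|F\|_{L^r(w)}\le C\prod_i\|G_i\|_{L^{r_i}(w)}$ for every $w\in A_{r_0}$, which is the hypothesis of the extrapolation theorem with seed exponents $(r_1,\dots,r_m)$ (note $\frac1r=\sum_i\frac1{r_i}$). Its conclusion furnishes, for every choice of $1<q_1,\dots,q_m<\infty$ with $\frac1p=\sum_i\frac1{q_i}$ and every $w\in A_{q_0}$, the bound $\|F\|_{L^p(w)}\le C\prod_i\|G_i\|_{L^{q_i}(w)}$, which is exactly \eqref{strong (p,p)}.

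Finally, for the weak-type inequality \eqref{weak (p,p)} I would run the same scheme but feed the weak endpoint estimate of \cite{LOPTT} into the weak-type version of the extrapolation machinery. I expect this last case to be the main obstacle: the quasi-norm $\|\cdot\|_{L^{p,\infty}(w)}$ is not additive, so the clean diagonal splitting used above breaks down and must be replaced either by a weak-type extrapolation statement taken directly from the extrapolation theory or by an argument that passes through the strong-type bounds already obtained together with a careful endpoint analysis. Apart from this endpoint subtlety, the whole argument reduces to the scalar weighted theory plus Hölder's inequality, with all the genuine weight lifting delegated to the extrapolation theorem.
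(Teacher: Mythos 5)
You should first note what you are being compared against: the paper does not prove Theorem~\ref{teorema CruMarPer} at all --- it is quoted from \cite[Cor.~3.3]{CruMarPer}, where it is established ``by means of extrapolation techniques'' --- so your proposal has to be measured against that argument. For the strong-type estimate \eqref{strong (p,p)} your scheme is correct and is essentially the one used in the literature: the diagonal case $q_i=r_i$, $p=r$ follows by splitting the $r$-th power of the left-hand side, applying the scalar weighted bound of \cite{GraTor2} and H\"older in $k$ (note this all works even when $r<1$, which can happen), and extrapolation then gives general exponents. Two details need patching, though. First, the multilinear extrapolation theorem of \cite{GraMar} is stated for $m$-tuples of weights: its hypothesis asks for the estimate with $w_i\in A_{r_i}$ arbitrary and target weight $\nu_{\vec w}=\prod_i w_i^{r/r_i}$, not merely a single $w\in A_{r_0}$. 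Your diagonal computation goes through verbatim in that setting (using the multi-weight scalar bound recalled in Section~\ref{vector-valued CZ operators}), and the single-weight conclusion then follows by taking $w_i=w\in A_{q_0}\subseteq A_{q_i}$, so that $\nu_{\vec w}=w$; but as written your verification does not match the hypothesis of the theorem you invoke. Second, that theorem is formulated for operators, whereas you apply it to the family of tuples $(F,G_1,\dots,G_m)$; this is legitimate because the proof uses only the inequality between the functions, but it must be said.

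The weak-type estimate \eqref{weak (p,p)} is a genuine gap, and it is more serious than your closing paragraph suggests. The obstruction is not merely that $\|\cdot\|_{L^{p,\infty}(w)}$ fails to be additive: no $A_p$-type extrapolation can pass from your diagonal estimate (in which every exponent is strictly larger than $1$) to an endpoint estimate with some $q_i=1$ and $w\in A_1$. Already in the linear case the operator $M\circ M$ is bounded on $L^p(w)$ for every $1<p<\infty$ and every $w\in A_p$, yet it is not of weak type $(1,1)$ even without weights; hence ``strong bounds at exponents above $1$ for all $A_p$ weights'' never yield weighted endpoint weak-type bounds by abstract extrapolation, and the endpoint behaviour of $T$ itself must enter the argument. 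In \cite{CruMarPer} this is handled by a different mechanism: the Coifman--Fefferman type estimate $\int |T(\vec f)|^{p_0}w \le C\int \bigl(\prod_i Mf^i\bigr)^{p_0}w$ for $w\in A_\infty$ (see \cite{GraTor2, PerTor}) is extrapolated as a family of pairs via $A_\infty$ extrapolation, whose conclusions --- unlike those of $A_p$ extrapolation --- persist in quasi-Banach spaces such as $L^{p,\infty}(w)$ (cf.\ also \cite{CurGarMarPer}); after that, \eqref{weak (p,p)} reduces to H\"older's inequality in weak Lebesgue spaces together with the weighted Fefferman--Stein vector-valued maximal inequality, whose endpoint case $q_i=1$, $w\in A_1$ is known. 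Your sketch contains none of these ingredients, and ``feed the weak endpoint estimate into the weak-type version of the extrapolation machinery'' does not identify a valid mechanism, so the second half of the theorem remains unproved in your proposal.
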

The estimate \eqref{strong (p,p)} was obtained independently in \cite{GraMar} as a particular case of the inequality \eqref{strong (p,p) m weights} for $m$-tuples of weights. Also a weaker version of \eqref{weak (p,p)} was obtained as a consequence of a multilinear Marcinkiewicz-Zygmund inequality for $r=2$.

\begin{theorem}[\cite{GraMar}]\label{teorema GraMar}
Let $T$ be a multilinear Calder\'on-Zygmund operator, $1 \leq q_1, \dots, q_m < \infty$, $1 < r_1, \dots, r_m < \infty$ and $0<p,r < \infty$ such that
$$
\frac{1}{p}=\frac{1}{q_1}+\cdots+\frac{1}{q_m}, \quad  \frac{1}{r}=\frac{1}{r_1}+\cdots+\frac{1}{r_m}.
$$
If $1 < q_1, \dots, q_m < \infty$ and $(w_1^{q_1}, \dots, w_{m}^{q_m}) \in  A_{q_1} \times \cdots \times A_{q_m}$, then
\begin{equation}\label{strong (p,p) m weights}
\left\| \left( \sum_k |T(f^1_k, \dots, f^m_k)|^r \right)^{\frac{1}{r}} \right\|_{L^p(w_1^p \cdots w_m^p)} \leq C \prod_{i=1}^m \left\| \left( \sum_k |f^i_k|^{r_i} \right)^{\frac{1}{r_i}} \right\|_{L^{q_i}(w_i^{q_i})}.
\end{equation}
If at least one $q_i=1$ and $w \in A_1$, then
\begin{equation}\label{weak (p,p) GraMar}
\left\| \left( \sum_k |T(f^1_k, \dots, f^m_k)|^2 \right)^{\frac{1}{2}} \right\|_{L^{p,\infty}(w)} \leq C \prod_{i=1}^m \left\| \left( \sum_k |f^i_k|^{2} \right)^{\frac{1}{2}} \right\|_{L^{q_i}(w)}.
\end{equation}
\end{theorem}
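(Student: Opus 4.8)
The plan is to treat the strong estimate \eqref{strong (p,p) m weights} and the weak endpoint \eqref{weak (p,p) GraMar} by two different routes, matching the two distinct exponent relations that appear: the genuinely diagonal relation $1/r=1/r_1+\cdots+1/r_m$ in the first, versus the ``all exponents equal to $2$'' structure in the second.

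For \eqref{strong (p,p) m weights} I would argue by multilinear Rubio de Francia extrapolation. The input is the scalar weighted bound reviewed above (the approach of \cite{GraTor2}, sharpened in \cite{LOPTT}): for every $m$-tuple $(W_1,\dots,W_m)$ with $W_i\in A_{q_i}$ one has
$$
\|T(f^1,\dots,f^m)\|_{L^p(\nu_{\vec W})}\le C\prod_{i=1}^m\|f^i\|_{L^{q_i}(W_i)},\qquad \nu_{\vec W}=\prod_{i=1}^m W_i^{p/q_i},
$$
with $C$ depending only on the $A_{q_i}$-characteristics of the $W_i$. Taking $W_i=w_i^{q_i}$ reproduces exactly the weighted pair on the two sides of \eqref{strong (p,p) m weights}, since then $\nu_{\vec W}=w_1^p\cdots w_m^p$. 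The decisive structural point is that $1/r=\sum_i 1/r_i$ is precisely the relation under which the pointwise product $(\xi^1,\dots,\xi^m)\mapsto(\xi^1_k\cdots\xi^m_k)_k$ maps $\ell^{r_1}\times\cdots\times\ell^{r_m}$ into $\ell^r$ with norm at most $1$, by H\"older; this is what lets one upgrade the scalar bound to its $\ell^r$-valued companion. I would then invoke the multilinear extrapolation machinery in the form used in \cite{CruMarPer}: from the full family of weighted bounds above one obtains, for the same weight class, the vector-valued inequality \eqref{strong (p,p) m weights}. The engine is the Rubio de Francia iteration algorithm, run separately in each of the $m$ variables to build $A_{q_i}$-weights adapted to the sequences $(f^i_k)_k$, combined with a duality step in $L^p(\nu_{\vec W})$ in which the exponent $r$ enters through $1/r+1/r'=1$.

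For \eqref{weak (p,p) GraMar} the exponents are all equal to $2$, so this is not of diagonal type and extrapolation is not needed; it is instead a direct consequence of the Marcinkiewicz--Zygmund inequality for $r=2$. When some $q_i=1$ and $w\in A_1$, the scalar theory of \cite{LOPTT} gives the weak bound $T\colon L^{q_1}(w)\times\cdots\times L^{q_m}(w)\to L^{p,\infty}(w)$, because $w\in A_1\subseteq A_{q_i}$ forces $(w,\dots,w)\in\prod_i A_{q_i}\subseteq A_{\q}$ and $\nu_{(w,\dots,w)}=w$. To this operator I would apply the weak-type ($L^{p,\infty}$-valued) multilinear Marcinkiewicz--Zygmund inequality for $r=2$ established in \cite{BomPerVil, GraMar}, which controls the full multi-index sum $\sum_{k_1,\dots,k_m}|T(f^1_{k_1},\dots,f^m_{k_m})|^2$. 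Since the single-index (diagonal) sum $\sum_k|T(f^1_k,\dots,f^m_k)|^2$ is dominated termwise by this multi-index sum, the weak bound \eqref{weak (p,p) GraMar} follows at once.

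The main obstacle I anticipate lies in the strong estimate: verifying that the multilinear extrapolation theorem indeed delivers the $\ell^r$-valued inequality with the correct relation $1/r=\sum_i 1/r_i$ and for the multi-weight class $\prod_i A_{q_i}$, rather than for a single common weight. This requires the Rubio de Francia algorithm and the accompanying duality to be carried out simultaneously in all $m$ inputs while tracking how the $A_{q_i}$-constants propagate; the endpoint case in which one variable sits at $q_i=1$ (handled separately above through the weak-type inequality) is the most delicate point, since there the extrapolation argument degenerates and one must fall back on the Marcinkiewicz--Zygmund comparison.
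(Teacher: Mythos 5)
The paper itself gives no proof of this theorem: it is quoted as a known result from \cite{GraMar}, and the paper's commentary in Section~\ref{comparison} records precisely how it is established in that source. Your proposal reconstructs that argument correctly and in essentially the same way: the strong multi-weight estimate \eqref{strong (p,p) m weights} via Rubio de Francia extrapolation run in each of the $m$ variables starting from the scalar weighted bounds (with the one caveat that the engine must be the $m$-tuple-of-weights extrapolation theorem of \cite{GraMar} itself, since the one-weight extrapolation of \cite{CruMarPer} only yields \eqref{strong (p,p)}), and the weak endpoint \eqref{weak (p,p) GraMar} by applying the weak-type $r=2$ Marcinkiewicz--Zygmund inequality (Theorem~6(b) of \cite{GraMar}; cf.\ Proposition~\ref{MZ multilineal para p,q>0}(ii) here) to the scalar bound $T\colon L^{q_1}(w)\times\cdots\times L^{q_m}(w)\to L^{p,\infty}(w)$ and dominating the diagonal sum termwise by the multi-index sum.
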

It is worth mentioning that, in the above weighted vector-valued estimates, the multilinear $A_{\q}$ condition is not considered. We will consider this appropriate class of multiple weights in Corollary~\ref{vector-valued for CZ}, where vector-valued inequalities for Calder\'on-Zygmund operators are obtained.
Also, as we emphasize below, the vector-valued inequalities that we obtain are significantly different from those stated in \eqref{strong (p,p)}, \eqref{weak (p,p)}, \eqref{strong (p,p) m weights}. In our case, the sum on the left-hand side is replaced by a multi-indexed sum over $k_1, \dots, k_m$ and we consider only one power $r$ instead of the powers $1<r_1, \dots, r_m < \infty$.

At this point we would like to stress the relation $ \frac{1}{r}=\frac{1}{r_1}+\cdots+\frac{1}{r_m}$ that appears in the hypotheses of the previous theorems which, by the way, shows why the estimate \eqref{weak (p,p) GraMar} is weaker than \eqref{weak (p,p)}. When proving estimates like \eqref{strong (p,p) m weights}, one is interested in the study of the following inequalities:
\begin{equation}\label{MZ multilineal para un k}
\left\| \left( \sum_k |T(f^1_k, \dots, f^m_k)|^s \right)^{\frac{1}{s}} \right\|_{L^p(\nu)} \leq C \prod_{i=1}^m \left\| \left( \sum_k |f^i_k|^{r_i} \right)^{\frac{1}{r_i}} \right\|_{L^{q_i}(\mu_i)}.
\end{equation}
If in the previous inequality we put $f^i_k=f^i \in L^{q_i}(\mu_i)$ for $1 \leq k \leq n$, $1\leq i \leq m$ and $f^i_k=0$ otherwise, we obtain
\begin{equation}\label{optimalidad para un k}
n^{\frac{1}{s}} \|T(f^1, \dots, f^m)\|_p \leq C \prod_{i=1}^m n^{\frac{1}{r_i}} \|f^i\|_{L^{q_i}(\mu_i)}
\end{equation}
and, since $C$ is independent of $n$, this forces
$
\frac{1}{s} \leq \frac{1}{r_1}+\cdots+\frac{1}{r_m}.
$
Thus, the estimate \eqref{MZ multilineal para un k} is optimal when $s=r$, where $r$ satisfies $\frac{1}{r} = \frac{1}{r_1}+\cdots+\frac{1}{r_m}$. In the case of \eqref{weak (p,p) GraMar} the relation between the powers is not optimal as in \eqref{weak (p,p)}. In fact, if \eqref{weak (p,p)} holds for $r=2$ then $2 \leq r_i$ for all $1 \leq i \leq m$ and, since $\ell^2 \hookrightarrow \ell^{r_i}$ with $\|\cdot\|_{\ell^{r_i}} \leq \|\cdot\|_{\ell^2}$, then
$$
 \left\| \left( \sum_k |f^i_k|^{r_i} \right)^{\frac{1}{r_i}} \right\|_{L^{q_i}(\mu_i)} \leq  \left\| \left( \sum_k |f^i_k|^{2} \right)^{\frac{1}{2}} \right\|_{L^{q_i}(\mu_i)},
$$
which yields the (hence, weaker) estimate in \eqref{weak (p,p) GraMar}.

Now, the Marcinkiewicz-Zygmund inequalities we were studying in the previous sections have a significant difference with those in \eqref{MZ multilineal para un k}. In the former ones the sum
$$\left( \sum_{k_1, \dots, k_m} |T(f^1_{k_1}, \dots, f^m_{k_m})|^r \right)^{\frac{1}{r}}$$ ranges over the indices $k_1, \dots, k_m$, while in \eqref{MZ multilineal para un k} we sum over the diagonal, hence only one index $k$. This produces another optimal relation between the powers $r, r_1, \dots, r_m$.
 Indeed, suppose we are interested in estimates of the form
\begin{equation}\label{optimal powers MZ multilineal}
\left\| \left( \sum_{k_1, \dots, k_m} |T(f^1_{k_1}, \dots, f^m_{k_m})|^s \right)^{\frac{1}{s}}  \right\|_{L^p(\nu)} \leq C \prod_{i=1}^m \left\| \left( \sum_{k_i} |f^i_{k_i}|^{r_i} \right)^{\frac{1}{r_i}} \right\|_{L^{q_i}(\mu_i)}.
\end{equation}
If we choose $f^i_{k_i} = f^i \in L^{q_i}(\mu_i)$ for $1 \leq k_i \leq n$, $1 \leq i \leq m$ and $f^i_{k_i}=0$ otherwise, we have
$$
n^{\frac{m}{s}} \|T(f^1, \dots, f^m)\|_p \leq C \prod_{i=1}^m n^{\frac{1}{r_i}} \|f^i\|_{L^{q_i}(\mu_i)},
$$
from where we get $\frac{m}{s} \leq \frac{1}{r_1}+\cdots+\frac{1}{r_m}$. Also, if in \eqref{optimal powers MZ multilineal} we take any sequence $\{f^1_{k_1}\}_{k_1} \subset L^{q_1}(\mu_1)$ and, for each $2 \leq i \leq m$, we put $f^i_1=f^i \in L^{q_i}(\mu_i)$ and $f^i_{k_i}=0$ for $k_i\geq 2$, then we obtain
$$
\left\| \left( \sum_{k_1} |\tilde{T}(f^1_{k_1})|^s \right)^{\frac{1}{s}}  \right\|_{L^p(\nu)} \leq C \left(  \prod_{i=2}^m \|f^i\|_{L^{q_i}(\mu_i)} \right) \left\| \left( \sum_{k_1} |f^1_{k_1}|^{r_1} \right)^{\frac{1}{r_1}} \right\|_{L^{q_1}(\mu_1)}
$$
where $\tilde{T}\colon L^{q_1}(\mu_1) \to L^p(\nu)$  is the linear operator $\tilde{T}(\cdot) = T(\cdot, f^2, \dots, f^m)$. In virtue of \eqref{optimalidad para un k} (when $m=1$) this last inequality gives $\frac{1}{s}\leq \frac{1}{r_1}$. Analogously, $\frac{1}{s}\leq \frac{1}{r_i}$ for all $1 \leq i \leq m$. In sum, when we are dealing with inequality \eqref{optimal powers MZ multilineal} we have the following conditions over the powers $s, r_1, \dots, r_m$,
$$
\frac{m}{s} \leq \frac{1}{r_1}+\cdots+\frac{1}{r_m} \quad \text{and} \quad \frac{1}{s}\leq \frac{1}{r_i} \quad \text{for all $1 \leq i \leq m$.}
$$
This shows that the inequality  \eqref{optimal powers MZ multilineal} is optimal when $s=r_1=\cdots = r_m=r$, which is just the case treated in the Marcinkiewicz-Zygmund inequalities.

Camil Muscalu pointed out to us that, in the unweighted case, Marcinkiewicz-Zygmund estimates for multilinear Calder\'on-Zygmund operators can be deduced from more general multiple vector-valued inequalities obtained in his recent works with Cristina Benea. In \cite{BenMus, BenMus2}, the authors developed a powerful method, which they called the helicoidal method, that allows to obtain vector-valued inequalities in harmonic analysis. They apply this method to obtain vector-valued inequalities for paraproducts (which can be regarded as bilinear multiplier operators) and the bilinear Hilbert transform. We also refer to \cite{CulDiPOu, DiPOu} where, with different methods, the authors obtain vector-valued inequalities for multilinear multiplier operators.
As far as we know, Marcinkiewicz-Zygmund inequalities for multilinear Calder\'on-Zygmund operators were not addressed in the weighted case. Also, although it might be possible to derive similar estimates via the helicoidal method, our approach is completely different.

\subsubsection{Weighted vector-valued estimates}
We state now as a proposition a result that is partially demonstrated in the proof of Theorem~\ref{caracterizacion caso menor 2}.

\begin{proposition}\label{MZ multilineal para p,q>0}
Let $0<p, q_1, \dots, q_m < r<2$ or $r=2$ and $0<p, q_1, \dots, q_m < \infty$ and, for each $1 \leq i \leq m$, consider $\{f^i_{k_i}\}_{k_i} \subset L^{q_i}(\mu_i)$. Then, there exists a constant $C>0$ such that the following estimates hold.
\begin{enumerate}
\item[\rm (i)] If $T\colon L^{q_1}(\mu_1) \times \cdots \times L^{q_m}(\mu_m) \to L^p(\nu)$ then
\begin{equation}\label{strong MZ multilineal p,q>0}
\left\| \left( \sum_{k_1, \dots, k_m} |T(f^1_{k_1}, \dots, f^m_{k_m})|^r \right)^{\frac{1}{r}}  \right\|_{L^p(\nu)} \leq C \|T\| \prod_{i=1}^m \left\| \left( \sum_{k_i} |f^i_{k_i}|^{r} \right)^{\frac{1}{r}} \right\|_{L^{q_i}(\mu_i)}.
\end{equation}
\item[\rm (ii)] If $T\colon L^{q_1}(\mu_1) \times \cdots \times L^{q_m}(\mu_m) \to L^{p,\infty}(\nu)$ then
\begin{equation}\label{weak MZ multilineal p,q>0}
\left\| \left( \sum_{k_1, \dots, k_m} |T(f^1_{k_1}, \dots, f^m_{k_m})|^r \right)^{\frac{1}{r}}  \right\|_{L^{p, \infty}(\nu)} \leq C e^{\frac{1}{p}} \|T\|_{weak} \prod_{i=1}^m \left\| \left( \sum_{k_i} |f^i_{k_i}|^{r} \right)^{\frac{1}{r}} \right\|_{L^{q_i}(\mu_i)}.
\end{equation}
\end{enumerate}
\end{proposition}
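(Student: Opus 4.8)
The plan is to handle (i) and (ii) separately: the strong estimate is essentially already proved, and the weak estimate will be reduced to it by means of Kolmogorov's inequality, the factor $e^{1/p}$ emerging from an optimization in an auxiliary exponent.

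For (i), the inequality \eqref{strong MZ multilineal p,q>0} is exactly the chain of estimates carried out in part (a) of the proof of Theorem~\ref{caracterizacion caso menor 2}. That argument uses only the $m$-linearity and boundedness of $T$, Fubini's theorem, the lower bound of Lemma~\ref{lemma r-stable} (with $C=c_{r,p}^m$, available since $0<p\le r$ under the present hypotheses), and the identity \eqref{ecuacion r estables} applied to the exponents $p$ and $q_i$. None of these steps requires $p,q_i\ge 1$ (the continuous Minkowski and H\"older steps use only the ratios $p/q_i\ge 1$ resp. $q_i/p\ge 1$), so repeating them verbatim yields \eqref{strong MZ multilineal p,q>0} with $C=c_{r,p}^{-m}\prod_i c_{r,\max\{q_i,p\}}$, valid in the two stated ranges.

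For (ii), set $G=\big(\sum_{k_1,\dots,k_m}|T(f^1_{k_1},\dots,f^m_{k_m})|^r\big)^{1/r}$ and, with mutually independent $r$-stable variables $\{w_{k_i}\}_{k_i}$ as in Lemma~\ref{lemma r-stable}, write $F(\omega,t)=T\big(\sum_{k_1}f^1_{k_1}w_{k_1}(t_1),\dots,\sum_{k_m}f^m_{k_m}w_{k_m}(t_m)\big)(\omega)$, so that $F(\cdot,t)=T(g^1_t,\dots,g^m_t)$ with $g^i_t=\sum_{k_i}f^i_{k_i}w_{k_i}(t_i)$. Fix an auxiliary exponent $0<s<\min\{p,r\}$; applying Lemma~\ref{lemma r-stable} with $s$ in place of $p$ gives, pointwise in $\omega$, the bound $G(\omega)^s\le c_{r,s}^{-ms}\int_{[0,1]^m}|F(\omega,t)|^s\,dt$. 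For an arbitrary set $E$ with $0<\nu(E)<\infty$ I integrate this over $E$ and interchange the integrals in $\omega$ and $t$ by Tonelli. For each fixed $t$ the single value $F(\cdot,t)$ satisfies $\|F(\cdot,t)\|_{L^{p,\infty}(\nu)}\le \|T\|_{weak}\prod_i\|g^i_t\|_{L^{q_i}(\mu_i)}$, so Kolmogorov's inequality $\int_E|h|^s\,d\nu\le \frac{p}{p-s}\,\nu(E)^{1-s/p}\,\|h\|_{L^{p,\infty}(\nu)}^s$ applies to $h=F(\cdot,t)$. Factoring the resulting integral over $[0,1]^m$ by Tonelli and bounding each one-dimensional factor $\int_0^1\|g^i_{t}\|_{L^{q_i}(\mu_i)}^s\,dt_i$ exactly as in part (i) (continuous Minkowski when $q_i<s$, H\"older when $s\le q_i$, then \eqref{ecuacion r estables}) leads to $\int_E G^s\,d\nu\le \frac{p}{p-s}\,\nu(E)^{1-s/p}\,K(s)^s\,\|T\|_{weak}^s\prod_i\big\|(\sum_{k_i}|f^i_{k_i}|^r)^{1/r}\big\|_{L^{q_i}(\mu_i)}^s$, where $K(s)=c_{r,s}^{-m}\prod_i c_{r,\max\{q_i,s\}}$. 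Dividing by $\nu(E)^{1-s/p}$, taking the supremum over $E$ and invoking the elementary bound $\|g\|_{L^{p,\infty}}^s\le \sup_E \nu(E)^{s/p-1}\int_E|g|^s\,d\nu$ yields \eqref{weak MZ multilineal p,q>0} with constant $(\tfrac{p}{p-s})^{1/s}K(s)$. Since the moments $c_{r,s}$ converge to a finite positive limit as $s\to 0^+$ (so $K(s)$ stays bounded and $\max\{q_i,s\}\to q_i$) while $(\tfrac{p}{p-s})^{1/s}\to e^{1/p}$, letting $s\to 0^+$ gives the asserted factor $e^{1/p}$ with a constant $C$ independent of $T$ and of the functions.

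I expect the main obstacle to be precisely the weak estimate (ii). Unlike the $L^p$ case, one cannot pull the $L^{p,\infty}(\nu)$ quasi-norm inside the integral over $[0,1]^m$, because $L^{p,\infty}$ fails the integral triangle inequality; this is exactly where testing against finite-measure sets $E$ and using Kolmogorov's inequality with the auxiliary exponent $s$ becomes essential. The sharp constant $e^{1/p}$ appears only in the limit $s\to 0^+$, so the delicate point is to verify that the stable moments $c_{r,s}$ (hence $K(s)$) remain controlled as $s\to 0^+$, which they do.
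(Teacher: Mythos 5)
Your proposal is correct and follows essentially the same route as the paper: part (i) is the same observation that the argument of Theorem~\ref{caracterizacion caso menor 2}(a) never uses $p,q_i\geq 1$, and part (ii) rests on the same restricted-$L^s$ (Kolmogorov) characterization of $L^{p,\infty}$ with the limit $s\to 0^+$ producing the factor $e^{1/p}$. The only cosmetic difference is that the paper packages the strong-type step as item (i) applied to the truncated operator $T_E=\chi_E T$, whereas you inline the stable-variable argument for each fixed $t$; your explicit verification that the constant $K(s)$ stays bounded as $s\to 0^+$ (because $c_{r,s}$ converges to a finite positive limit) is a point the paper's proof leaves implicit, and is needed for the limiting argument to yield the stated constant.
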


\begin{proof}
The inequality \eqref{strong MZ multilineal p,q>0} was proved in Theorem~\ref{caracterizacion caso menor 2} for $1 \leq p, q_1, \dots, q_m < r<2$ (see the assertion (a) inside the proof). It is easy to check that, with exactly the same proof, the statement remains valid for $0<p, q_1, \dots, q_m < r<2$. The case $r=2$ also follows the same proof, with slight modifications in the involved constants. See the case $r=2<p$ in  Lemma~\ref{lemma r-stable} (see also \cite[Thm. 6 (a)]{GraMar}).
Then, we only need to prove (ii) and, for this purpose, we follow ideas of \cite[Thm. 6 (b)]{GraMar} where the particular case $r=2$ is addressed. Recall that, for $0<s<p<\infty$,
\begin{equation}\label{caracterizacion weak Lp}
\|f\|_{L^{p,\infty}(\nu)} \leq \sup_{0<\nu(E)<\infty} \nu(E)^{\frac{1}{p}-\frac{1}{s}} \left(\int_E |f|^s d\nu\right)^{1/s} \leq \left( \frac{p}{p-s}\right)^{\frac{1}{s}} \|f\|_{L^{p,\infty}(\nu)}.
\end{equation}
Then,
\begin{eqnarray}\label{acotacion por T_E}
\nonumber && \left\| \left( \sum_{k_1, \dots, k_m} |T(f^1_{k_1}, \dots, f^m_{k_m})|^r \right)^{\frac{1}{r}}  \right\|_{L^{p, \infty}(\nu)} \leq\\
\nonumber &\leq& \sup_{0<\nu(E)<\infty} \nu(E)^{\frac{1}{p}-\frac{1}{s}} \left( \int_E \left( \sum_{k_1, \dots, k_m} |T(f^1_{k_1}, \dots, f^m_{k_m})(\omega)|^r \right)^{\frac{s}{r}} d\nu(\omega) \right)^{1/s}\\
&=& \sup_{0<\nu(E)<\infty} \nu(E)^{\frac{1}{p}-\frac{1}{s}} \left( \int \left( \sum_{k_1, \dots, k_m} |\chi_E(\omega) T(f^1_{k_1}, \dots, f^m_{k_m})(\omega)|^r \right)^{\frac{s}{r}} d\nu(\omega) \right)^{1/s}.
\end{eqnarray}
Now for each measurable set $E$ of positive and finite $\nu$-measure, consider $T_E\colon L^{q_1}(\mu_1) \times \cdots \times L^{q_m}(\mu_m) \to L^s(\nu)$ defined by
$$
T_E(f^1, \dots, f^m)(\omega)= \chi_E(\omega) T(f^1, \dots, f^m)(\omega)
$$
(the fact that $T_E$ takes values in $L^s(\nu)$ follows from \eqref{caracterizacion weak Lp}). In virtue of the second inequality in \eqref{caracterizacion weak Lp} we have
\begin{eqnarray*}
 \nu(E)^{\frac{1}{p}-\frac{1}{s}} \|T_E(f^1, \dots, f^m)\|_{L^s(\nu)} &\leq& \left( \frac{p}{p-s}\right)^{\frac{1}{s}} \|T(f^1, \dots, f^m)\|_{L^{p, \infty}(\nu)}\\
 &\leq&  \left( \frac{p}{p-s}\right)^{\frac{1}{s}} \|T\|_{weak} \|f^1\|_{L^{q_1}(\mu_1)} \cdots \|f^m\|_{L^{q_m}(\mu_m)}
\end{eqnarray*}
and, consequently,  $\nu(E)^{\frac{1}{p}-\frac{1}{s}} \|T_E\| \leq  \left( \frac{p}{p-s}\right)^{\frac{1}{s}} \|T\|_{weak}$. Going back to \eqref{acotacion por T_E}, we obtain
\begin{eqnarray*}
&& \left\| \left( \sum_{k_1, \dots, k_m} |T(f_1^{k_1}, \dots, f_m^{k_m})|^r \right)^{\frac{1}{r}}  \right\|_{L^{p, \infty}(\nu)} \leq\\
&\leq& \sup_{0<\nu(E)<\infty} \nu(E)^{\frac{1}{p}-\frac{1}{s}} \left\| \left( \sum_{k_1, \dots, k_m} |T_E(f^1_{k_1}, \dots, f^m_{k_m})|^r \right)^{\frac{1}{r}} \right\|_s\\
&\leq&  \sup_{0<\nu(E)<\infty} \nu(E)^{\frac{1}{p}-\frac{1}{s}} C \|T_E\| \prod_{i=1}^m \left\| \left( \sum_{k_i} |f^i_{k_i}|^{r} \right)^{\frac{1}{r}} \right\|_{L^{q_i}(\mu_i)} \quad \text{(by item (i))}\\
&\leq& C  \left( \frac{p}{p-s}\right)^{\frac{1}{s}} \|T\|_{weak}  \prod_{i=1}^m \left\| \left( \sum_{k_i} |f^i_{k_i}|^{r} \right)^{\frac{1}{r}} \right\|_{L^{q_i}(\mu_i)}
\end{eqnarray*}
and since $0<s<p$ was arbitrary, letting $s \to 0$ we obtain the desired estimate.
\end{proof}

As a consequence of the previous proposition, we obtain the following vector-valued estimates for multilinear Calder\'on-Zygmund operators which should be compared with those of Theorems~\ref{teorema CruMarPer} and \ref{teorema GraMar}.

\begin{corollary}\label{vector-valued for CZ}
Let $T$ be a multilinear Calder\'on-Zygmund operator, $1 \leq q_1, \dots, q_m <r < 2$ (or $r=2$ and $1 \leq q_1, \dots, q_m <\infty$) and $p>0$ such that
$
\frac{1}{p}=\frac{1}{q_1}+\cdots+\frac{1}{q_m}.
$
Suppose $\vec w=(w_1,\dots,w_m)$ satisfies the multilinear $A_{\q}$ condition and consider $\nu_{\vec w}=\prod_{i=1}^m w_i^{p/q_i}$. Then, there exists a constant $C>0$ such that the following estimates hold.
\begin{enumerate}
\item[\rm (i)] If $q_i>1$ for all $1 \leq i \leq m$, then
\begin{equation}
\left\|  \left( \sum_{k_1, \dots, k_m} |T(f^1_{k_1}, \dots, f^m_{k_m})|^r \right)^{\frac{1}{r}} \right\|_{L^p(\nu_{\vec w})} \leq C \|T\| \prod_{i=1}^m \left\| \left( \sum_{k_i} |f^i_{k_i}|^{r} \right)^{\frac{1}{r}} \right\|_{L^{q_i}(w_i)}.
\end{equation}
\item[\rm (ii)] If at least one $q_i=1$, then
\begin{equation}
\left\|  \left( \sum_{k_1, \dots, k_m} |T(f^1_{k_1}, \dots, f^m_{k_m})|^r \right)^{\frac{1}{r}}  \right\|_{L^{p,\infty}(\nu_{\vec w})} \leq C e^{\frac{1}{p}} \|T\|_{weak} \prod_{i=1}^m \left\| \left( \sum_{k_i} |f^i_{k_i}|^{r} \right)^{\frac{1}{r}} \right\|_{L^{q_i}(w_i)}.
\end{equation}
\end{enumerate}
\end{corollary}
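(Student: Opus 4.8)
The plan is to deduce the corollary directly from the abstract Marcinkiewicz--Zygmund inequalities of Proposition~\ref{MZ multilineal para p,q>0} together with the weighted boundedness of multilinear Calder\'on--Zygmund operators under the multilinear $A_{\q}$ condition reviewed in Section~\ref{vector-valued CZ operators} (from \cite{LOPTT}). The crucial observation is that both parts of Proposition~\ref{MZ multilineal para p,q>0} are stated for \emph{arbitrary} $\sigma$-finite measures $\mu_i$ and $\nu$; hence I would simply specialize them to the weighted measures $d\mu_i=w_i\,dx$ and $d\nu=\nu_{\vec w}\,dx$ on $\R^n$ (these are $\sigma$-finite, as weights are locally integrable), so that $L^{q_i}(\mu_i)=L^{q_i}(w_i)$ and $L^p(\nu)=L^p(\nu_{\vec w})$.

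First I would verify that the exponents lie in the admissible range of Proposition~\ref{MZ multilineal para p,q>0}. In the case $1\le q_1,\dots,q_m<r<2$, the relation $\frac1p=\sum_{i=1}^m\frac1{q_i}>\frac mr\ge\frac1r$ forces $p<r$, so that $0<p,q_1,\dots,q_m<r<2$ as required; the case $r=2$ only demands $0<p,q_1,\dots,q_m<\infty$, which is automatic.

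For part (i), when $1<q_1,\dots,q_m<\infty$ and $\vec w$ satisfies the multilinear $A_{\q}$ condition, the $A_{\q}$ theory recalled above guarantees that $T$ extends to a bounded operator $T\colon L^{q_1}(w_1)\times\cdots\times L^{q_m}(w_m)\to L^p(\nu_{\vec w})$. Feeding this operator into Proposition~\ref{MZ multilineal para p,q>0}(i) yields precisely the stated strong estimate, with $\|T\|$ now denoting the norm between the weighted spaces. For part (ii), when at least one $q_i=1$, the same theory gives boundedness of $T$ from $L^{q_1}(w_1)\times\cdots\times L^{q_m}(w_m)$ into $L^{p,\infty}(\nu_{\vec w})$; applying the weak-type statement Proposition~\ref{MZ multilineal para p,q>0}(ii) then produces the desired weak estimate, the factor $e^{1/p}$ being inherited directly from the proposition.

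I do not expect any substantial obstacle: all the analytic content is already packaged in the two imported results, and the argument is essentially a translation between the abstract measure-theoretic formulation and the weighted setting. The only points requiring a moment's care are the elementary verification $p<r$ that places the exponents in the correct range, and checking that the weighted measures are $\sigma$-finite so that the abstract proposition applies verbatim; with those in hand the corollary follows immediately.
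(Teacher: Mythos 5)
Your proposal is correct and is essentially the paper's own argument: the paper derives Corollary~\ref{vector-valued for CZ} exactly by feeding the weighted boundedness of multilinear Calder\'on--Zygmund operators under the multilinear $A_{\q}$ condition (from \cite{LOPTT}) into Proposition~\ref{MZ multilineal para p,q>0}, specialized to the measures $d\mu_i=w_i\,dx$ and $d\nu=\nu_{\vec w}\,dx$. Your additional check that $\frac1p=\sum_i\frac1{q_i}>\frac1r$ forces $p<r$, so the exponents fall in the proposition's admissible range, is a worthwhile detail the paper leaves implicit.
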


\subsection{Marcinkiewicz-Zygmund inequalities for positive multilinear operators}\label{section MZ positive}
Following the proof of the Marcinkiewicz-Zygmund inequality for positive linear operators stated in \cite[Chapter V.1, Thm. 1.12]{GarRub}, we prove Proposition~\ref{MZ positive} which extends this result to the multilinear setting. Recall that a multilinear operator $T\colon L^{q_1}(\mu_1)\times \cdots L^{q_m}(\mu_m) \to L^p(\nu)$ is positive if $f^1, \dots, f^m \geq 0$ implies $T(f^1, \dots, f^m) \geq 0$. It can be seen that if $T$ is positive, then $|T(f^1, \dots, f^m)| \leq T(g^1, \dots, g^m)$ whenever $|f^1|\leq g^1, \dots, |f^m|\leq g^m$.

\begin{proof}[Proof of Proposition~\ref{MZ positive}]
It suffices to show that
\begin{equation}\label{suffice MZ positive}
\left(\sum_{k_1, \dots, k_m} |T(f_{k_1}^1, \dots, f_{k_m}^m)(\omega)|^r\right)^{1/r} \leq T\left( \left(\sum_{k_1=1}^{n_1} |f_{k_1}^1|^r\right)^{1/r}, \dots, \left(\sum_{k_m=1}^{n_m} |f_{k_m}^m|^r\right)^{1/r} \right)(\omega)
\end{equation}
for any choice of functions $\{f_{k_i}^i\}_{k_i=1}^{n_i} \subset L^{q_i}(\mu_i)$, $1\leq i \leq m$.
The case $r=\infty$ is immediate by the positivity of $T$. Then, we assume $1 \leq r < \infty$ and prove \eqref{suffice MZ positive} by induction on $m$. The case $m=1$ is proved in \cite[Chapter V.1, Thm. 1.12]{GarRub} via a duality argument. Now, let $m\geq 2$ and suppose \eqref{suffice MZ positive} holds for $m-1$.
By duality we know that, given $\mathbf{a}=(a_{k_1 \dots k_m})_{k_1, \dots, k_m=1}^\infty$,
$$
\|\mathbf{a}\|_ {\ell^r(\N \times \cdots \times \N)} = \sup \left| \sum_{k_1, \dots, k_m} a_{k_1 \dots k_m} b_{k_1 \dots k_m} \right|
$$
where the supremum is taken over all $\mathbf{b} = (b_{k_1 \dots k_m})_{k_1, \dots, k_m}$ such that $\|\mathbf{b}\|_{\ell^{r'}(\N \times \cdots \times \N)}\leq 1$. Take any $\mathbf{b} \in \ell^{r'}(\N \times \cdots \times \N)$ and note that
\begin{eqnarray*}
&& \left| \sum_{k_1, \dots, k_m} T(f_{k_1}^1, \dots, f_{k_m}^m)(\omega) b_{k_1 \dots k_m} \right| = \left| \sum_{k_1, \dots, k_{m-1}} T\left(f_{k_1}^1, \dots, f_{k_{m-1}}^{m-1}, \sum_{k_m} b_{k_1 \dots k_m} f_{k_m}^m\right)(\omega) \right| \\
&\leq& \sum_{k_1, \dots, k_{m-1}} \left|T\left(f_{k_1}^1, \dots, f_{k_{m-1}}^{m-1}, \sum_{k_m} b_{k_1 \dots k_m} f_{k_m}^m\right)(\omega)\right| \\
&\leq& \sum_{k_1, \dots, k_{m-1}} T\left(|f_{k_1}^1|, \dots, |f_{k_{m-1}}^{m-1}|, \left( \sum_{k_m} |f_ {k_m}^m|^r \right)^{1/r} \right)(\omega)  \left( \sum_{k_m} |b_{k_1 \dots k_m}|^{r'} \right)^{1/r'},
\end{eqnarray*}
where the last inequality follows from  H$\ddot{\text{o}}$lder's inequality and the positivity of $T$. A repeated application of H$\ddot{\text{o}}$lder's inequality and the induction hypothesis shows that
\begin{eqnarray*}
&&\sum_{k_1, \dots, k_{m-1}} T\left(|f_{k_1}^1|, \dots, |f_{k_{m-1}}^{m-1}|, \left( \sum_{k_m} |f_ {k_m}^m|^r \right)^{1/r} \right)(\omega)  \left( \sum_{k_m} |b_{k_1 \dots k_m}|^{r'} \right)^{1/r'} \\
&\leq&
\left(\sum_{k_1, \dots, k_{m-1}}  \left| T\left(|f_{k_1}^1|, \dots, |f_{k_{m-1}}^{m-1}|, \left( \sum_{k_m} |f_ {k_m}^m|^r \right)^{1/r} \right)(\omega)  \right|^r\right)^{1/r} \|\mathbf{b}\|_{\ell^{r'}(\N \times \cdots \times \N)} \\
&\leq&  T\left( \left(\sum_{k_1=1}^{n_1} |f_{k_1}^1|^r\right)^{1/r}, \dots, \left(\sum_{k_m=1}^{n_m} |f_{k_m}^m|^r\right)^{1/r} \right)(\omega) \|\mathbf{b}\|_{\ell^{r'}(\N \times \cdots \times \N)}
\end{eqnarray*}
Hence,
$$
\left| \sum_{k_1, \dots, k_m} T(f_{k_1}^1, \dots, f_{k_m}^m)(\omega) b_{k_1 \dots k_m} \right| \leq
T\left(\left( \sum_{k_{1}} |f_ {k_{1}}^{1}|^r \right)^{1/r}, \dots, \left( \sum_{k_m} |f_ {k_m}^m|^r \right)^{1/r} \right)(\omega) \|\mathbf{b}\|_{\ell^{r'}(\N \times \cdots \times \N)},
$$
and taking supremum over all $\|\mathbf{b}\|_{\ell^{r'}(\N \times \cdots \times \N)} \leq 1$ we get \eqref{suffice MZ positive}.
\end{proof}

As an immediate consequence of Proposition~\ref{MZ positive} and Young's inequality, we have the following vector-valued inequality for the convolution $f*g(x) = \int_\R f(x-y) g(y) \, dy$.
\begin{corollary}
Let $1 \leq q_1,q_2,p \leq \infty$ satisfying $\frac{1}{q_1} + \frac{1}{q_2} = \frac{1}{p} + 1$ and let $1 \leq r \leq \infty$. Then,
$$
\left\|\left(\sum_{k_1, k_2} |f_{k_1}^1 * f_{k_2}^2|^r\right)^{1/r}\right\|_{L^p(\R)} \leq  \left\| \left(\sum_{k_1=1}^{n_1} |f_{k_1}^1|^r\right)^{1/r} \right\|_{L^{q_1}(\R)} \left\| \left(\sum_{k_2=1}^{n_2} |f_{k_2}^2|^r\right)^{1/r} \right\|_{L^{q_2}(\R)}
$$
for any choice of functions $\{f_{k_i}^i\}_{k_i=1}^{n_i} \subset L^{q_i}(\R)$, $1\leq i \leq 2$.
\end{corollary}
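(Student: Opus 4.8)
The plan is to recognize the bilinear convolution as a positive bilinear operator that is bounded with norm at most one, and then invoke Proposition~\ref{MZ positive} directly with $m=2$. Define $T\colon L^{q_1}(\R) \times L^{q_2}(\R) \to L^p(\R)$ by $T(f^1,f^2) = f^1 * f^2$. The first step is to check that $T$ is well defined and bounded: this is precisely Young's convolution inequality, which under the hypothesis $\frac{1}{q_1} + \frac{1}{q_2} = \frac{1}{p}+1$ yields $\|f^1 * f^2\|_{L^p(\R)} \leq \|f^1\|_{L^{q_1}(\R)} \|f^2\|_{L^{q_2}(\R)}$, so that $\|T\| \leq 1$.

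The second step is to verify that $T$ is positive in the sense required by Proposition~\ref{MZ positive}. If $f^1, f^2 \geq 0$, then for almost every $x$ the integrand in $(f^1 * f^2)(x) = \int_\R f^1(x-y) f^2(y)\, dy$ is non-negative, whence $T(f^1, f^2) = f^1 * f^2 \geq 0$. Thus $T$ is a positive bilinear operator, and it falls within the scope of Proposition~\ref{MZ positive}, which allows the full range $0 < p, q_1, q_2 \leq \infty$ and every $1 \leq r \leq \infty$.

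With these two observations in hand, the conclusion is immediate: applying Proposition~\ref{MZ positive} with $m=2$ to the operator $T$ above gives
\begin{equation*}
\left\|\left(\sum_{k_1, k_2} |f_{k_1}^1 * f_{k_2}^2|^r\right)^{1/r}\right\|_{L^p(\R)} \leq \|T\| \prod_{i=1}^2 \left\| \left(\sum_{k_i=1}^{n_i} |f_{k_i}^i|^r\right)^{1/r} \right\|_{L^{q_i}(\R)},
\end{equation*}
and since $\|T\| \leq 1$ this is exactly the claimed estimate.

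There is no genuine obstacle here; the only points that deserve a moment's attention are the verification that Young's inequality supplies the operator norm bound $\|T\|\le 1$ across the whole admissible range of exponents (including the endpoint cases such as $q_1=1$, $q_2=\infty$, $p=\infty$), and the observation that convolution preserves non-negativity. Everything else is a direct substitution into Proposition~\ref{MZ positive}.
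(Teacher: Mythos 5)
Your proposal is correct and is exactly the paper's argument: the paper derives this corollary as an immediate consequence of Proposition~\ref{MZ positive} applied to the positive bilinear operator $T(f^1,f^2)=f^1*f^2$, whose norm is at most $1$ by Young's inequality. Your verification of positivity and of the operator norm bound simply makes explicit what the paper leaves implicit.
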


The inequality \eqref{ineq MZ positive} should be compared with \cite[Thm. 6.2]{DefMas}, where it is proved that, given a positive $m$-linear operator $T\colon X_1 \times \cdots \times X_m \to Y$ between Banach lattices and $1 \leq r_1, \dots, r_m, r \leq \infty$ such that $\frac{1}{r}=\frac{1}{r_1} + \cdots + \frac{1}{r_m}$, we have
$$
\left\|\left(\sum_{k} |T(x_{k}^1, \dots, x_{k}^m)|^r\right)^{1/r}\right\|_{Y} \leq  \|T\| \prod_{i=1}^m \left\| \left(\sum_{k=1}^{n} |x_{k}^i|^{r_i}\right)^{1/r_i} \right\|_{X_i}
$$
for any choice of sequences $\{x_{k}^i\}_{k=1}^{n} \subset X_i$, $1\leq i \leq m$. As a consequence, if $1 \leq q_1,q_2,p \leq \infty$ satisfy $\frac{1}{q_1} + \frac{1}{q_2} = \frac{1}{p} + 1$ and $1 \leq r_1, r_2, r \leq \infty$ are such that $\frac{1}{r}=\frac{1}{r_1} + \frac{1}{r_2}$, then
$$
\left\|\left(\sum_{k=1}^n |f_{k}^1 * f_{k}^2|^r\right)^{1/r}\right\|_{L^p(\R)} \leq  \left\| \left(\sum_{k=1}^{n} |f_{k}^1|^{r_1}\right)^{1/r_1} \right\|_{L^{q_1}(\R)} \left\| \left(\sum_{k=1}^{n} |f_{k}^2|^{r_2}\right)^{1/r_2} \right\|_{L^{q_2}(\R)}
$$
for any choice of functions $\{f_{k}^i\}_{k=1}^{n} \subset L^{q_i}(\R)$, $1\leq i \leq 2$.

\subsection*{Acknowledgements} The authors wish to thank C. Muscalu for his valuable comments regarding this work.

\end{document}